\definecolor{qqqqff}{rgb}{0.,0.,1.}
\newcommand\RR{\mathbb{R}}
\renewcommand{\H}{{\mathcal H}}
\renewcommand{\epsilon}{\varepsilon}
\renewcommand{\subset}{\subseteq}
\renewcommand{\supset}{\supseteq}
\newcommand{\M}{\mathcal M}
\newtheorem{theorem}{Theorem}[section]
\newtheorem{proposition}[theorem]{Proposition}
\newtheorem{conjecture}[theorem]{Conjecture}
\newtheorem{lemma}[theorem]{Lemma}
\newtheorem{corollary}[theorem]{Corollary}
\theoremstyle{definition}
\newtheorem{definition}[theorem]{Definition}
\theoremstyle{remark}
\newcommand{\defeq}{:=}
\newcommand{\E}{\boldsymbol E}
\renewcommand{\M}{\mathcal M}
\title{Minimal clusters of four planar regions with the same area}
\author[1]{E. Paolini\thanks{The authors are members of the Gruppo Nazionale per
  l'Analisi Matematica, la Probabilità e le loro Applicazioni (GNAMPA)
of the Istituto Nazionale di Alta Matematica (\mbox{INdAM}).}}
\author[2]{A. Tamagnini}
\affil[1]{Università di Pisa}
\affil[2]{Università degli Studi di Firenze}
\begin{document}
\maketitle
\begin{abstract}
  We prove that the optimal way to enclose and separate four planar regions
  with equal area using the less possible perimeter
  requires all regions to be connected. Moreover, the topology of such
  optimal clusters is uniquely determined.
\end{abstract}

\section{Introduction}\label{sec:introduction}

We consider the problem of enclosing and separating $N$ regions of
$\RR^2$ with prescribed area and with the minimal possible interface length.

The case $N=1$ corresponds to the celebrated isoperimetric
problem whose solution, the circle, was known since antiquity.

For $N\ge 1$ first existence and partial regularity in $\RR^n$ was
given by Almgren~\cite{algrem} while Taylor~\cite{JET1} describes the
singularities for minimizers in $\RR^3$. Existence and regularity
of minimizers in $\RR^2$ was proved
by Morgan~\cite{M} (see also \cite{FM}): the regions of a
minimizer in $\RR^2$ are delimited by a finite number of circular arcs
which meet in triples at their end-points  (see
Theorem~\ref{thm:existence_regularity}).

Foisy et al.~\cite{FABHZ} proved that for $N=2$ in $\RR^2$ the two
regions of any minimizer are delimited by three circular arcs joining
in two points (standard double bubble) and are uniquely determined by
their enclosed areas. Wichiramala~\cite{W} proved that for $N=3$ in $\RR^2$ the
three regions of any minimizer are delimited by six circular arcs
joining in four points. Such configuration (standard triple bubble) is
uniquely determined by the given enclosed areas, as shown by
Montesinos~\cite{MM}.
The case $N=4$ has been considered in \cite{KW} where some partial
information on minimal clusters is obtained.

The minimization problem can be stated also for $N=\infty$
regions with equal areas
(the \emph{honeycomb conjecture}, see~\cite{M2}):
Hales~\cite{H} proved that the hexagonal grid is indeed the solution.

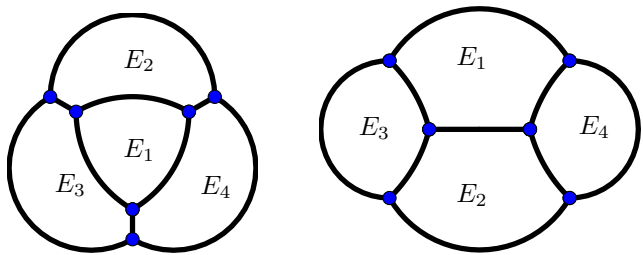
\begin{figure}
\centering
\begin{tikzpicture}[line cap=round,line join=round,>=triangle 45,x=0.5cm,y=0.5cm]
\clip(1.1752267206344569,-0.7410017593858914) rectangle (7.827913116955445,5.618356312582319);
\draw [shift={(4.5,0.401923788646684)},line width=2.pt]  plot[domain=1.047197551196598:2.0943951023931957,variable=\t]({1.*3.*cos(\t r)+0.*3.*sin(\t r)},{0.*3.*cos(\t r)+1.*3.*sin(\t r)});
\draw [shift={(6.,3.)},line width=2.pt]  plot[domain=3.141592653589793:4.188790204786391,variable=\t]({1.*3.*cos(\t r)+0.*3.*sin(\t r)},{0.*3.*cos(\t r)+1.*3.*sin(\t r)});
\draw [shift={(3.,3.)},line width=2.pt]  plot[domain=-1.0471975511965974:0.,variable=\t]({1.*3.*cos(\t r)+0.*3.*sin(\t r)},{0.*3.*cos(\t r)+1.*3.*sin(\t r)});
\draw [line width=2.pt] (3.,3.)-- (2.312320508075691,3.3970319397786835);
\draw [line width=2.pt] (6.,3.)-- (6.687679491924309,3.397031939778683);
\draw [line width=2.pt] (4.5,0.401923788646684)-- (4.5,-0.3921400909106856);
\draw [shift={(4.5,3.3970319397786835)},line width=2.pt]  plot[domain=0.:3.141592653589793,variable=\t]({1.*2.1876794919243094*cos(\t r)+0.*2.1876794919243094*sin(\t r)},{0.*2.1876794919243094*cos(\t r)+1.*2.1876794919243094*sin(\t r)});
\draw [shift={(5.593839745962157,1.5024459244339987)},line width=2.pt]  plot[domain=-2.094395102393195:1.0471975511965979,variable=\t]({1.*2.1876794919243077*cos(\t r)+0.*2.1876794919243077*sin(\t r)},{0.*2.1876794919243077*cos(\t r)+1.*2.1876794919243077*sin(\t r)});
\draw [shift={(3.4061602540378475,1.502445924433999)},line width=2.pt]  plot[domain=2.0943951023931966:5.23598775598299,variable=\t]({1.*2.1876794919243094*cos(\t r)+0.*2.1876794919243094*sin(\t r)},{0.*2.1876794919243094*cos(\t r)+1.*2.1876794919243094*sin(\t r)});
\draw (4.0146449003693405,2.5208092074169914) node[anchor=north west] {${{E_1}}$};
\draw (4.0146449003693405,4.902635201161543) node[anchor=north west] {${{E_2}}$};
\draw (2.1842761564080106,1.5821585694881042) node[anchor=north west] {${{E_3}}$};
\draw (6.056210037864671,1.5352260375916598) node[anchor=north west] {${{E_4}}$};
\draw [fill=qqqqff] (3.,3.) circle (2.5pt);
\draw [fill=qqqqff] (6.,3.) circle (2.5pt);
\draw [fill=qqqqff] (4.5,0.401923788646684) circle (2.5pt);
\draw [fill=qqqqff] (2.312320508075691,3.3970319397786835) circle (2.5pt);
\draw [fill=qqqqff] (6.,3.) circle (2.5pt);
\draw [fill=qqqqff] (6.687679491924309,3.397031939778683) circle (2.5pt);
\draw [fill=qqqqff] (4.5,0.401923788646684) circle (2.5pt);
\draw [fill=qqqqff] (4.5,-0.3921400909106856) circle (2.5pt);
\end{tikzpicture}
\qquad
\begin{tikzpicture}[line cap=round,line join=round,>=triangle 45,x=0.5cm,y=0.5cm]
\clip(9.400305521255785,-1.140201276429096) rectangle (17.942201151423856,5.419229480070645);
\draw [shift={(11.265162377631011,2.133974596215561)},line width=2.pt]  plot[domain=1.5707963267948961:4.71238898038469,variable=\t]({1.*1.8270323556782402*cos(\t r)+0.*1.8270323556782402*sin(\t r)},{0.*1.8270323556782402*cos(\t r)+1.*1.8270323556782402*sin(\t r)});
\draw [shift={(16.05483762236899,2.1339745962155607)},line width=2.pt]  plot[domain=-1.5707963267948966:1.5707963267948966,variable=\t]({1.*1.8270323556782397*cos(\t r)+0.*1.8270323556782397*sin(\t r)},{0.*1.8270323556782397*cos(\t r)+1.*1.8270323556782397*sin(\t r)});
\draw [shift={(13.66,2.578346805953621)},line width=2.pt]  plot[domain=0.5235987755982988:2.6179938779914935,variable=\t]({1.*2.7653202918803603*cos(\t r)+0.*2.7653202918803603*sin(\t r)},{0.*2.7653202918803603*cos(\t r)+1.*2.7653202918803603*sin(\t r)});
\draw [shift={(13.66,1.6896023864775023)},line width=2.pt]  plot[domain=0.5235987755982988:2.6179938779914935,variable=\t]({1.*2.7653202918803603*cos(\t r)+0.*2.7653202918803603*sin(\t r)},{0.*2.7653202918803603*cos(\t r)+-1.*2.7653202918803603*sin(\t r)});
\draw (12.80,4.581006077110232) node[anchor=north west] {${{E_1}}$};
\draw (12.80,0.922094397521127) node[anchor=north west] {${{E_2}}$};
\draw (10.20,2.7049822704845456) node[anchor=north west] {${{E_3}}$};
\draw (16.05,2.7448976706255177) node[anchor=north west] {${{E_4}}$};
\draw [line width=2.pt] (12.32,2.1339745962155607)-- (13.66,2.1339745962155616);
\draw [line width=2.pt] (13.66,2.1339745962155616)-- (15.,2.1339745962155607);
\draw [shift={(8.603304837072304,3.061787978575486)},line width=2.pt]  plot[domain=5.480621894061539:6.038551169101036,variable=\t]({1.*3.830751963598829*cos(\t r)+0.*3.830751963598829*sin(\t r)},{0.*3.830751963598829*cos(\t r)+1.*3.830751963598829*sin(\t r)});
\draw [shift={(8.603304837072285,1.2061612138556204)},line width=2.pt]  plot[domain=0.24463413807855308:0.8025634131180475,variable=\t]({1.*3.830751963598853*cos(\t r)+0.*3.830751963598853*sin(\t r)},{0.*3.830751963598853*cos(\t r)+1.*3.830751963598853*sin(\t r)});
\draw [shift={(18.71669516292769,1.206161213855631)},line width=2.pt]  plot[domain=2.339029240471742:2.8969585155112414,variable=\t]({1.*3.8307519635988254*cos(\t r)+0.*3.8307519635988254*sin(\t r)},{0.*3.8307519635988254*cos(\t r)+1.*3.8307519635988254*sin(\t r)});
\draw [shift={(18.71669516292769,3.061787978575485)},line width=2.pt]  plot[domain=3.386226791668343:3.9441560667078424,variable=\t]({1.*3.830751963598824*cos(\t r)+0.*3.830751963598824*sin(\t r)},{0.*3.830751963598824*cos(\t r)+1.*3.830751963598824*sin(\t r)});
\draw [fill=qqqqff] (12.32,2.1339745962155607) circle (2.5pt);
\draw [fill=qqqqff] (15.,2.1339745962155607) circle (2.5pt);
\draw [fill=qqqqff] (11.265162377631013,3.9610069518938014) circle (2.5pt);
\draw [fill=qqqqff] (11.265162377631011,0.3069422405373212) circle (2.5pt);
\draw [fill=qqqqff] (16.05483762236899,3.961006951893801) circle (2.5pt);
\draw [fill=qqqqff] (16.05483762236899,0.30694224053732094) circle (2.5pt);
\draw [fill=qqqqff] (11.26516237763101,3.961006951893801) circle (2.5pt);
\draw [fill=qqqqff] (12.32,2.133974596215563) circle (2.5pt);
\draw [fill=qqqqff] (16.054837622368993,3.9610069518938005) circle (2.5pt);
\draw [fill=qqqqff] (15.,2.133974596215563) circle (2.5pt);
\draw [fill=qqqqff] (16.05483762236899,0.30694224053732094) circle (2.5pt);
\draw [fill=qqqqff] (15.,2.1339745962155607) circle (2.5pt);
\end{tikzpicture}
\caption{The \emph{flower} (left hand side) and \emph{sandwich} (right
  hand side) topologies.}\label{sandwich_flower}
\end{figure}

In obtaining the results with $N=2$ or $N=3$ planar regions, the main difficulty is to prove
that each region of the minimizer is connected. In fact, in general, this is an open
question
(soap bubble conjecture, Conjecture~\ref{conj:soap_bubble}, see Morgan
and Sullivan~\cite{SM}).

To investigate such a conjecture,
in this paper (which originates from the Ph.D. Thesis~\cite{AT} of the
second author)
we consider the case of $N=4$ regions in the plane.
In Theorem~\ref{thm:connected}
we prove that if the four planar regions have equal areas then the
conjecture is true: the minimizing clusters must be connected.
However, in this case, connectedness and stationarity
is not enough to uniquely determine the topology of minimizers.
In fact there are two nontrivial possible topologies:
we call them the \emph{flower} and the
\emph{sandwich} topologies (see Figure~\ref{sandwich_flower}).
We then exclude the flower
topology, to conclude that minimizers have the sandwich
type (Theorem~\ref{thm:sandwich}).

We conjecture that the minimizer with equal areas is symmetric
i.e.: the regions $E_1$ and $E_3$ are
congruent to the regions $E_2$ and $E_4$ respectively
(see Conjecture~\ref{conj:sandwich} for more details).

The problem of dividing the sphere in regions of equal areas has also
been considered. See for example \cite{E} where it is proven that the
minimizer for four equal areas in the sphere is given by a geodesic tetrahedron.

The plan of the paper is as follows. In Section~\ref{sec:notations} we
set up the notation and collect the known results that we need in the
rest of the paper.
In Section~\ref{sec:variations} we present some
tools which apply to general planar clusters.
In particular notice that Proposition~\ref{prop:varII} gives an estimate by
below on the measure of each connected component of a minimal
cluster. This estimate can be used to obtain an upper bound on the
total number of connected components of a cluster as in
Theorem~\ref{thm:lower_estimate}.

In
Section~\ref{sec:1111} we start the analysis of planar clusters with
four equal areas. In particular we find a precise estimate on the
length of the minimizers (Proposition~\ref{prop:k_0}), we prove
that possible components of a disconnected region cannot be too small
(Proposition~\ref{prop:k_2}) and cannot be too big
(Proposition~\ref{prop:k_1}). This estimates enable us to prove that
a minimizer can have at most six connected components
(Proposition~\ref{prop:two_small}).
In Section~\ref{sec:six} we exclude the clusters with six
components. In Section~\ref{sec:five} we exclude the clusters with
five components and obtain the connectedness result
Theorem~\ref{thm:connected}.
In Section~\ref{sec:four} we consider all connected
clusters (four components) and exclude the flower
topology (Proposition~\ref{prop:no_flower}, Theorem~\ref{thm:sandwich}).

\section{Notation and preliminary results}\label{sec:notations}

Let us denote with $\E = (E_1, \dots, E_N)$ an $N$-uple of
measurable subsets of $\mathbb R^2$. We will say that $\E$ is an
\emph{$N$-cluster} if $m(E_i \cap E_j)=0$ for all $i\neq j$
($m(\cdot)$ is the Lebesgue measure).
The \emph{external region} $E_0$ is defined as
\[
   E_0 = \RR^2 \setminus \bigcup_{i=1}^N E_i.
\]
The sets $E_0, E_1, \dots, E_N$ will be called the \emph{regions} of the
cluster $\E$.

We define the \emph{measure} and the $\emph{perimeter}$ of a cluster by:
\[
  \boldsymbol m(\E) \defeq (m(E_1),\dots,m(E_N)), \qquad
  P(\E) \defeq \frac 1 2 \sum_{i=0}^N P(E_i)
\]
where $P(E_i)$ is the \emph{perimeter} of the measurable set
$E_i$. For regular sets $E_i$ one has
$P(E_i) = \H^{1}(\partial E_i)$
which is the length of the
boundary of $E_i$.

Given a measurable set $E$ we say that $C$ with $m(C)>0$ is a \emph{component} of $E$ if
\[
m(E) = m(C) + m(E\setminus C)
\quad\text{and}\quad
P(E) = P(C) + P(E\setminus C)
\]
(i.e.\ the decomposition $E = C \cup (E\setminus C)$ does not add
any boundary). We say that $E$ is \emph{connected} if it has no
component $C$ with $0 < m(C) < m(E)$ ($C=E$ is a
trivial component).
Notice that in our definitions a \emph{component} does not need to be
connected: in general a component can be a union of connected components.
We say that a cluster $\E$ is
\emph{connected} if each region $E_i$, for $i=1,\dots, N$, is connected.
We say that a cluster is \emph{disconnected} if it is not connected
(i.e.\ at least one region is not connected).

A component $C$ of a region $E_i$ of the cluster $\E$ (with $i\neq 0$) is said to be \emph{external} if is adjacent to the external region $E_0$ (formally $P(C\cup E_0) < P(C)+P(E_0)$) otherwise it is said to be \emph{internal}.

Given a vector of positive numbers $\boldsymbol a \in \RR_+^N$, $\boldsymbol a
= (a_1, \dots, a_N)$, $a_i>0$ we will define the family of
\emph{competitors} as the clusters with measure $\boldsymbol a$:
\[
\mathcal C(\boldsymbol a) = \{ \E \colon \boldsymbol m(\E) =
\boldsymbol a\}
\]
among these we will consider the following optimization problem:
\[
  p(\boldsymbol a) = \inf \{P(\E)\colon \E \in \mathcal C(\boldsymbol a)\}
\]
and the corresponding minimizers:
\[
\M(\boldsymbol a) = \{\E\in\mathcal C(\boldsymbol a) \colon P(\E) = p(\boldsymbol a)\}.
\]

We will also consider the \emph{weak} variants of this minimization
problem:
\begin{align*}
\mathcal C^*(\boldsymbol a) &= \{ \E \colon \boldsymbol m(\E) \ge
\boldsymbol a\}\\
  p^*(\boldsymbol a) &= \inf \{P(\E)\colon \E \in \mathcal C^*(\boldsymbol a)\}\\
  \M^*(\boldsymbol a) &= \{\E\in \mathcal C^*(\boldsymbol a) \colon P(\E) = p^*(\boldsymbol a)\}.
\end{align*}
(the comparison between vectors of $\RR^N$ is understood componentwise).

\begin{definition}[regular cluster]\label{def:regular}
  We say that a planar $N$-cluster $\E$ is
  \emph{regular} when:
  \begin{enumerate}
  \item
    each region (including the external region $E_0$) is (up to a
    negligible set) a closed set which is equal to
    the closure of its interior points (and in the following we will assume that the
    Lebesgue representant of the regions $E_i$ is always a closed set);
  \item
    each region, but the external one $E_0$, is bounded;
  \item
    the boundary of the cluster, defined by
    \[
    \partial \E = \bigcup_{k=1}^N \partial E_k
    \]
    is the continuous embedding of a finite planar graph
    (i.e.\ there are a finite number of simple continuous curves which we will call
    \emph{edges} which can only meet in their end-points which we
    will call \emph{vertices} and the \emph{faces} of the graph
    correspond to the connected components of the regions);
  \item
    each \emph{vertex} has order at least three (i.e.\ it coincides
    with at least three end-points of the edges).
  \end{enumerate}

  Notice that the perimeter of a region $E_i$ of a regular cluster
  $\E$ is the
  sum of the length of the edges of $E_i$. Moreover since each edge
  belongs to the boundary of exactly two regions, we have
  \[
  P(\E) = \frac 1 2 \sum_{k=0}^N P(E_k) =
  \sum_{\text{$\sigma$ edge of $\E$}} \ell(\sigma)
  = \mathcal H^1(\partial \E).
  \]
\end{definition}

\begin{definition}[stationary cluster]\label{def:stationary}
  We say that a regular planar cluster $\E = (E_1, \dots, E_N)$ is \emph{stationary} if it
  satisfies the following conditions:
  \begin{enumerate}
  \item
    %% $\E$ is composed by a finite number of circular arcs of circle or a straight segments (which,
    %% in the following, we will identify with arcs of zero curvature);
    every edge
    %\footnote{Recall that since $\E$ is supposed to be
    %  regular we have a finite number of edges}
    is either a circular
    arc or a straight segment (which,
    in the following, we will identify with an arc of zero curvature);
  \item
    in every \emph{vertex} exactly three arcs meet, defining three
    equal angles of 120 degrees;
  \item
    it is possible to associate a real number $p_i$ (which we will
    call \emph{pressure}) to each region
    $E_i$ of the cluster, so that $p_0 = 0$ and such that any arc between the regions $E_i$ and $E_j$ has curvature
    $\lvert p_i - p_j \rvert$
    (it is a straight segment if $p_i=p_j$) and the region with higher pressure
    is towards the side where the the arc is convex.
  \end{enumerate}

  In particular it follows that the sum of the signed curvatures of
  the three arcs meeting in a vertex is always zero.
\end{definition}

\begin{theorem}[existence and regularity]\label{thm:existence_regularity}\cite{M, FM}
  Given $\boldsymbol a \in \RR_+^N$ the family of clusters $\M(\boldsymbol a)$
  is not empty and every minimal cluster $\E \in
  \M(\boldsymbol a)$ is regular and stationary.
\end{theorem}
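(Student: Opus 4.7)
The plan splits into two parts: establishing existence via the direct method of the calculus of variations, and then extracting the regularity and stationarity information from minimality.

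\medskip

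For existence, I would work in the framework of sets of finite perimeter. Take a minimizing sequence $\E^n \in \mathcal C(\boldsymbol a)$; the main technical issue is loss of mass at infinity since the ambient space is all of $\RR^2$. A concentration argument handles this: exploiting translation invariance, connected pieces of $\E^n$ that drift apart can be translated independently and repacked into a fixed bounded ball without changing the perimeter, so one may assume all $\E^n$ are supported in some $B_R$. Compactness of $BV$ functions then yields an $L^1$ limit $\E$, and the lower semicontinuity of the perimeter functional on each region, together with the area constraint passing to the limit, gives $P(\E)\le \liminf_n P(\E^n) = p(\boldsymbol a)$, proving $\M(\boldsymbol a)\neq\emptyset$.

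\medskip

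For regularity, I would follow the Almgren--Morgan strategy. Minimality upgrades to an $(\omega,r_0)$-almost-minimality of each individual region: a local perturbation at scale $r$ can be area-compensated by a distant modification at cost $O(r^2)$. The De Giorgi--Tamanini theory then yields that the reduced boundary of each $E_i$ is $C^{1,\alpha}$ off a closed singular set $\Sigma$. The first variation along area-preserving deformations produces a Lagrange multiplier $p_i$ per region (with $p_0=0$) and forces the arc between $E_i$ and $E_j$ to have constant signed curvature $p_i-p_j$, convex on the side of higher pressure. In the plane this means a circular arc or a straight segment, giving parts (1) and (3) of Definition~\ref{def:stationary}.

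\medskip

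The main obstacle is the analysis of the singular set $\Sigma$, which must turn out to be a finite set of points at which exactly three arcs meet at $120^\circ$. One blows up $\E$ at a point of $\Sigma$ and obtains, as a limit, a minimizing conical cluster, i.e.\ a partition of $\RR^2$ by half-lines emanating from the origin. Classifying such planar minimizing cones (an adaptation of Taylor's analysis in $\RR^3$) should show that only triple junctions at $120^\circ$ are admissible: a junction of four or more half-lines admits a Steiner-type perturbation that strictly decreases total length by splitting it into two triple points, while a triple with angles other than $120^\circ$ can be infinitesimally moved to decrease length. This classification forces $\Sigma$ to be discrete, and the global bound on $P(\E)$ together with boundedness of each non-external region then yields the finite planar graph structure of Definition~\ref{def:regular}. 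The zero-sum-of-signed-curvatures identity at each vertex is finally automatic from the additive relation $p_i-p_j=(p_i-p_k)+(p_k-p_j)$ combined with the equal-angle condition.
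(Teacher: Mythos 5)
This theorem is stated in the paper without proof: it is quoted directly from Morgan~\cite{M} and Maggi~\cite{FM}, so there is no internal argument to compare against, and your outline (direct method with repacking of a minimizing sequence into a bounded ball, almost-minimality plus De Giorgi--Tamanini regularity, Lagrange multipliers forcing constant-curvature arcs, and blow-up classification of planar length-minimizing cones yielding only $120^\circ$ triple junctions) is exactly the standard strategy of those references and is correct as a plan. The one step you state too casually is the passage to a \emph{finite} planar graph: a global bound on $P(\E)$ together with boundedness of the regions does not by itself exclude infinitely many short edges or an accumulation of vertices; in the cited proofs this requires quantitative lower bounds (lower density estimates at boundary points, or equivalently a uniform lower bound on the length of an edge and on the area of a component), which your sketch should acknowledge rather than treat as automatic. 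A second, more minor, looseness is in the repacking argument for existence, where translating ``connected pieces'' independently must be done so that the translated pieces remain disjoint and the total perimeter does not increase; the standard argument cuts along circles crossed by little boundary and places the pieces far apart inside a controlled ball.
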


\begin{theorem}[existence and regularity, weak case]\label{thm:weak_existence_regularity}
  \cite{M}
  Given $\boldsymbol a \in \RR_+^N$ the family of clusters $\M^*(\boldsymbol a)$
  is not empty  and every minimal cluster $\E \in
  \M^*(\boldsymbol a)$ is regular and stationary.
\end{theorem}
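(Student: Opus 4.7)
My plan is to reduce the weak problem to the strong problem, since Theorem~\ref{thm:existence_regularity} then supplies regularity and stationarity for free. The key observation is that any $\E \in \M^*(\boldsymbol a)$ is automatically a strong minimizer for its own measure vector $\boldsymbol b := \boldsymbol m(\E) \ge \boldsymbol a$: every $\F \in \mathcal C(\boldsymbol b)$ satisfies $\boldsymbol m(\F) = \boldsymbol b \ge \boldsymbol a$, hence $\F \in \mathcal C^*(\boldsymbol a)$ and $P(\F) \ge p^*(\boldsymbol a) = P(\E)$. Therefore $\E \in \M(\boldsymbol b)$ and Theorem~\ref{thm:existence_regularity} immediately yields that $\E$ is regular and stationary. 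The only non-trivial point is thus the existence of a weak minimizer.

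For existence I take a minimizing sequence $\E^n = (E_1^n,\dots,E_N^n) \in \mathcal C^*(\boldsymbol a)$ with $P(\E^n) \to p^*(\boldsymbol a)$. Using a union of far-apart disks as a strong competitor gives $p^*(\boldsymbol a) \le p(\boldsymbol a) < \infty$, so the perimeters are bounded by some constant $M$. Applying the planar isoperimetric inequality to each region yields
\[
  m(E_i^n) \le \frac{P(E_i^n)^2}{4\pi} \le \frac{M^2}{\pi},
\]
so the measures are uniformly bounded as well. Standard $\mathrm{BV}$-compactness then produces, up to a subsequence, $\chi_{E_i^n}\to \chi_{E_i^*}$ in $L^1_{\mathrm{loc}}$ for each $i$, and the lower semicontinuity of the perimeter gives $P(\E^*) \le p^*(\boldsymbol a)$.

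The main obstacle is the translation invariance of the problem: connected pieces of $E_i^n$ may drift to infinity, so a priori only $\boldsymbol m(\E^*) \le \boldsymbol a$. Here the weak constraint $\boldsymbol m(\E) \ge \boldsymbol a$ is crucial. I would handle this by a concentration-compactness scheme: each $E_i^n$ decomposes into a ``concentrating'' part, whose limit is already captured by $E_i^*$, together with finitely many ``drifting'' pieces, each of which converges after a suitable translation to some further piece. Placing all these limit pieces at large mutual distance produces a cluster $\widetilde{\E}$ with $\boldsymbol m(\widetilde{\E}) \ge \boldsymbol a$ and, because no new interface is created when pieces are far apart, $P(\widetilde{\E}) \le p^*(\boldsymbol a)$. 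Hence $\widetilde{\E} \in \M^*(\boldsymbol a)$, and combined with the reduction of the first paragraph this concludes the proof. The technical implementation of the concentration-compactness step is essentially the one in Morgan~\cite{M} for the strong case, which we would cite rather than reproduce.
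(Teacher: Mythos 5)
The paper does not contain a proof of this theorem; it is stated as a citation to Morgan~\cite{M}, so there is no ``paper's proof'' to compare against. On the merits, your reduction of regularity and stationarity to the strong case is correct and is the natural shortcut: setting $\boldsymbol b := \boldsymbol m(\E)$, any $\F \in \mathcal C(\boldsymbol b)$ satisfies $\boldsymbol m(\F) = \boldsymbol b \ge \boldsymbol a$, so $\F\in\mathcal C^*(\boldsymbol a)$ and $P(\F)\ge p^*(\boldsymbol a)=P(\E)$, whence $\E\in\M(\boldsymbol b)$ and Theorem~\ref{thm:existence_regularity} gives regularity and stationarity directly, without re-running a first-variation analysis on weak minimizers. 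For existence, your concentration-compactness outline is sound; the one point you leave implicit and should make explicit is why no mass can \emph{vanish}, i.e.\ be lost by spreading into more and more tiny pieces drifting to infinity. If every connected piece of $E_i^n$ had measure at most $\eps_n\to 0$ while the total stayed at least $a_i$, then applying the isoperimetric inequality piece by piece would give $P(E_i^n)\ge 2\sqrt{\pi}\,\lfloor a_i/\eps_n\rfloor\sqrt{\eps_n}\to\infty$, contradicting the uniform perimeter bound from your fixed competitor. With that observation in place, the dichotomy reduces to splitting into finitely many drifting components, and placing their translated limits far apart yields a competitor in $\mathcal C^*(\boldsymbol a)$ of perimeter at most $p^*(\boldsymbol a)$ by lower semicontinuity, completing the existence step. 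Your proposal is therefore correct; it packages the regularity half more cleanly than a direct re-derivation would, while the existence half is the standard argument and is reasonably deferred to \cite{M}.
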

%
%% \begin{proof}[Sketch of proof.]
%%   The existence part of this proof can be obtained in exactly the same
%%   way as it is done for strong minimizers. In fact the only
%%   requirement on the constraint $\boldsymbol m(\E)=\boldsymbol a$ is
%%   the continuity with respect to the $L^1$ convergence of $\E$. This
%%   property is satisfied as well by the constraint $\boldsymbol m(\E)
%%   \ge \boldsymbol a$. So $\M^*(\boldsymbol a)\neq \emptyset$.

%%   Now notice that given $\E\in \M^*(\boldsymbol a)$ we
%%   have $\E\in\M(\boldsymbol a^*)$ with
%%   $\boldsymbol a^* = \boldsymbol m(\E)$.
%%   Hence weak minimizers have all the regularity
%%   properties that strong minimizers have.
%% \end{proof}

Weak minimizers have some additional
properties which makes them a better ambient space for our investigation.

\begin{proposition}[properties of weak minimizers]~\cite{W}\label{prop:weak_minimizers}
  Let $\E \in \M^*(\boldsymbol a)$, $\boldsymbol a \in \RR_+^N$.
  Then:
  \begin{enumerate}
  \item the external region $E_0$ is connected;
  \item all the pressures $p_i$ are nonnegative;
  \item if $m(E_i) > a_i$ then $p_i=0$.
  \end{enumerate}
\end{proposition}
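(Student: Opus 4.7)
The plan is to prove the three assertions in turn by constructing, in each case, an admissible competitor in $\mathcal C^*(\boldsymbol a)$ with strictly smaller perimeter, contradicting the minimality of $\E$.

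For (1), suppose toward contradiction that $E_0$ has a bounded connected component $C$. Since $C$ is bounded, it is adjacent to some $E_i$ with $i\ge 1$. Let $\E'$ be obtained from $\E$ by replacing $E_i$ with $E_i\cup C$, leaving the other regions unchanged. Then $\boldsymbol m(\E')\ge \boldsymbol m(\E)\ge \boldsymbol a$, so $\E'\in\mathcal C^*(\boldsymbol a)$, while the arcs separating $C$ from $E_i$ become interior to the enlarged region and disappear from the cluster boundary, giving $P(\E')<P(\E)$, a contradiction.

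For (2) and (3), the key tool is the first-variation formula for a stationary cluster: pushing a boundary arc between $E_i$ and $E_j$ so as to enlarge $E_i$ by an infinitesimal area $\delta$ changes $P$ to first order by $(p_i-p_j)\delta$. By (1) the region $E_0$ is connected, and the adjacency graph of the regions of a planar cluster is always connected (from any $E_i$ one reaches $E_0$ along a generic path to infinity, picking up a sequence of pairwise adjacent regions). Thus for every $i\in\{1,\dots,N\}$ there is a chain of pairwise adjacent regions $E_i=E_{k_0},E_{k_1},\dots,E_{k_\ell}=E_0$. Performing at each step $j$ a small transfer of area $\delta$ enlarging $E_{k_j}$ at the expense of $E_{k_{j-1}}$ on disjoint arcs of the common boundary, all intermediate area changes cancel, so only $m(E_i)$ decreases by $\delta$ and $m(E_0)$ increases by $\delta$, while the total first-order perimeter change telescopes to $\sum_{j=1}^{\ell}(p_{k_j}-p_{k_{j-1}})\delta=(p_0-p_i)\delta=-p_i\delta$.

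For (2), performing the same chain in the reverse direction yields a perturbation that increases $m(E_i)$ by $\delta$ and decreases $m(E_0)$ by $\delta$ at first-order cost $+p_i\delta$; this is automatically admissible (the weak constraint $m(E_i)\ge a_i$ is preserved and $E_0$ is unconstrained), so minimality forces $p_i\ge 0$. For (3), the strict inequality $m(E_i)>a_i$ lets us pick $\delta<m(E_i)-a_i$, making the forward chain admissible, and minimality yields $-p_i\delta\ge 0$, that is $p_i\le 0$; combined with (2), $p_i=0$.

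The main technical obstacle is to realize these formal chains of boundary pushes as genuine admissible perturbations whose perimeter is strictly less than $P(\E)$, rather than merely formal first-order calculations. This requires supporting the $\ell$ transfers on disjoint boundary arcs, handling the trivalent vertices (where a normal push must be smoothed so as to preserve regularity), and controlling the quadratic remainder in $\delta$ uniformly across the transfers. All of these steps are standard within the first-variation machinery for planar clusters as developed in~\cite{M,W}.
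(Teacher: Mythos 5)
This proposition is imported by the paper from Wichiramala's thesis \cite{W} without a proof, so there is no in-paper argument to compare against; your proposal must be judged on its own. Your argument is correct and follows what is the standard route in the literature.

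For item (1) the competitor construction is sound: if $C$ is a bounded connected component of $E_0$ (and one exists, since $E_0$ has a unique unbounded component because $\bigcup_{j\ge 1}E_j$ is bounded), then $C$ is separated by edges of the graph only from bounded regions, and absorbing $C$ into an adjacent $E_i$ deletes the shared edges while leaving the rest of $\partial\E$ untouched, so $P(\E')=P(\E)-\ell<P(\E)$ with $\boldsymbol m(\E')\ge\boldsymbol m(\E)$. For items (2) and (3) your chain-of-adjacencies transfer is exactly the concrete realization of the Karush--Kuhn--Tucker interpretation: the $p_i$ are the Lagrange multipliers of the inequality constraints $m(E_i)\ge a_i$, so nonnegativity and complementary slackness are the expected conclusions, and the telescoping $\sum_j(p_{k_j}-p_{k_{j-1}})\delta=-p_i\delta$ is the right first-order bookkeeping. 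You correctly flag the one genuinely technical point: turning the formal chain of normal pushes into an honest one-parameter family of competitors with controlled second-order error, disjoint supports, and regularized behavior near triple points. This is indeed the delicate part, but it is part of the standard first-variation toolkit for regular planar clusters (see \cite{M,W}) and is not a gap in the logic. One small remark that would tighten the write-up: in step (1) it is worth saying explicitly that every edge of the graph separates two distinct regions, so a bounded component of $E_0$ necessarily has positive-length common boundary with some $E_i$, $i\ge 1$; this is what guarantees $\ell>0$ and hence a strict decrease in perimeter.
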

%
%% \begin{proof}[Sketch of proof.]
%%   Suppose that $C$ is a bounded connected component of $E_0$. Then
%%   consider any other region which shares and edge with $C$. Suppose
%%   such a region is $E_1$. Define $E_1' = E_1 \cup C$ and let
%%   $\E'=(E_1',E_2, \dots, E_N)$. Clearly $P(\E') < P(\E)$ because the
%%   shared edge is cancelled and, moreover, $\E'\in\mathcal C^*(\boldsymbol a)$
%%   since $\boldsymbol m(\E') \ge \boldsymbol m(\E) \ge \boldsymbol a$, so
%%   $P(\E') \geq P(\E)$. This is a contradiction.

%%   We briefly recall that pressures $p_i$ represent the Lagrange
%%   multipliers of the constraint $\boldsymbol m(\E) = \boldsymbol a$. In fact,
%%   if we have a one parameter deformation $\boldsymbol E(t)$ of $\E = \E(0)$,
%%   one has
%%   \begin{equation}\label{eq:var_formula}
%%   \left.\frac{d P(\E(t))}{dt}\right|_{t=0}
%%   = \sum_{i=1}^N p_i \left.\frac{d}{dt} m(E_i(t))\right|_{t=0}.
%%   \end{equation}

%%   Suppose the region $E_i$ is the region with lower pressure among all
%%   regions (including the external one).
%%   If there is a region with negative pressure then $i>0$
%%   (recall that the external region has pressure zero).
%%   Then there exists
%%   variation which enlarges the measure of such a region and decreases
%%   perimeter. This contradicts weak minimality.

%%   Similarly, if $m(E_i) > a_i$ there exists a small variation of the
%%   cluster $\E$ which decreases the measure of $E_i$ while keeping
%%   fixed the measure of all other regions. This variation cannot
%%   decrease the perimeter of the cluster, hence we find that
%%   $p_i = 0$.

%% \end{proof}

\begin{theorem}[pressure formula]~\cite{CHHKLMT}\label{thm:pressure_formula}
  Let $\E \in \M^*(\boldsymbol a)$ with $\boldsymbol a \in \RR_+^N$. Then
   \[
  P(\E) = 2 \sum_{i=1}^N p_i m(E_i).
  \]
\end{theorem}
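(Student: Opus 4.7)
My plan is to feed the radial dilation vector field into the first variation identity for stationary clusters. The first step is to record, or derive, the general first variation identity: for any stationary cluster $\E$ with pressures $p_i$ and any compactly supported smooth vector field $X$ on $\RR^2$ with flow $\phi_t$,
\[
\left.\frac{d}{dt}\right|_{t=0} P(\phi_t(\E)) = \sum_{i=1}^N p_i \int_{E_i} \div X\, dx.
\]
This identity expresses that the pressures act as Lagrange multipliers for the area constraints. Integrating the tangential divergence of $X$ edge by edge produces, on each circular arc $\sigma$ separating $E_i$ from $E_j$, a curvature contribution proportional to $p_i-p_j$ together with two endpoint terms; at every triple point the three concurrent unit conormals sum to zero by the $120^\circ$ condition, so the endpoint terms cancel in triples. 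Regrouping by region and applying the divergence theorem yields the displayed identity, with $p_0=0$ making the external region drop out of the sum.

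Next, I would apply the identity to the radial vector field, appropriately truncated. Since $\E$ is regular, every bounded region $E_i$ (and hence $\partial\E$) is contained in some ball $B_R$; pick $\eta\in C^\infty_c(\RR^2)$ with $\eta\equiv 1$ on $B_R$ and set $X(x)=\eta(x)x$. On each $E_i$ we have $X=x$, so $\int_{E_i}\div X\, dx = 2m(E_i)$. In a neighborhood of $\partial\E$ the field $X$ coincides with $x$, whose flow is the Euclidean dilation $x\mapsto e^t x$; under this dilation the whole cluster rescales by the factor $e^t$, so $P(\phi_t(\E)) = e^t P(\E)$ and the left-hand side of the first variation identity equals $P(\E)$. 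Substituting the two computations gives
\[
P(\E) = 2 \sum_{i=1}^N p_i\, m(E_i),
\]
which is the claimed formula.

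The main obstacle is the derivation of the stationary first variation identity itself: it requires careful sign and orientation bookkeeping across each edge so that the contributions of the two adjacent regions pair up correctly, together with the verification that the three endpoint boundary terms at every vertex cancel in triples thanks to the $120^\circ$ condition. Once this identity is available, the rest of the proof is essentially a one-line scaling computation, and the statement can be imported directly from \cite{CHHKLMT}.
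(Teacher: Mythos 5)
The paper imports this identity from~\cite{CHHKLMT} without proof, so there is no in-paper argument to compare against. Your proof --- deriving the first-variation identity $\left.\frac{d}{dt}\right|_{t=0} P(\phi_t(\E)) = \sum_{i} p_i \int_{E_i}\div X\, dx$ from the structural stationarity conditions (arcs of curvature $p_i-p_j$, $120^\circ$ junctions, $p_0=0$) and then applying it to the truncated radial dilation field, whose flow rescales the cluster so that the left side differentiates to $P(\E)$ and each $\int_{E_i}\div X$ equals $2m(E_i)$ --- is the standard scaling argument and is correct. The tangential integration by parts on each arc, the cancellation of vertex boundary terms because the three outgoing unit tangents at a $120^\circ$ junction sum to zero, and the regrouping of curvature contributions region by region are all sound; the only delicate step, which you rightly flag, is keeping orientation and sign conventions consistent across each edge, but that is bookkeeping rather than a gap.
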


%% \begin{proof}[Sketch of proof.]
%%   Let $\E(t) = (t+1)\E$ for $t\in [0,1]$ so that
%%   \[
%%   \boldsymbol m(\E(t)) = (t+1)^2 \boldsymbol m(\E), \qquad
%%   P(\E(t)) = (t+1) P(\E).
%%   \]
%%   The result follows by equation~\eqref{eq:var_formula}.
%% \end{proof}

\begin{lemma}[turning angle]~\cite{W}\label{lem:turning_angle}
  Let $\E\in\M^*(\boldsymbol a)$, $\boldsymbol a\in \RR_+^N$ and let $C$ be a connected
  component of some region $E_i$ of $\E$. Let $n$ be the number of
  edges of $C$ and let $L_j$ be the total length of the edges of $C$
  in common with the region $E_j$ ($L_j=0$ if $C$ and $E_j$ have not edges in common). Then, if $i\neq 0$, it holds
  \[
  \frac{(6-n)\pi}{3} = \sum_{j=0}^N (p_i - p_j) L_j
  \]
  where $p_j$ is the pressure of the region $E_j$.
  For $i=0$ we have instead
  \[
  \frac{(6+n)\pi}{3} = \sum_{j=1}^N (p_j - p_0) L_j.
  \]
\end{lemma}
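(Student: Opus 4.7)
The plan is to apply the rotation-index theorem (equivalently, planar Gauss--Bonnet) to $\partial C$. Orient $\partial C$ so that $C$ lies on the left; then the total change of argument of the unit tangent vector as we traverse $\partial C$ equals $2\pi$ times the rotation index, and decomposes as the integrated signed geodesic curvature along the smooth arcs plus the sum of the exterior turning angles at the vertices.

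Two inputs from stationarity (Definition~\ref{def:stationary}) identify both contributions. At each vertex three arcs meet at $120^\circ$, so the interior angle of $C$ equals $2\pi/3$ and the exterior turning angle equals $\pi/3$; since edges and vertices alternate along each closed boundary loop, the total vertex contribution is $n\pi/3$. On each edge between $C\subset E_i$ and a region $E_j$, the arc is circular of curvature $|p_i-p_j|$ and is convex on the higher-pressure side. With $C$ on the left, positive turning corresponds to the center of curvature lying in $C$, which happens exactly when $p_i>p_j$; the signed geodesic curvature along such an arc therefore equals $p_i-p_j$, and integration over the total length $L_j$ yields the contribution $(p_i-p_j)L_j$.

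For $i\neq 0$, the bounded component $C$ is simply connected, so $\partial C$ is a simple loop of rotation index $+1$ and total turning $2\pi$; the two contributions then sum to
\[
\sum_{j=0}^N (p_i-p_j)L_j + \frac{n\pi}{3} = 2\pi,
\]
which rearranges to the first formula. For $i=0$, Proposition~\ref{prop:weak_minimizers} gives that $E_0$ is connected, so its boundary, traversed with $E_0$ on the left, is walked clockwise around the (single) bounded cluster: the rotation index equals $-1$ and the total turning is $-2\pi$. The same sign convention now yields signed curvature $p_0-p_j=-p_j$ along each edge, hence
\[
-\sum_{j=1}^N p_j L_j + \frac{n\pi}{3} = -2\pi,
\]
which is equivalent to the second formula upon using $p_0=0$. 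The delicate point is topological rather than computational: one must justify that every interior component is simply connected and that $\partial E_0$ consists of a single loop (otherwise a factor $\chi(C)$ would appear on the right-hand side); both facts follow from the connectedness of $E_0$ stated in Proposition~\ref{prop:weak_minimizers}, together with a standard comparison argument excluding holes in a weak minimizer.
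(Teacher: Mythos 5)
The paper gives no proof of this lemma (it is quoted from Wichiramala's thesis \cite{W}), and your rotation-index/Gauss--Bonnet argument is exactly the standard derivation behind it: exterior angles of $\pi/3$ at each of the $n$ vertices plus signed geodesic curvature $p_i-p_j$ integrated over $L_j$, summing to $\pm 2\pi$. The computation and sign conventions are correct. One small correction on the attribution of the topological inputs at the end: simple connectedness of every bounded component is exactly item~1 of Proposition~\ref{prop:edges}, so you may just cite it; and for the $i=0$ case, connectedness of $E_0$ only rules out holes in the support of the cluster (i.e.\ gives its simple connectedness), whereas the fact that $\partial E_0$ is a \emph{single} loop requires connectedness of the support $\bigcup_{i\ge 1}E_i$, which is the part needing the translation/comparison argument (and is implicit in the Euler-formula counts of Proposition~\ref{prop:edges}(5)). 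With that reference fixed, the proof is complete and coincides with the cited one.
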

%
%% \begin{proof}
%%   Consider the external normal vector along the
%%   component $C$.

%%   In the case $i\neq 0$ recall
%%   that  $C$ is simply connected, hence by making a round trip around
%%   the component, the normal vector will turn by an angle $\pi/3$ in
%%   each vertex (since the internal angle between two edges is $2\pi/3$)
%%   and will make a turn of an angle $L/R$ along each edge of length $L$
%%   and radius $R$. Remember now that $R$ is the inverse of the
%%   curvature, and the curvature is equal to the difference of pressure
%%   between the two adjacent regions. Hence the curvature of each edge
%%   between the component $C$ and adjacent component of the region $E_j$
%%   is given by $p_i - p_j$.
%%   So, a complete turn of the normal vector will be given by:
%%   \[
%%   2\pi
%%   = n \frac {\pi} 3  + \sum_j \frac{L_j}{R_j}
%%   = n \frac {\pi} 3  + \sum_j (p_i -p_j)L_j
%%   \]
%%   and the result follows.

%%   In the case $i=0$ we can make the same reasoning with the
%%   complementary of $E_0$, but now notice that internal angles have
%%   amplitude $4\pi/3$, so the normal vector will turn by an angle $-\pi/3$ in
%%   each vertex. The result follows.
%% \end{proof}

\begin{proposition}~\cite{B}~\cite{Btre}~\cite{AT}\label{prop:edges}
  Let $\E \in \M^*(\boldsymbol a)$ with $\boldsymbol a \in \RR_+^N$. Let $M$
  be the total number of bounded connected components of the regions of $\E$.
  \begin{enumerate}
    \item Every bounded connected component is simply connected.
    \item Two connected components of $\E$ cannot share more than a single edge.
    \item If $N>2$ then each connected component $C$ of $\E$ has at
      least three edges.
    \item Each connected component of a region with $k$ connected
      components has at most $M+1-k$
      edges, and if it is internal it has at most $M-k$ edges.
    \item The total number of edges is $3(M-1)$ and the total number
      of vertices is $2(M-1)$.
    \item If $M\le 6$ then $\E\in \M(\boldsymbol a)$ (i.e.~$\boldsymbol E$ is a
      strong minimizer).
  \end{enumerate}
\end{proposition}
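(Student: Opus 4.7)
The plan is to handle the six items in a natural order: the combinatorial identities~(5) and~(4) first, then the topological facts~(1) and~(2), and finally the adjacency and strong-minimality statements~(3) and~(6). The main ingredients are Euler's formula, the connectedness of $E_0$ and the pressure information from Proposition~\ref{prop:weak_minimizers}, Lemma~\ref{lem:turning_angle}, and short variational perturbations that exploit the inequality $\boldsymbol m(\E)\ge\boldsymbol a$ available in $\mathcal C^*(\boldsymbol a)$.

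For item~(5), I note that $\partial\E$ is a 3-valent finite planar graph whose faces are exactly the $M$ bounded components together with the unbounded face $E_0$, so $F=M+1$. The relation $3V=2E$ combined with $V-E+F=2$ immediately yields $E=3(M-1)$ and $V=2(M-1)$. For item~(4), I first record that two components of the same region cannot share an edge: if they did, deleting that common edge would produce an admissible cluster with the same areas and strictly smaller perimeter. Hence a component $C$ of a region with $k$ components has at most $M-k$ bounded neighbors lying in other regions, plus possibly $E_0$; by item~(2) each neighbor contributes at most one edge, giving the stated bounds $M+1-k$ (or $M-k$ for internal components).

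For item~(1), if a bounded component $C\subset E_i$ were not simply connected, then $\RR^2\setminus C$ would have a bounded component $U$. Since $E_0$ is connected (Proposition~\ref{prop:weak_minimizers}) and contains a neighborhood of infinity, $U$ is entirely contained in $\bigcup_{j\ne i}E_j$; filling $U$ with $E_i$ produces a competitor in $\mathcal C^*(\boldsymbol a)$ of strictly smaller perimeter. For item~(2), if components $C$ and $D$ shared two distinct edges, these edges would enclose a bounded subdomain and a local perturbation along this subdomain would decrease perimeter while keeping each region's area at least $a_i$, contradicting weak minimality. For item~(3) with $N>2$, a component $C$ with $n=1$ edge is a disk adjacent to a single region $E_j$, and Lemma~\ref{lem:turning_angle} forces $(p_i-p_j)L=5\pi/3$; popping $C$ and redistributing its area into a third region (which exists since $N>2$) strictly decreases perimeter, and the case $n=2$ is analogous with a lens.

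For item~(6), I would argue by contradiction: if $\E\in\M^*(\boldsymbol a)\setminus\M(\boldsymbol a)$, then some $m(E_i)>a_i$ forces $p_i=0$ by Proposition~\ref{prop:weak_minimizers}. Combining this vanishing pressure with the total edge count $3(M-1)$ from item~(5) and the per-component edge bound from item~(4), the turning angle equations of Lemma~\ref{lem:turning_angle} form an overdetermined linear system in the remaining pressures; for $M\le 6$ one can enumerate the finitely many admissible topologies and verify directly that none admits a consistent nonnegative pressure assignment with a zero-pressure excess region. This case analysis is the main obstacle, and is the substance packed into the cited references~\cite{B,Btre,AT}.
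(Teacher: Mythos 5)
First, a point of comparison: the paper does not prove Proposition~\ref{prop:edges} at all --- it is quoted from \cite{B}, \cite{Btre} and \cite{AT} --- so there is no internal proof to measure your argument against; it has to stand on its own. Parts of it do (the Euler-formula computation in item~(5), granted connectedness of $\partial\E$, and the deduction of item~(4) from items~(2) and~(5) are the standard arguments), but there are genuine gaps elsewhere.

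The clearest one is item~(1). ``Filling $U$ with $E_i$'' is not an admissible competitor in $\mathcal C^*(\boldsymbol a)$: the hole $U$ is occupied by components of other regions $E_j$, and reassigning $U$ to $E_i$ strictly decreases $m(E_j)$ for those $j$, typically below $a_j$. The standard repair --- translate the contents of $U$ far away and then fill the hole with $E_i$ --- preserves every area and is admissible, but it does \emph{not} strictly decrease perimeter: each edge on $\partial U$ that was shared between $C$ and some $E_j$ is, after translation, shared between $E_j$ and $E_0$, so $P(\E)=\frac12\sum_k P(E_k)$ is unchanged. One then needs a further argument (e.g.\ that the translated piece can be brought back into contact, producing a tangency incompatible with the regularity of minimizers) to reach a contradiction; your proof as written stops one step short of this. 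Two related soft spots: in item~(5) you invoke $V-E+F=2$, which requires $\partial\E$ to be connected --- this is not free and is precisely one of the points the cited references take care of (for a disconnected boundary graph the count is $V-E+F=1+c$); and in item~(3) ``popping $C$ and redistributing its area'' must be made quantitative, since giving $C$ to $E_j$ leaves $E_i$ short of area and the compensation (a rescaling as in Proposition~\ref{prop:varII}, or adding a disjoint ball as in Proposition~\ref{prop:varI}) has a perimeter cost that must be beaten. Finally, item~(6) is not proved: the proposed enumeration of topologies with a ``consistent pressure assignment'' is left entirely to the reader and is, as you acknowledge, the actual content of the cited references. As it stands the proposal is an outline of the right general shape with one incorrect step (the item~(1) competitor) and the hardest item deferred.
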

\begin{theorem}[removal of triangle components]~\cite{W}\label{thm:triangle_remove}
  Let $\E\in \mathcal C(\boldsymbol a)$ be a stationary regular cluster
  and suppose that a connected component $C$ of some region $E_i$ has three
  edges.
  Consider the three arcs which arrive at the three vertices of $C$ but
  are not edges of $C$.
  The circles containing these three arcs meet in a point $P$ inside the component $C$.

  Moreover the cluster $\E'$ obtained from $\E$ by removing the component $C$
  and prolonging the three edges, is itself a stationary regular
  cluster $\E'\in \mathcal C(\boldsymbol a')$ with $a_i' = a_i - m(C)$
  (and the region $E_i$ disappears if $C$ was the only component of $E_i$) and
  $a_j'\ge a_j$ for all $j\neq i$. Also the pressures $p_j'$ of the
  regions of $\E'$ are equal to the pressure $p_j$ of the
  corresponding regions of $\E$ (if $E_i$ disappears because $C$
  was the only component of $E_i$, the regions must be relabeled but
  again the pressures of the corresponding regions remain the same).
\end{theorem}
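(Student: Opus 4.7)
I break the statement into three tasks: (i) show that the three circles $\gamma_1,\gamma_2,\gamma_3$ containing the external arcs concur at an interior point $P$ of $C$; (ii) verify that at $P$ the three extended arcs meet at $120^\circ$ with their original signed curvatures, so that replacing $C$ and its three vertices by a single three-way star at $P$ yields a regular stationary cluster; and (iii) book-keep the areas and pressures.

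Fix notation: let $v_1,v_2,v_3$ be the vertices of $C$ in cyclic order, $e_j$ the edge of $C$ joining $v_j$ to $v_{j+1}$, $R_j$ the region across $e_j$ from $C$, and $\alpha_j$ the external arc at $v_j$; then $\alpha_j$ separates $R_{j-1}$ from $R_j$ (indices mod $3$), and its containing circle $\gamma_j$ is determined by passing through $v_j$, by the tangent at $v_j$ making $120^\circ$ with both edges of $C$ meeting at $v_j$, and by signed curvature $p_{R_{j-1}}-p_{R_j}$. For (i) I would first observe that the branches of $\gamma_1$ and of $\gamma_2$ emanating from $v_1,v_2$ into the interior of $C$ (with initial tangent opposite to that of $\alpha_j$ at $v_j$, hence pointing inward) must cross at some $Q\in\mathrm{int}\,C$ by a Jordan-curve continuity argument. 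The main obstacle is to prove $Q\in\gamma_3$; my plan is to exploit the telescoping identity $\sum_{j}(p_{R_{j-1}}-p_{R_j})=0$ together with the three cyclic $120^\circ$ conditions at the $v_j$, which are precisely the constraints that collapse the generically codimension-two concurrence condition for three circles. A cleaner shortcut I would try first is a M\"obius reduction sending one $\gamma_j$ to a straight line, turning the concurrence into a balance of two circles with prescribed tangent-and-curvature data.

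Granted concurrence at $P$, the $120^\circ$ condition there is a short tangent-rotation computation: along each $\gamma_j$ the tangent turns by (signed curvature) times (arc length from $v_j$ to $P$), and summing these turns around the ``inner triangle'' formed by the three extensions, together with the telescoping curvature identity, forces $120^\circ$ between consecutive extensions at $P$. Every surviving edge keeps its signed curvature, so the original pressure assignment still satisfies the pressure-curvature rule in $\E'$ and the pressures of the surviving regions are unchanged (and if $C$ was the only component of $E_i$, then $E_i$ simply drops and we reindex). Finally, the three extensions partition former $C$ into three open pieces meeting at $P$, each bounded by one edge $e_j$ of $C$ and two extensions; the piece adjacent to $e_j$ attaches naturally to $R_j$. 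This yields $a_i'=a_i-m(C)$ and redistributes the mass $m(C)$ among $R_1,R_2,R_3$ in nonnegative amounts, so $a_j'\ge a_j$ for every $j\ne i$ while all other regions remain untouched.
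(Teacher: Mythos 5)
The paper offers no proof of this statement at all: it is quoted from Wichiramala's thesis \cite{W} and used as a black box, so there is no internal argument to compare yours against. Judged on its own, your write-up correctly identifies the right ingredients (the $120^\circ$ conditions at the three vertices, the vanishing of the sum of signed curvatures at each vertex, and the final partition of $C$ into three pieces that are donated to the adjacent regions, which gives $a_i'=a_i-m(C)$ and $a_j'\ge a_j$), and the pressure bookkeeping in part (iii) is fine once the geometry is in place.

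However, the heart of the theorem --- the concurrence of the three circles at a point $P\in\mathrm{int}\,C$ --- is not actually proved in your text; it is only announced as a plan (``my plan is to exploit the telescoping identity\dots'', ``a cleaner shortcut I would try first\dots''). Two steps in particular are genuine gaps. First, the claim that the inward branches of $\gamma_1$ and $\gamma_2$ must cross inside $C$ does not follow from a bare Jordan-curve argument: each branch is a circular arc that could in principle exit $C$ through the edge $e_1$ joining $v_1$ to $v_2$ before meeting the other, so one needs quantitative input from the curvature relations (or an inversive/M\"obius argument) to rule this out. Second, even granting an intersection point $Q$ of $\gamma_1$ and $\gamma_2$, the assertion $Q\in\gamma_3$ is exactly the nontrivial content of the theorem; the classical ``three circles through a common point at $120^\circ$ with curvatures summing to zero meet again'' lemma does not apply directly here because $\gamma_1,\gamma_2,\gamma_3$ pass through three \emph{different} vertices, and you never carry out the computation that reduces the concurrence to the cocycle condition. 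Likewise the $120^\circ$ angles at $P$ (needed for stationarity of $\E'$) are only sketched via a turning-angle count that presupposes the configuration you are trying to establish. As it stands the proposal is a plausible outline of the known proof strategy, not a proof.
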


% The following theorem was first proved in~\cite{FABHZ}. Here we present a
% different proof following~\cite{MM}.

\begin{theorem}[double bubble monotonicity]~\cite{FABHZ}\label{thm:double_bubble_monotone}
  Given $r_1>0$ and $r_2>0$, up to isometries, there
  exists a unique double bubble $\E$ such that the external
  radii of the two regions $E_1$ and $E_2$ are $r_1$ and $r_2$
  respectively.
  If we increase one radius (say) $r_1$ then the area of the
  corresponding region $E_1$ increases while the area of the other
  region $E_2$ decreases. As a consequence there is a unique double
  bubble with prescribed areas.
\end{theorem}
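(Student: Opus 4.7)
The plan is to construct the double bubble explicitly from $(r_1,r_2)$, then study the map $(r_1,r_2)\mapsto(m(E_1),m(E_2))$ via direct parameterization. By Definition~\ref{def:stationary}, every stationary double bubble has the three pressures $p_0=0$, $p_1=1/r_1$, $p_2=1/r_2$ (the outer arcs have curvature $1/r_i$), and the separating arc has signed curvature $p_1-p_2=1/r_1-1/r_2$. So the radii of all three arcs are already determined by $(r_1,r_2)$.

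For existence and uniqueness up to isometries, I would first place one vertex $V_1$ at the origin and use the $120^\circ$ condition to fix (up to a rotation) three normal directions $\hat n_0,\hat n_1,\hat n_2$ at $V_1$ at mutual angles $120^\circ$. The centers of the three circles are then forced to be $O_i=r_i\hat n_i$ for $i=1,2$, and $O_0=r_0\hat n_0$ with $r_0=r_1r_2/|r_1-r_2|$ (or the middle arc is a straight segment when $r_1=r_2$). One then checks that the three circles share a second common point $V_2$: this reduces to the algebraic identity expressing that the sum of signed curvatures at a vertex vanishes, equivalently $p_1-p_2-(p_1-p_0)+(p_2-p_0)=0$, which forces the three circles through $V_1$ with the prescribed $120^\circ$ tangent configuration to meet again. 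The remaining sign choice (which intersection to call $V_2$, and on which side the middle arc bulges) is determined by the requirement that $E_1$ and $E_2$ be disjoint nonempty regions with the outer arcs convex toward $E_0$. This gives uniqueness up to rigid motion.

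For the monotonicity, I would parameterize by the half-angles $\alpha_i$ subtended by the chord $V_1V_2$ at the center $O_i$. Since the chord length $\ell=|V_1V_2|$ is common to the three circles, one has
\[
r_0\sin\alpha_0 \;=\; r_1\sin\alpha_1 \;=\; r_2\sin\alpha_2 \;=\; \ell/2,
\]
and the $120^\circ$ vertex condition yields a linear relation among the $\alpha_i$ (of the form $\alpha_1+\alpha_2+\alpha_0=\pi/3$ after choosing the correct signs). Together with the identity $1/r_0=|1/r_1-1/r_2|$, these relations determine $\alpha_0,\alpha_1,\alpha_2$ as smooth functions of $(r_1,r_2)$. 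The area of each region is then the standard sum of a circular-segment area and the signed triangle area $V_1O_iV_2$, giving $m(E_i)$ as an explicit trigonometric function of $(r_1,r_2)$. Differentiating, say, with respect to $r_1$ at fixed $r_2$, I would show $\partial m(E_1)/\partial r_1>0$ and $\partial m(E_2)/\partial r_1<0$.

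The main obstacle is precisely this monotonicity step: the formulas for $m(E_1),m(E_2)$ are explicit but cumbersome, and the signs of the partial derivatives come out only after one uses the constraint $1/r_0=|1/r_1-1/r_2|$ and the $120^\circ$ relation among the $\alpha_i$ to simplify. A cleaner substitute, which I would prefer, is to differentiate the pressure formula (Theorem~\ref{thm:pressure_formula}) $P=2(m(E_1)/r_1+m(E_2)/r_2)$ along the one-parameter family, combined with the stationarity identity $dP=2(p_1\,dm(E_1)+p_2\,dm(E_2))=2(dm(E_1)/r_1+dm(E_2)/r_2)$ from the first variation of perimeter; cross-comparing these two expressions for $dP$ produces algebraic relations between the partial derivatives of $m(E_1),m(E_2)$ with respect to $r_1,r_2$ that, combined with the explicit form of a single derivative computed from the parameterization, pin down the required signs. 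Once monotonicity is in hand, the final statement about prescribed areas follows from injectivity of $(r_1,r_2)\mapsto(m(E_1),m(E_2))$ plus a continuity and limiting argument (as $r_i\to0$ or $r_i\to\infty$) to obtain surjectivity onto $\mathbb R_+^2$.
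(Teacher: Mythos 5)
The paper does not prove this theorem; it is imported verbatim from Foisy--Alfaro--Brock--Hodges--Zimba~\cite{FABHZ}, so there is no in-paper argument to compare against. Judged on its own terms, your proposal correctly identifies the right geometric data (three arcs whose curvatures are $1/r_1$, $1/r_2$, $|1/r_1-1/r_2|$, all concurrent in a pair of $120^\circ$ vertices) and the right strategy for existence and uniqueness up to isometry: the re-intersection of the three circles follows, after inversion at $V_1$, from the fact that three lines with unit normals summing to zero are concurrent iff the signed distances from the origin sum to zero, which is exactly the curvature balance $\kappa_0+\kappa_1+\kappa_2=0$. That part is essentially sound, though you leave the concurrency step at the level of an assertion.

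The genuine gap is the monotonicity, which is the entire content of the theorem: you explicitly write ``I would show $\partial m(E_1)/\partial r_1>0$ and $\partial m(E_2)/\partial r_1<0$'' and flag this as ``the main obstacle,'' without carrying out the computation. A plan is not a proof here, because the sign analysis is precisely the nontrivial part of~\cite{FABHZ}. Two concrete problems with what you do write: (i) the asserted angle relation ``$\alpha_0+\alpha_1+\alpha_2=\pi/3$ after choosing the correct signs'' is not the correct vertex relation; working it out, one gets two relations of the form $\alpha_1-\alpha_0=\pi/3$ and $\alpha_2+\alpha_0=\pi/3$ (equivalently $\alpha_1+\alpha_2=2\pi/3$ and $\alpha_0=\tfrac12(\alpha_1-\alpha_2)$), and the sign conventions really matter for the subsequent differentiation. (ii) Your alternative route via the pressure formula rests on the first-variation identity $dP=2(p_1\,dm_1+p_2\,dm_2)$, but the correct identity has no factor of $2$: for a stationary family one has $dP=\sum_i p_i\,dm_i$ (check against a single disc: $P=2\pi r$, $m=\pi r^2$, $p=1/r$, so $dP/dr=2\pi=p\,dm/dr$). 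With your factor of $2$, combining with Theorem~\ref{thm:pressure_formula} forces $\sum_i m_i\,dp_i=0$, which for the double bubble with $dr_2=0$ yields $dr_1=0$, an absurdity; with the correct factor you obtain one relation $\sum_i p_i\,dm_i+2\sum_i m_i\,dp_i=0$, which is a single linear constraint and does not by itself pin down the two signs you need. So both of your proposed routes to monotonicity stop short, and the surjectivity of $(r_1,r_2)\mapsto(m(E_1),m(E_2))$ (needed for existence with prescribed areas) is likewise only gestured at.
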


\begin{conjecture}[soap bubble conjecture]~\cite{SM}\label{conj:soap_bubble}
  For all $\boldsymbol a \in \RR^N_+$ each $\E \in \M(\boldsymbol a)$
  is connected.
\end{conjecture}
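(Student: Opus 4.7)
The plan is to proceed by contradiction, following the template the paper sets up for the equal-area $N=4$ case but pushing each step to apply in maximum generality. Suppose $\E \in \M(\boldsymbol a)$ has some region $E_i$ with more than one connected component. By Theorem~\ref{thm:existence_regularity} the cluster is regular and stationary, and since every strong minimizer is also a weak minimizer all of Proposition~\ref{prop:weak_minimizers}, Theorem~\ref{thm:pressure_formula} and Lemma~\ref{lem:turning_angle} apply.

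First I would try to bound the total number $M$ of bounded connected components. The pressure formula $P(\E) = 2\sum_i p_i m(E_i)$, combined with an isoperimetric lower bound on the perimeter of each small component and the lower bound on component measures supplied by Proposition~\ref{prop:varII}, should force the pressures to stay controlled and give an integer upper bound on $M$ in terms of $\boldsymbol a$. Proposition~\ref{prop:edges} then restricts the combinatorial type of the planar graph $\partial \E$: bounded components are simply connected, distinct components share at most one edge, the totals of edges and vertices are $3(M-1)$ and $2(M-1)$, and internal components carry few edges. In principle these reductions would leave a finite list of admissible combinatorial topologies for each value of $M$.

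Second, for each candidate disconnected topology I would either (a) locate a connected component $C$ with three edges and appeal to Theorem~\ref{thm:triangle_remove}, excising $C$ to produce a stationary cluster of strictly smaller area and perimeter, which when compared with the standard $k$-bubble clusters for the reduced area vector contradicts minimality of $\E$; or (b) construct an explicit area-preserving variation moving mass from a high-pressure component into a lower-pressure one, decreasing perimeter by an amount proportional to the pressure gap. The turning angle formula of Lemma~\ref{lem:turning_angle} guarantees such a gap once the edge count $n$ and the incident lengths $L_j$ are fixed, while Proposition~\ref{prop:weak_minimizers} keeps all pressures nonnegative.

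\textbf{The main obstacle.} The genuine difficulty is controlling the combinatorics when the $a_i$ are strongly asymmetric: arbitrarily small components can coexist with very large ones, the list of admissible topologies explodes with $N$, and the explicit comparison clusters needed in step (a), playing the role of the sandwich configuration here, are not known for general $N$. The paper avoids exactly this by restricting to $N=4$ with $a_1=\dots=a_4$, where the symmetry bounds $M \le 6$ via Proposition~\ref{prop:two_small} and each surviving topology is excluded by hand in Sections~\ref{sec:six}--\ref{sec:four}. A proof of the full conjecture presumably requires either a universal monotonicity statement generalising Theorem~\ref{thm:double_bubble_monotone} to $N$-clusters, or a variational mechanism that acts on disconnected regions without first enumerating topologies.
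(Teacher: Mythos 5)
There is a genuine gap --- in fact the whole proof is missing. The statement you are asked about is labeled a \emph{Conjecture} in the paper precisely because it is open: the authors do not prove it, and the entire point of the paper is to establish only the very special case $N=4$, $\boldsymbol a=(1,1,1,1)$ (Theorem~\ref{thm:connected}). What you have written is a research programme, not an argument, and you concede as much in your final paragraph. Each of the two pillars you lean on fails in general. First, the component bound: the only quantitative bound available is Theorem~\ref{thm:lower_estimate}, which gives $M \le \frac{9}{20}N^2 \Vert\boldsymbol a\Vert_{1/2}/\Vert\boldsymbol a\Vert_{-1}$; this blows up both with $N$ and with the asymmetry of the $a_i$ (a tiny $a_j$ makes $\Vert\boldsymbol a\Vert_{-1}$ tiny), so there is no finite list of topologies to check uniformly in $\boldsymbol a$. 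The sharp bound $M\le 6$ in the paper comes from Proposition~\ref{prop:two_small}, whose proof is a delicate numerical comparison against the explicit competitor of Proposition~\ref{prop:k_0} and uses $a_1=\dots=a_4=1$ essentially at every step. Second, your exclusion mechanisms do not close: Theorem~\ref{thm:triangle_remove} does not by itself yield a perimeter contradiction --- it preserves stationarity and is used in the paper only to transfer pressure bounds via Lemma~\ref{lemma:double_bubble_pressure}, and comparing the reduced cluster ``with the standard $k$-bubble'' presupposes knowledge of minimizers for smaller $N$, which is exactly the unsolved problem for $N\ge 4$ with general areas. Likewise, the ``area-preserving variation moving mass from a high-pressure component to a low-pressure one'' is the missing idea, not an available tool; the paper's variations (Propositions~\ref{prop:varI}, \ref{prop:varII}, \ref{prop:pressure_length}) only give inequalities that must then be played against each other topology by topology.

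So the honest assessment is that your proposal correctly reconstructs the \emph{strategy} of Sections~\ref{sec:1111}--\ref{sec:four} and correctly identifies why it does not generalize, but it proves nothing beyond what the paper already proves. If you intended to address the conjecture itself, you would need either the universal monotonicity statement you allude to, or a topology-free variational argument; neither is supplied here, and neither is known.
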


The main aim of this paper is to prove that the conjecture holds in
the case $\boldsymbol a = (1,1,1,1)$.

\section{Estimates on general clusters}\label{sec:variations}

\begin{lemma}[isoperimetric inequality for clusters]\label{lem:isop}
  Given $\E\in \mathcal C^*(\boldsymbol a)$ one has
  \[
  P(\E) \ge \sqrt{\pi}\left(\sqrt{\sum_{k=1}^N a_k} + \sum_{k=1}^N\sqrt{a_k}\right).
  \]
\end{lemma}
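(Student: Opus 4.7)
The plan is to split $P(\E) = \tfrac{1}{2}\sum_{i=0}^N P(E_i)$ into the contribution of the internal regions $E_1,\dots,E_N$ and that of the external region $E_0$, and to apply the classical isoperimetric inequality in the plane to each piece separately.

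First I would deal with the internal regions. Since $\E\in\mathcal C^*(\boldsymbol a)$, we have $m(E_i)\ge a_i$ for every $i=1,\dots,N$, so the classical isoperimetric inequality gives
\[
P(E_i)\ \ge\ 2\sqrt{\pi\,m(E_i)}\ \ge\ 2\sqrt{\pi\,a_i},\qquad i=1,\dots,N.
\]

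Next I would handle the external region. Because the $E_i$ are pairwise disjoint up to negligible sets, the union $U\defeq \bigcup_{i=1}^N E_i$ satisfies $m(U)=\sum_{i=1}^N m(E_i)\ge \sum_{k=1}^N a_k$. The external region is (up to a negligible set) the complement of $U$, and in the theory of sets of finite perimeter $P(E_0)=P(\RR^2\setminus U)=P(U)$. Applying the classical isoperimetric inequality to $U$,
\[
P(E_0)\ =\ P(U)\ \ge\ 2\sqrt{\pi\,m(U)}\ \ge\ 2\sqrt{\pi\sum_{k=1}^N a_k}.
\]

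Summing these bounds and using $P(\E)=\tfrac{1}{2}\sum_{i=0}^N P(E_i)$ yields
\[
2P(\E)\ \ge\ 2\sqrt{\pi\sum_{k=1}^N a_k}\ +\ \sum_{i=1}^N 2\sqrt{\pi\,a_i},
\]
which is exactly the desired inequality after dividing by $2$ and factoring $\sqrt{\pi}$.

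No step looks to be a real obstacle; the only point deserving care is the identity $P(E_0)=P(U)$, which requires working with the distributional/De Giorgi perimeter rather than just Hausdorff measure of boundaries, so that the perimeter of a measurable set and that of its complement coincide. With that convention in place, the argument is just two applications of the Euclidean isoperimetric inequality, one to each internal region and one to the union of all of them.
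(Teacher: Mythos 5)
Your proof is correct and follows essentially the same route as the paper: the paper applies the isoperimetric inequality in the form $P(E)\ge 2\sqrt{\pi}\sqrt{\min\{m(E),m(\RR^2\setminus E)\}}$ to every region including $E_0$, which is exactly your bound $P(E_0)=P(U)\ge 2\sqrt{\pi\,m(U)}$ packaged into a single formula. Your remark about working with the De Giorgi perimeter so that $P(E_0)=P(\RR^2\setminus E_0)$ is the right justification and is implicit in the paper's statement of the inequality.
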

\begin{proof}
  Given any $\E \in \mathcal C^*(\boldsymbol a)$, by applying the isoperimetric inequality
  \[
  P(E) \ge 2 \sqrt{\pi}\sqrt{\min \{ m(E), m(\boldsymbol R^2\setminus E)\}}
  \]
  one has:
  \begin{align}
    P(\E)
    = \frac 1 2 \sum_{k=0}^N P(E_k)
    \ge \sqrt{\pi}\left(\sqrt{\sum_{k=1}^N m(E_k)} + \sum_{k=1}^N \sqrt{m(E_k)}\right).
  \end{align}
\end{proof}

\begin{proposition}[variation I]\label{prop:varI}
  Let $\E \in \M^*(\boldsymbol a)$ and suppose that $C_i$ is a
  component of the region $E_i$.
  Let $\ell$ be the sum of the
  lengths of the edges of $C_i$ in common with the region $E_k \neq E_i$
  ($k=0$ is also admitted). Then
  \[
     \ell \le 2 \sqrt{\pi}\sqrt{m(C_i)}.
  \]
\end{proposition}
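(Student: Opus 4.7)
The plan is a standard competitor argument. I will modify $\E$ by merging the offending component $C_i$ into the neighbouring region $E_k$ (thereby erasing the $\ell$ of shared interface) and compensating for the resulting area loss in $E_i$ by dropping a round disk of area $m(C_i)$ into the external region, far from everything else. Minimality of $\E$ in $\M^*(\boldsymbol a)$ against this competitor will give the stated inequality after a perimeter accounting.

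Concretely, let $D$ be a closed disk of area $m(C_i)$ placed deep inside $E_0$, disjoint from the closure of $\bigcup_{j\neq 0} E_j$. Define the competitor $\E'$ by
\[
  E_i' = (E_i\setminus C_i)\cup D, \qquad E_k' = E_k\cup C_i,
\]
and leave the other regions unchanged; if $k=0$ the same idea applies, absorbing $C_i$ into $E_0'$ and carving $D$ out of it simultaneously. Then $m(E_i')=m(E_i)\ge a_i$, $m(E_k')=m(E_k)+m(C_i)\ge a_k$ (vacuous when $k=0$), and every other area is preserved, so $\E'\in\mathcal C^*(\boldsymbol a)$ is a feasible weak competitor.

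For the perimeter the crucial input is the definition of \emph{component}: $P(E_i)=P(C_i)+P(E_i\setminus C_i)$, so detaching $C_i$ from $E_i$ creates no spurious boundary. Since $D$ is isolated, $P(E_i')=P(E_i)-P(C_i)+P(D)$; since exactly $\ell$ of boundary is shared between $C_i$ and $E_k$, the merge gives $P(E_k')=P(E_k)+P(C_i)-2\ell$; and $P(E_0')=P(E_0)+P(D)$ (the $k=0$ case combines these two identities into one). Summing $P(E_j')-P(E_j)$ over all regions, the $\pm P(C_i)$ contributions cancel and
\[
  2P(\E')-2P(\E) \;=\; 2P(D)-2\ell \;=\; 4\sqrt{\pi\, m(C_i)}-2\ell.
\]
Applying $P(\E)\le P(\E')$ rearranges to $\ell\le 2\sqrt{\pi}\sqrt{m(C_i)}$. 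The only real obstacle is the bookkeeping: one must invoke the component identity to rule out any would-be interface between $C_i$ and $E_i\setminus C_i$, and treat $k=0$ with a minor variant; beyond that no subtlety appears.
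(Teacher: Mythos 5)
Your proof is correct and is essentially the same competitor argument the paper uses: merge $C_i$ into $E_k$ to erase the interface of length $\ell$, compensate $E_i$ with a far-away disk of equal area, and compare perimeters. The only difference is that you spell out the perimeter bookkeeping (including the $k=0$ case) more explicitly than the paper, which simply notes that $\ell$ is removed and $P(B)$ is added.
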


\begin{proof}
  Let $B$ be any ball disjoint from $\E$ with the same area as $C_i$, so
  that $P(B) = 2 \sqrt{\pi}\sqrt{m(C_i)}$.
  Consider the cluster
  $\E'$ obtained by $\E$ by means of the following variations on the
  regions $E_i$ and $E_j$:
  \[
  E_i' = (E_i \setminus C_i ) \cup B,
  \qquad
  E_j' = E_j \cup C_i.
  \]
  Clearly we have $m(E_i') = m(E_i)$ and $m(E_j') > m(E_j)$. Hence
  $\E' \in\mathcal C^*(\boldsymbol a)$. Moreover, since the edges of length
  $\ell$ has been removed and the ball $B$ has been added, by the
  minimality of $\E$ we have:
  \[
  0 \le P(\E') - P(\E) = P(B) - \ell = 2\sqrt{\pi}\sqrt{m(C_i)} - \ell.
  \]
\end{proof}

\begin{proposition}[variation II]\label{prop:varII}
  Let $\E \in \M^*(\boldsymbol a)$ and suppose that $C_i$ is a
  component of the region $E_i$ with $0 < m(C_i) < m(E_i)$.
  Let $\ell_k$ be the sum of the
  lengths of the edges of $C_i$ in common with the region $E_k \neq E_i$
  ($k=0$ is also admitted).
  Then
  \begin{equation}\label{eq:ell_k}
    \ell_k
    \le \frac{m(C_i)}{\lvert 2 a_i-m(C_i)\rvert} P(\E).
  \end{equation}
  Moreover, if we denote by $r\le N$ the number of regions which have
  an edge in common with $C_i$,
  for all $\lambda \ge P(\E)$ one has:
  \begin{align}\label{eq:mC_low}
    m(C_i) &\ge \frac{16 \pi a_i^2}{r^2\lambda^2}\left(1- \frac{16\pi a_i}{r^2\lambda^2}\right).
%    m(C_i) &\ge \frac{16 \pi m(E_i)^2}{r^2\lambda^2}\left(1- \frac{16\pi m(E_i)}{r^2\lambda^2}\right).
  \end{align}
\end{proposition}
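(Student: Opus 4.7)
The plan for \eqref{eq:ell_k} is a move-and-rescale variation. Let $\E'$ be obtained from $\E$ by reassigning $C_i$ from $E_i$ to $E_k$; a short boundary computation (using that $C_i$ is a component of $E_i$ and the perimeter-of-union formula for disjoint sets) shows $P(\E')=P(\E)-\ell_k$, with $m(E_i')=m(E_i)-m(C_i)>0$ and $m(E_j')\ge m(E_j)\ge a_j$ for all $j\ne i$. To return to $\mathcal{C}^*(\boldsymbol a)$ I rescale $\E'$ by $s\defeq\max\bigl(1,\sqrt{a_i/(m(E_i)-m(C_i))}\bigr)$, which is well defined since $m(C_i)<m(E_i)$; the choice $s\ge 1$ automatically preserves the other area constraints. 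Weak minimality of $\E$ forces $P(\E)\le s\bigl(P(\E)-\ell_k\bigr)$, i.e.\ $\ell_k\le(1-1/s)P(\E)$. When $s=1$ this gives $\ell_k\le 0$ (trivial); when $s>1$, rationalising $1-\sqrt{(m(E_i)-m(C_i))/a_i}$ to $(a_i-m(E_i)+m(C_i))/(a_i+\sqrt{a_i(m(E_i)-m(C_i))})$ and using $m(E_i)\ge a_i$ to bound the denominator from below by $2a_i-m(C_i)$ and the numerator from above by $m(C_i)$ yields \eqref{eq:ell_k} in the regime $m(C_i)\le 2a_i$. In the complementary regime $m(C_i)>2a_i$ the right-hand side of \eqref{eq:ell_k} exceeds $P(\E)$, and the estimate is implied by the trivial $\ell_k\le P(\E)$ (each edge between $C_i$ and $E_k$ contributes exactly once to $P(\E)$).

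For \eqref{eq:mC_low} I combine \eqref{eq:ell_k} with the classical isoperimetric inequality applied to $C_i$. Summing \eqref{eq:ell_k} over the $r$ regions adjacent to $C_i$ gives
\[
P(C_i)=\sum_k\ell_k\;\le\;\frac{r\,m(C_i)\,P(\E)}{|2a_i-m(C_i)|},
\]
while $P(C_i)\ge 2\sqrt{\pi\, m(C_i)}$ by the planar isoperimetric inequality. In the regime $m(C_i)\le 2a_i$, squaring and rearranging these two inequalities (after first replacing $P(\E)$ by any $\lambda\ge P(\E)$, which only weakens things) produces a quadratic inequality in $\sqrt{m(C_i)}$, whose solution is $\sqrt{m(C_i)}\ge\sqrt{q^2+2a_i}-q$ with $q\defeq r\lambda/(4\sqrt\pi)$. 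Using the identity $(\sqrt{q^2+2a_i}-q)(\sqrt{q^2+2a_i}+q)=2a_i$ together with the elementary bound $\sqrt{q^2+2a_i}\le q+a_i/q$ gives $m(C_i)\ge a_i^2/(q^2+a_i)$, and applying $1/(1+z)\ge 1-z$ to $a_i^2/q^2 \cdot 1/(1+a_i/q^2)$ converts this into exactly \eqref{eq:mC_low}. The complementary regime $m(C_i)>2a_i$ is immediate: the isoperimetric inequality for $C_i$ gives $\lambda^2\ge P(\E)^2\ge 4\pi m(C_i)>8\pi a_i$, so the right-hand side of \eqref{eq:mC_low} is at most $16\pi a_i^2/(r^2\lambda^2)<2a_i<m(C_i)$.

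The only real subtlety I anticipate is the sign analysis around the absolute value in $|2a_i-m(C_i)|$: the move-and-rescale variation is naturally tight only for small components, and the regime $m(C_i)\ge 2a_i$ must be dispatched by a separate (but easy) comparison with the isoperimetric bound on $P(\E)$. Modulo this case split, both estimates reduce to elementary algebra, with the scaling variation playing a role analogous to that of Proposition~\ref{prop:varI} but coupled to the more delicate area constraint coming from weak minimality.
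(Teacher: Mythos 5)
Your proposal is correct and follows essentially the same route as the paper: reassign $C_i$ to $E_k$, rescale to restore the area constraint, invoke weak minimality to get \eqref{eq:ell_k}, then combine with the isoperimetric inequality on $C_i$ and solve a quadratic to get \eqref{eq:mC_low}. The only differences are bookkeeping ones — the paper rescales by $t=\sqrt{m(E_i)/(m(E_i)-m(C_i))}$ and afterwards bounds $2m(E_i)-m(C_i)\ge\lvert 2a_i-m(C_i)\rvert$, which avoids your case split on the sign of $2a_i-m(C_i)$, and its quadratic is in $m(C_i)$ rather than $\sqrt{m(C_i)}$.
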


\begin{proof}
  Let
  \[
  t
  = \sqrt{\frac{m(E_i)}{m(E_i) - m(C_i)}}
  = \sqrt{1 + \frac{m(C_i)}{m(E_i)-m(C_i)}}
  \le 1 + \frac 1 2 \frac {m(C_i)}{m(E_i)-m(C_i)}
  \]
  and consider a new cluster $\E'$ whose regions are defined
  by
  $E'_i = t (E_i   \setminus C_i)$,
  $E'_k = t (E_k \cup C_i)$
  and $E'_j = t E_j$ when $j\not\in\{i,k\}$.
  Simply speaking, the cluster $\E'$ has been obtained from $\E$ by
  giving $C_i$ to $E_k$ and then rescaling of a factor $t>1$.

  Notice that $t$ was defined so that
  \[
  m(E'_i) = t^2 (m(E_i) - m(C_i)) = m(E_i)
  \]
  and clearly every other region does not decrease its measure
  since $t > 1$.
  So  $\E' \in \mathcal C^*(\boldsymbol a)$ is a weak competitor to
  $\boldsymbol  E$.
  On the other hand since in the cluster $\E'$ all edges in
  common between the component $t C_i$
  and the region $t E_k$ have been removed (and these edges have
  a total length of $t \ell_k$) we have
  \[
  P(\E') = t (P(\E) - \ell_k).
  \]
  Since $P(\E) \le P(\E')$ one obtains:
  \begin{align*}
  P(\E) & \le t (P(\E) - \ell_k)
  \le \left(1+\frac{m(C_i)}{2(m(E_i) - m(C_i))}\right)(P(\E) -
  \ell_k) \\
  &= P(\E) + \frac{m(C_i)}{2(m(E_i)-m(C_i))} P(\E)
    - \frac{2m(E_i)-m(C_i)}{2(m(E_i)-m(C_i))} \ell_k
  \end{align*}
  which is equivalent to
  \begin{equation*}
      \ell_k \le \frac{m(C_i)}{2m(E_i)-m(C_i)} P(\E).
  \end{equation*}
  Using $0\le a_i \le m(E_i)$ and $m(C_i)\le m(E_i)$ one can easily check that
  \[
  2 m(E_i)-m(C_i) \ge \lvert 2a_i - m(C_i)\rvert
  \]
  so that \eqref{eq:ell_k} is proven.

  Now if the component $C_i$ has edges in common with at least $r$ other
  regions, there is $k$ such that $\ell_k \ge P(C_i)/r$. By also
  applying the isoperimetric inequality
  $P(C_i) \ge 2 \sqrt \pi\sqrt{m(C_i)}$ we obtain:
  \[
  2 \sqrt{\pi}\sqrt{m(C_i)} \le r \ell_k \le \frac{r m(C_i)}{\lvert 2 a_i - m(C_i)\rvert}P(\E) \le
  \frac{r\lambda m(C_i)}{\lvert 2a_i -m(C_i)\rvert}
  \]
  if $P(E) \le \lambda$ as in the statement of the Theorem being
  proved.
  Whence,
  by squaring and then dividing by $m(C_i)$, we obtain
  \[
  4 \pi \le \frac{r^2\lambda^2 m(C_i)}{(2a_i - m(C_i))^2}
  = \frac{r^2\lambda^2 m(C_i)}{4a_i^2 - 4 a_i m(C_i) + m^2(C_i)}
  \]
  which is equivalent to the
  following quadratic inequality in $m(C_i)$:
  \[
  m^2(C_i) - \left(4 a_i + \frac{r^2\lambda^2}{4\pi}\right)m(C_i) + 4 a_i^2
  \le 0.
  \]
  The corresponding equation has two positive solutions, and $m(C_i)$
  is larger than the smaller of the two. So we obtain:
  \begin{equation}\label{eq:mC_low1}
    \begin{aligned}
    m(C_i) &\ge 2 a_i +\frac{r^2\lambda^2}{8\pi} - \sqrt{\left(2a_i +
      \frac{r^2\lambda^2}{8\pi}\right)^2 - 4 a_i^2} \\
    &= 2 a_i - \frac{r^2\lambda^2}{8\pi}\left(\sqrt{1+\frac{32 \pi a_i}{r^2\lambda^2}}-1\right).
    \end{aligned}
  \end{equation}
  By using the inequality:
  \[
  \sqrt{1+x} \le 1 + \frac{x}{2} - \frac{x^2}{8} + \frac{x^3}{16}
  \]
  with $x=\frac{32 \pi a_i}{r^2\lambda^2}$, after some straightforward
  simplifications, we obtain~\eqref{eq:mC_low}.
%  Since $\E\in\M^*(\boldsymbol m(\E))$ the same estimate holds for $\boldsymbol a = \boldsymbol m(\E)$ which gives~\eqref{eq:mC_low}

\end{proof}

The following result is not used in the rest of the paper, but might
be interesting by itself.

\begin{theorem}\label{thm:lower_estimate}
  Let $\E \in \M^*(\boldsymbol a)$ be an $N$-cluster with $N\ge 3$
  and suppose that $C_i$ is a
  component of the region $E_i$ with $0 < m(C_i) < m(E_i)$ and suppose
  that $r$ is the number of regions which are adjacent to $C_i$.

  Let
  \[
  \left\Vert \boldsymbol a\right\Vert_{\frac 1 2}
  = \left(\sum_{j=1}^N \sqrt{a_j}\right)^2,
  \qquad
  \left\Vert \boldsymbol a\right\Vert_{-1}
  = \left(\sum_{j=1}^N (a_j)^{-1}\right)^{-1}.
  \]

  Then
  \begin{equation}\label{eq:lower_estimate}
    m(C_i)
    \ge \frac{20}{9}\frac{a_i^2}{r^2\left\Vert \boldsymbol
      a\right\Vert_{\frac 1 2}}\geq \frac{20}{9}\frac{a_i^2}{N^2\left\Vert \boldsymbol
      a\right\Vert_{\frac 1 2}}.
  \end{equation}
  In particular, the number $M_i$ of connected components of $E_i$ has
  the following bound
  \[
  M_i \le \frac{9}{20}N^2\frac{\Vert \boldsymbol a\Vert_{\frac 1 2}}{a_i}
  \]
  and hence the total number $M$ of connected components of $\E$ is
  bounded by
  \[
  M \le \frac{9}{20}N^2\frac{\Vert \boldsymbol a\Vert_{\frac 1 2}}{\Vert
 \boldsymbol a\Vert_{-1}}.
  \]
\end{theorem}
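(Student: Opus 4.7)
The plan is to apply the lower bound \eqref{eq:mC_low} from Proposition~\ref{prop:varII} with a concretely chosen admissible $\lambda$, and then to sum the resulting estimate on $m(C_i)$ first over the components of each region $E_i$ and then over $i$. To pick $\lambda$ I would bound $P(\E)$ from above by comparing $\E$ with a disjoint union of $N$ round disks of areas $a_1,\dots,a_N$, which is a valid competitor in $\mathcal{C}^*(\boldsymbol a)$, so that
\[
P(\E) \le 2\sqrt{\pi}\sum_{j=1}^N\sqrt{a_j} = 2\sqrt{\pi\,\|\boldsymbol a\|_{1/2}}.
\]
Inserting $\lambda^2 = 4\pi\|\boldsymbol a\|_{1/2}$ into \eqref{eq:mC_low} gives
\[
m(C_i) \ge \frac{4 a_i^2}{r^2\|\boldsymbol a\|_{1/2}}\left(1 - \frac{4 a_i}{r^2\|\boldsymbol a\|_{1/2}}\right),
\]
so the first inequality in \eqref{eq:lower_estimate} reduces to showing that the parenthetical factor is at least $5/9$, i.e.\ $r^2\|\boldsymbol a\|_{1/2}\ge 9 a_i$; the weaker form with $N$ in place of $r$ is then immediate from $r\le N$.

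I expect the main obstacle to be precisely this lower bound on $r^2\|\boldsymbol a\|_{1/2}$. Since $\|\boldsymbol a\|_{1/2}\ge a_i$ holds trivially, it suffices to establish $r\ge 3$. For this I would combine items~(2) and~(3) of Proposition~\ref{prop:edges} (the component $C_i$ has at least three edges; any two components share at most one edge) with the observation that two distinct components of the same region cannot meet at a common vertex of a stationary cluster, because the third edge at that vertex would then separate that region from itself. This forces the region neighbouring $C_i$ to change at each vertex of $C_i$: the case $r=1$ is impossible because all edges of $C_i$ would then lie on a single component of the unique adjacent region, violating~(2), and $r=2$ forces the edges of $C_i$ to alternate between the two adjacent regions so that some component still shares at least two edges with $C_i$, again contradicting~(2).

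Once \eqref{eq:lower_estimate} is in hand, the bounds on $M_i$ and $M$ follow by summation. When $M_i\ge 2$, every component of $E_i$ meets the hypothesis $0<m(C)<m(E_i)$, so \eqref{eq:lower_estimate} applies to each of them and summing yields
\[
m(E_i)\ge M_i\cdot\frac{20\, a_i^2}{9\, N^2\|\boldsymbol a\|_{1/2}}.
\]
Combined with $m(E_i)=a_i$ (and the trivial case $M_i=1$, which holds because $a_i\le\|\boldsymbol a\|_{1/2}$ and $N\ge 3$), this rearranges to $M_i \le \frac{9\, N^2\|\boldsymbol a\|_{1/2}}{20\, a_i}$. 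Summing over $i=1,\dots,N$ and recognising $\sum_i a_i^{-1}=\|\boldsymbol a\|_{-1}^{-1}$ then produces the final bound $M\le \frac{9\, N^2\|\boldsymbol a\|_{1/2}}{20\,\|\boldsymbol a\|_{-1}}$.
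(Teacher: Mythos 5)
Your approach is exactly the paper's: apply \eqref{eq:mC_low} with $\lambda = 2\sqrt{\pi\Vert\boldsymbol a\Vert_{\frac 1 2}}$, the perimeter of the disjoint-disk competitor, then sum over components. The derivation of \eqref{eq:lower_estimate} itself is fine (you get the factor $\ge 5/9$ from $r\ge 3$ together with $\Vert\boldsymbol a\Vert_{\frac 1 2}\ge a_i$, whereas the paper instead uses $\lambda^2 = P(\E')^2 \ge P(\E)^2\ge 4\pi m(E_i)$; both routes work). But your passage to the bound on $M_i$ has a genuine gap. From $m(E_i)\ge M_i\cdot\frac{20\,a_i^2}{9N^2\Vert\boldsymbol a\Vert_{\frac 1 2}}$ you conclude by invoking $m(E_i)=a_i$; for a weak minimizer $\E\in\M^*(\boldsymbol a)$, however, you only know $m(E_i)\ge a_i$, with strict inequality possible when $p_i=0$ (Proposition~\ref{prop:weak_minimizers}). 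Since the excess area sits in the numerator, what you actually get is $M_i\le\frac{9N^2\Vert\boldsymbol a\Vert_{\frac 1 2}}{20a_i}\cdot\frac{m(E_i)}{a_i}$, which exceeds the claimed bound. The paper avoids this by first noting that $\E\in\M^*(\boldsymbol a^*)$ with $\boldsymbol a^*=\boldsymbol m(\E)$ and applying Proposition~\ref{prop:varII} with $\boldsymbol a^*$ in place of $\boldsymbol a$; this yields the \emph{stronger} inequality $m(C_i)\ge\frac{20\,(a_i^*)^2}{9N^2\Vert\boldsymbol a\Vert_{\frac 1 2}}$ with $a_i^*=m(E_i)$, whence $M_i\le m(E_i)/m(C_i)\le\frac{9N^2\Vert\boldsymbol a\Vert_{\frac 1 2}}{20a_i^*}\le\frac{9N^2\Vert\boldsymbol a\Vert_{\frac 1 2}}{20a_i}$, the final replacement $a_i^*\to a_i$ now living in the denominator and therefore going the right way. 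The fix for your write-up is to carry out the whole first part with $m(E_i)$ in place of $a_i$ and deduce \eqref{eq:lower_estimate} as stated only at the end from $m(E_i)\ge a_i$.

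A smaller point: your argument for $r\ge 3$ in the case $r=2$ is not quite watertight. Alternation of the edges of $C_i$ between $E_j$ and $E_k$ does not by itself force a single connected component to share two edges with $C_i$: a four-sided $C_i$ could abut two distinct components of $E_j$ on opposite sides and two distinct components of $E_k$ on the remaining two sides without violating Proposition~\ref{prop:edges}(2). (The paper is also terse here, citing only that every component has at least three edges; your vertex observation does settle it cleanly when $C_i$ is triangular.)
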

\begin{proof}
  Consider, as a competitor, a cluster $\E'$ whose regions
  $E'_i$ are disjoint balls with area $a_i$ and let
  \[
  \lambda = P(\E') = 2\sqrt{\pi} \sum_{j=1}^N \sqrt{a_j}
  = 2 \sqrt{\pi}\sqrt{\left\Vert \boldsymbol a\right\Vert_{\frac 1 2}}
  \]
  Since $\E'\in \mathcal C^*(\boldsymbol a)$ we have
  $P(\E)\le \lambda$.
  Notice that $\E \in \M^*(\boldsymbol a)$ implies that $\E \in
  \M^*(\boldsymbol a^*)$ with $\boldsymbol a^* = \boldsymbol m(\E)$,
  we can apply Proposition~\ref{prop:varII} with $\boldsymbol a^*$ in place
  of $\boldsymbol a$ and with $\lambda$ defined as above.
  So \eqref{eq:mC_low} holds with this value of $\lambda$ and
  $\boldsymbol a^*$ in place of $\boldsymbol a$.

  Notice also that $\lambda = P(\E') \ge P(\E) \ge P(E_i) \ge 2
  \sqrt{\pi} \sqrt{m(E_i)} = 2\sqrt{\pi}\sqrt{a^*_i}$.
  Moreover $r\ge 3$
  since, by Proposition~\ref{prop:edges}, we know that for $N\ge 3$ every
  component has at least three edges.  Hence we know that
  \[
  1 - \frac{16\pi a^*i}{r^2\lambda^2}
  \ge 1 - \frac{16\pi a^*_i}{9\cdot 4 \pi a^*_i} = \frac{5}{9}.
  \]
  So \eqref{eq:mC_low} becomes (notice that $r\leq N$)
  \begin{align*}
    m(C_i)
    & \ge\frac{16\pi (a^*_i)^2}{4 \pi r^2 \left\Vert \boldsymbol a\right\Vert_{\frac 1 2}}\cdot \frac 5 9
     = \frac{20}{9} \cdot\frac{(a^*_i)^2}{r^2\left\Vert \boldsymbol a\right\Vert_{\frac 1 2}}\geq
     \frac{20}{9} \cdot\frac{(a^*_i)^2}{N^2\left\Vert \boldsymbol a\right\Vert_{\frac 1 2}}
  \end{align*}
  and, noting that $a^*_i = m(E_i)\ge a_i$, \eqref{eq:lower_estimate} is proved.

  Now suppose that $C_i$ be the component of $E_i$ with smaller area. Then
  $a^*_i = m(E_i) \ge M_i \cdot m(C_i)$ and we have
  \[
  M_i \le \frac{a^*_i}{m(C_i)}
  \le \frac{a^*_i}{\frac{20}{9}\frac{(a^*_i)^2}{N^2\Vert \boldsymbol a\Vert_{\frac 1 2}}}
  = \frac{9}{20}\cdot \frac{N^2 \Vert \boldsymbol a\Vert_{\frac 1 2}}{a^*_i}
  \le \frac{9}{20}\cdot \frac{N^2 \Vert \boldsymbol a\Vert_{\frac 1 2}}{a_i}
  \]
  and summing up for $i=1,\dots, N$ we obtain:
  \[
  M = \sum_{i=1}^N M_i
  \le \frac{9}{20}N^2 \Vert \boldsymbol
  a\Vert_{\frac 1 2} \sum_{i=1}^N \frac{1}{a_i}
  = \frac{9}{20} N^2 \frac{\Vert \boldsymbol a\Vert_{\frac 1 2}}{\Vert
  \boldsymbol a\Vert_{-1}}.
  \]
\end{proof}

\begin{proposition}\label{prop:pressure_perimeter}
  Let $\E\in \M^*(\boldsymbol a)$ and let $C$ be a connected component of
  some region $E_i$. Let $n$ be the number of edges of $C$. Then we
  have the following estimate on the pressure of the region $E_i$:
  \[
  p_i \ge \frac{(6-n)\pi}{3P(C)} + \left(1-\frac{\ell}{P(C)}\right) p_{\mathrm{min}}
  \ge \frac{(6-n)\pi}{3P(C)}
  \]
  where $\ell$ is the length of the external edge of $C$ ($\ell=0$
  if $C$ is internal) and $p_{\mathrm{min}}$ is the lowest pressure of
  the bounded regions
  which are adjacent to $C$.
\end{proposition}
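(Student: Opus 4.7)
The plan is to apply the turning angle formula of Lemma~\ref{lem:turning_angle} to the component $C$ and then solve for $p_i$ by an elementary rearrangement. With the notation of that lemma, $L_j$ is the total length of the edges of $C$ shared with the region $E_j$, so $L_0 = \ell$ is the external edge length, $L_i = 0$ (two distinct components of the same region cannot share an edge in a regular cluster), and consequently $\sum_{j=0}^N L_j = P(C)$.

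Starting from
\[
\frac{(6-n)\pi}{3} \;=\; \sum_{j=0}^N (p_i - p_j)\, L_j \;=\; p_i\, P(C) \,-\, \sum_{j=0}^N p_j L_j
\]
and using $p_0 = 0$, I would move the remaining $p_j L_j$ contributions across to obtain
\[
p_i\, P(C) \;=\; \frac{(6-n)\pi}{3} \,+\, \sum_{\substack{j=1 \\ j\ne i}}^N p_j L_j.
\]
Only the indices $j$ with $L_j > 0$ contribute to this last sum, and by definition these correspond precisely to the bounded regions adjacent to $C$, each of whose pressure is at least $p_{\mathrm{min}}$. Since $\sum_{j\ne 0,i} L_j = P(C) - \ell$, I obtain
\[
p_i\, P(C) \;\ge\; \frac{(6-n)\pi}{3} \,+\, p_{\mathrm{min}}\bigl(P(C) - \ell\bigr),
\]
and dividing through by $P(C) > 0$ yields the first inequality in the statement.

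The second inequality is then automatic: by Proposition~\ref{prop:weak_minimizers} every pressure of a weak minimizer is nonnegative, so $p_{\mathrm{min}} \ge 0$, while $1 - \ell/P(C) \ge 0$ trivially. No real obstacle is anticipated here, since the argument is essentially an algebraic reorganization of the turning angle identity together with the sign information on pressures provided by weak minimality; the only minor point deserving a one-line justification is the identity $\sum_{j=0}^N L_j = P(C)$, which is where the assumption that $C$ is a connected component (so that no internal edges of $C$ exist) enters.
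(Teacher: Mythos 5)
Your proof is correct and follows essentially the same route as the paper's: apply the turning angle formula of Lemma~\ref{lem:turning_angle} to the component $C$, use $p_0 = 0$, bound each $p_j$ from below by $p_{\mathrm{min}}$ for the bounded adjacent regions, and rearrange using $\sum_j L_j = P(C)$ and $\sum_{j\ne 0} L_j = P(C)-\ell$. The only cosmetic difference is that you solve for $p_i$ and then estimate, while the paper estimates the sum in place and then divides; the algebra is identical.
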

\begin{proof}
  By Lemma~\ref{lem:turning_angle} we have
  \begin{align*}
  \frac{(6-n)\pi}{3}
  &= \sum_j (p_i - p_j) L_j
  = p_i \sum_j L_j - \sum_{j\neq 0} p_j L_j\\
  &\le p_i \sum_j L_j  - p_{\mathrm{min}} \sum_{j\neq 0}  L_j
  = p_i P(C) - p_{\mathrm{min}} (P(C)-\ell)
  \end{align*}
  where the sum in $j$ is extended to the regions $E_j$ which are
  adjacent to $C$. The first estimate of the statement follows.

  To get the second estimate recall that $p_{\mathrm{min}}\ge 0$ in view of Proposition~\ref{prop:weak_minimizers}.
\end{proof}

\begin{proposition}[variation III]\label{prop:pressure_length}
  Let $\E \in \M^*(\boldsymbol a)$ be a cluster and let $B$ and $C$ be two
  different components of the same bounded region $E_i$ of $\E$. Let
  $p_i$ be the pressure of $E_i$.
  Suppose that $B$ is external and let $L$ be the length of the
  external arc of $B$ and $n$ be the number of different regions
  which are adjacent to $C$.
  Then
  \[
  p_i \ge \frac{P(C)}{n\, m(C)} - \frac{2}{L}
  \ge \frac{2 \sqrt{\pi}}{n\sqrt{m(C)}} - \frac{2}{L}.
  \]

\end{proposition}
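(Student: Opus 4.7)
The plan is to build a weak competitor $\E'$ to $\E$ by transferring the component $C$ into a well-chosen neighbor and compensating the area lost by $E_i$ through enlarging $B$ at its external arc; the inequality $P(\E')\ge P(\E)$ will then give the estimate. First, by the pigeonhole principle there is a region $E_k$ (with $k$ possibly $0$) adjacent to $C$ such that the total length $\ell_k$ of the edges shared between $C$ and $E_k$ satisfies $\ell_k\ge P(C)/n$. I set $E_i'=E_i\setminus C$ and $E_k'=E_k\cup C$, which removes from the cluster exactly the shared edges of total length $\ell_k$, while the other edges of $C$ are inherited by the enlarged $E_k$ with no change in length.

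Next, I push the external arc of $B$---a circular arc of length $L$ with curvature $p_i$ whose osculating center lies on the $B$-side---outward by a parallel displacement of distance $t>0$. This gives a concentric arc of length $L(1+tp_i)$ whose endpoints $P_1',P_2'$ are obtained from the original vertices $P_1,P_2$ by a translation of length $t$ along the outward normal, and I connect each $P_j'$ to $P_j$ by a straight segment of length $t$. The region added to $B$ is the annular sector between the two arcs and the two radial segments, of area $tL+\tfrac{1}{2}t^2Lp_i$. Accounting for the removed shared edges, the removed old arc, and the added new arc plus two radial segments, the perimeter change is
\[
P(\E')-P(\E) \;=\; -\ell_k + tLp_i + 2t \;=\; -\ell_k + t(Lp_i+2).
\]
I pick $t=t^*$ so that $t^*L+\tfrac{1}{2}(t^*)^2Lp_i=m(C)$, which yields $m(E_i')=m(E_i)\ge a_i$ and hence $\E'\in\mathcal C^*(\boldsymbol a)$. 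Since the quadratic term is non-negative, $t^*L\le m(C)$, so $t^*\le m(C)/L$.

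Weak minimality of $\E$ then yields
\[
\ell_k \;\le\; t^*(Lp_i+2) \;\le\; \frac{m(C)}{L}(Lp_i+2) \;=\; m(C)\Bigl(p_i+\frac{2}{L}\Bigr),
\]
and combining with $\ell_k\ge P(C)/n$ proves $p_i\ge P(C)/(nm(C))-2/L$. The second inequality in the statement follows immediately from the isoperimetric inequality $P(C)\ge 2\sqrt{\pi m(C)}$ applied to the component $C$. The main obstacle I anticipate is verifying that the parallel displacement is geometrically admissible at $t=t^*$, i.e.\ that the pushed arc and the two radial segments remain in $E_0$ without meeting other parts of the cluster. This is clear when $m(C)$ is small enough (and precisely in the regime where the estimate is non-trivial, since for large $t^*$ the right-hand side becomes negative and the bound reduces to $p_i\ge 0$, which is automatic); in general one can avoid the issue by passing to the infinitesimal version as $t\to 0^+$, from which the marginal cost $p_i+2/L$ per unit area for enlarging $B$ through its external arc is directly read off.
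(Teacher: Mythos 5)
Your construction is essentially the same as the paper's: pick the neighbor $E_k$ with $\ell_k\ge P(C)/n$, hand $C$ to it, and compensate the lost area of $E_i$ by thickening $B$ along its external arc with a concentric strip, whose area is $Lh+Lh^2/(2R)$ and whose perimeter cost is $h(Lp_i+2)$. The paper simply fixes $h=m(C)/L$ (overshooting the area slightly, which is harmless for a weak competitor in $\mathcal C^*(\boldsymbol a)$) instead of solving for your $t^*$, and it settles your admissibility worry with the observation you allude to -- the external arcs are convex and meet at $120$ degrees, so the strip lies in $E_0$ -- rather than via an infinitesimal limit, which would not in any case suffice here since the finite area $m(C)$ must be compensated all at once.
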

\begin{proof}
  Suppose $i=1$ and consider all
  the regions which are adjacent to $C$. Suppose that $E_2$ is
  the region
  whose edges in common with $C$ have largest total
  length. Let $\ell$ be such total length in common between $C$ and $E_2$:
  we have that $n\ell \ge P(C)$.

  Let $\gamma$ be the external edge of $B$ and let $v$ and $w$ be its
  vertices. The arc $\gamma$ has
  radius $R = 1/p_1$, length $L$ and spans an angle $\theta = L/R$.
  Given $h>0$ we are going to modify $B$ by
  increasing the radius $R$ up to $R+h$. Just consider the two radii
  in $v$ and $w$: extend them of a length $h$ and join them with a
  parallel arc of radius $R+h$. Let $D$ be the strip
  between these two parallel arcs. We have $m(D) = ((R+h)^2 -
  R^2)\theta/2 = Lh + Lh^2/(2R)\ge Lh$.
  It is easy to see that $D\subset E_0$ (since all the external
  arcs are convex and meet at angles of 120 degrees). Fix $h = m(C) /
  L$ and consider the following variation:
  \[
  E_1' = (E_1 \setminus C) \cup D, \qquad
  E_2' = E_2 \cup C.
  \]
  If we let $\E' = (E_1', E_2', E_3, \dots, E_N)$ we notice that
  $m(E_1')\ge m(E_1)$ (since $m(D)\ge Lh = m(C)$) so
  $\E'\in  \mathcal C^*(\boldsymbol a)$. Moreover, in computing the perimeter of
  $\E'$ the edges in common between $C$ and $E_2$ have been removed so
  we gain $\ell$ while the arc of length $L$ has increased to length
  $2h + L(R+h)/R$ and so we have, by the minimality of $\E$:
  \[
  0
  \le P(\E') - P(\E)
  \le - \ell + 2h + L\frac{R+h}{R} - L
  = m(C)\left(\frac{1}{R} + \frac{2}{L}\right)- \ell
  \]
  To obtain the statement just remember that $1/R = p_1$
  and remember that $\ell \ge P(C)/n$.
\end{proof}

%% \begin{lemma}[arc geometry]\label{lemma:arc}
%% [PER ORA NON UTILIZZATO]  Let us consider the geometric figure composed by an arc and its
%%   chord segment. Let $\ell$ be the length of the chord,
%%   $L$ be the length of the arc,
%%   $R$ be the radius of the arc, $A$ be the area
%%   between the arc and the segment, $\alpha$ be the angle between the
%%   arc and the segment. Then, the following relations hold:
%%   \begin{align*}
%%     A &= \ell^2\cdot \frac{\alpha - \sin \alpha\cos \alpha}{4\sin^2
%%       \alpha}\\
%%     R &= \frac{\ell}{2\sin \alpha}\\
%%     L&=\ell\cdot\frac{\alpha}{\sin{\alpha}}
%%   \end{align*}
%% \end{lemma}

\begin{lemma}\label{lem:double_bubble}
  Let $\E\in \mathcal C(a_1,a_2)$ be a connected stationary cluster (a
  double bubble) with $a_1 \ge a_2$.
  Then the pressures $p_1$, $p_2$ satisfy the
  following relations
  \[
  \frac{k_8}{\sqrt{a_1}} \le p_1 \le p_2 \le \frac{k_8}{\sqrt{a_2}}
  \]
  with
  \[
  k_8 \defeq \sqrt{\frac{2\pi}{3}+\frac{\sqrt 3}{4}},
  \qquad
    1.5897 < k_8 <  1.5898.
  \]
\end{lemma}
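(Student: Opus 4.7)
The middle inequality $p_1 \le p_2$ follows immediately from Theorem~\ref{thm:double_bubble_monotone}: since $a_1 \ge a_2$, the external radii satisfy $r_1 \ge r_2$, hence $p_1 = 1/r_1 \le 1/r_2 = p_2$.

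For the outer inequalities I would use the standard geometric parametrization of the double bubble. Let $V_1, V_2$ be the two vertices, let $2\alpha_1, 2\alpha_2$ be the central angles of the external arcs of $E_1, E_2$, and let $2\alpha_{12}$ be the central angle of the internal arc (with the convention $\alpha_{12} = 0$ when $p_1 = p_2$, so the internal edge degenerates to a straight segment). Applying the turning angle identity (Lemma~\ref{lem:turning_angle}) to $E_1$, which has $n = 2$ edges with $p_1 L_1 = 2\alpha_1$ and $(p_2 - p_1)L_{12} = 2\alpha_{12}$, gives
\[
\alpha_1 - \alpha_{12} = \frac{2\pi}{3};
\]
applying it to $E_2$ yields $\alpha_2 + \alpha_{12} = 2\pi/3$. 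In particular $\alpha_1 \ge 2\pi/3 \ge \alpha_2$, with equality iff the cluster is symmetric.

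Since the three arcs share the common chord $V_1V_2$ of length $2\sin\alpha_i/p_i = 2\sin\alpha_{12}/(p_2-p_1)$, I would decompose the areas by signed circular segments: the internal arc bulges away from the higher-pressure $E_2$ into $E_1$, so
\[
a_1 = A_1^{\mathrm{ext}} - A^{\mathrm{int}}, \qquad a_2 = A_2^{\mathrm{ext}} + A^{\mathrm{int}},
\]
where each circular segment has area $(2\alpha - \sin 2\alpha)/(2p^2)$ in terms of its half-angle and curvature. Using the chord identity to eliminate $p_2 - p_1$, both $p_1^2 a_1$ and $p_2^2 a_2$ become explicit functions of the single parameter $\beta := \alpha_{12} \in [0, \pi/3)$ (with $\alpha_1 = \beta + 2\pi/3$ and $\alpha_2 = 2\pi/3 - \beta$). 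At $\beta = 0$ one has the symmetric bubble, where each region is precisely a circular segment of central angle $4\pi/3$, giving $p_i^2 a_i = 2\pi/3 + \sqrt{3}/4 = k_8^2$.

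The remaining step, and the main obstacle, is to verify that $\beta \mapsto p_1^2 a_1$ is non-decreasing and $\beta \mapsto p_2^2 a_2$ is non-increasing on $[0, \pi/3)$. I would do this by checking the sign of the first derivative on the whole interval; the expansion $(2\beta - \sin 2\beta)/\sin^2\beta = \tfrac{2}{3}\beta + O(\beta^3)$ as $\beta \to 0$ shows that the $A^{\mathrm{int}}$ contribution is regular at the symmetric limit, so both functions are smooth there and the required monotonicity reduces to an elementary but somewhat tedious trigonometric inequality in one variable, which can be confirmed by checking that the derivative of $p_1^2 a_1 - p_2^2 a_2$ and of each factor separately does not change sign on $[0, \pi/3)$.
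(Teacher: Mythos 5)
Your setup for the outer inequalities is correct as far as it goes: the turning-angle computation giving $\alpha_1=\tfrac{2\pi}{3}+\beta$, $\alpha_2=\tfrac{2\pi}{3}-\beta$ with $\beta=\alpha_{12}$, the common-chord relation, and the evaluation $p_i^2a_i=\tfrac{2\pi}{3}+\tfrac{\sqrt{3}}{4}=k_8^2$ at the symmetric configuration all check out. But the proof has a genuine gap: the entire content of the two outer inequalities is the claim that $\beta\mapsto p_1^2a_1$ is non-decreasing and $\beta\mapsto p_2^2a_2$ is non-increasing on $[0,\pi/3)$, and you do not prove it --- you only assert that it ``reduces to an elementary but somewhat tedious trigonometric inequality'' which ``can be confirmed'' by a sign check of a derivative. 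Until that one-variable inequality is actually carried out, nothing beyond the symmetric case has been established, and the asserted monotonicity is exactly where all the work lies.

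The paper sidesteps this computation entirely by using Theorem~\ref{thm:double_bubble_monotone}, which is already available: after computing (as you do) that the symmetric bubble of radius $r$ has $a_1=a_2=k_8^2r^2$, compare the given bubble of radii $r_1\ge r_2$ with the symmetric bubble of radii $(r_1,r_1)$ --- shrinking the second radius from $r_1$ to $r_2$ increases the area of $E_1$, so $a_1\ge k_8^2r_1^2$, i.e.\ $p_1=1/r_1\ge k_8/\sqrt{a_1}$ --- and with the symmetric bubble of radii $(r_2,r_2)$, where growing the first radius decreases the area of $E_2$, so $a_2\le k_8^2r_2^2$, i.e.\ $p_2\le k_8/\sqrt{a_2}$. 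Your deferred monotonicity statement is essentially equivalent to this (your $\beta$ is a monotone function of $r_2/r_1$, and $p_i^2a_i$ is scale-invariant), so the cleanest repair is to replace the unexecuted trigonometric verification by this two-line appeal to the monotonicity theorem; otherwise you must actually perform the derivative analysis on $[0,\pi/3)$.
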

\begin{proof}
  By Theorem~\ref{thm:double_bubble_monotone} we know that the
  external radii $r_1,r_2$ and areas $a_1,a_2$ of a double bubble
  are in one-to-one correspondence. Moreover we know that when
  $r_1=r_2$ we have $a_1=a_2$ because the resulting double bubble is
  symmetric and a direct computation gives $a_1 = a_2 = k_8^2 r^2$
  (with $r=r_1=r_2$).
  Hence, by the monotonicity proven in
  Theorem~\ref{thm:double_bubble_monotone}, since we 
  have $a_1\ge a_2$ by assumption, we know that $r_1 \ge r_2$ and
  hence $p_1 \le p_2$ (remember that $p_i = 1/r_i$).
  Hence monotonicity gives also:
  \[
  a_1 \ge k_8^2 r_1^2,\qquad
  a_2 \le k_8^2 r_2^2
  \]
  whence
  \[
    p_1 = \frac{1}{r_1} \ge \frac{k_8}{\sqrt{a_1}},\qquad
    p_2 = \frac{1}{r_2} \le \frac{k_8}{\sqrt{a_2}}.
  \]
\end{proof}

\begin{lemma}[reduction to double-bubble]\label{lemma:double_bubble_pressure}
  Let $\E=(E_1,\dots, E_N)$ be a stationary cluster which is reducible
  to a double
  bubble $(E'_i,E'_j)$ by subsequent removal of triangular components
  where $E'_i\supset E_i$, $E_j'\supset E_j$,
  $E'_i\subset\RR^2\setminus(E_0\cup E_j)$ and $E'_j\subset\RR^2\setminus(E_0\cup E_i)$.
  Let $\boldsymbol a = \boldsymbol m(\E)$ and $a=\sum_{k=1}^N a_k$.

  Then
  \[
  \frac{k_8}{\sqrt{\max\{a-a_i,a-a_j\}}} \le
  \min\{p_i,p_j\}
  \le \max\{p_i,p_j\}
  \le \frac{k_8}{\sqrt{\min\{a_i,a_j\}}}.
  \]
\end{lemma}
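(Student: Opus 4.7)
The plan is to exploit two facts: Theorem~\ref{thm:triangle_remove} tells us that the pressures $p_i, p_j$ are invariant under successive removal of triangular components (provided the regions $E_i, E_j$ themselves are never the ones that disappear), and Lemma~\ref{lem:double_bubble} pins down pressures in a double bubble in terms of its areas. So the idea is simply to push these two results together: reduce $\E$ to the double bubble $(E'_i, E'_j)$, apply Lemma~\ref{lem:double_bubble} to the reduced cluster, and then translate the area bounds back to quantities involving $a_i, a_j, a$.

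Concretely, I would first set $a'_i = m(E'_i)$ and $a'_j = m(E'_j)$. The hypothesis $E'_i \supset E_i$ gives $a'_i \ge a_i$ and similarly $a'_j \ge a_j$. Since each triangle removal either redistributes mass among bounded regions (internal case) or transfers some mass to $E_0$ (external case), the total area of the bounded regions is non-increasing during the reduction; as the only bounded regions at the end are $E'_i$ and $E'_j$, this yields $a'_i + a'_j \le a$. Combining this with $a'_j \ge a_j$ gives $a'_i \le a - a_j$, and symmetrically $a'_j \le a - a_i$.

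Now, by the pressure-preserving clause of Theorem~\ref{thm:triangle_remove}, the pressures of $E'_i, E'_j$ in the reduced double bubble equal $p_i, p_j$. By symmetry of the statement in $(i,j)$, I would assume without loss of generality $a'_i \ge a'_j$, so that Lemma~\ref{lem:double_bubble} delivers
\[
\frac{k_8}{\sqrt{a'_i}} \le p_i \le p_j \le \frac{k_8}{\sqrt{a'_j}},
\]
identifying $\min\{p_i,p_j\} = p_i$ and $\max\{p_i,p_j\}=p_j$. The upper bound follows from $a'_j \ge a_j \ge \min\{a_i,a_j\}$; the lower bound follows from $a'_i \le a - a_j \le \max\{a-a_i, a-a_j\}$.

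The only real subtlety is the inequality $a'_i + a'_j \le a$, which depends on knowing that triangle removal cannot transfer area from $E_0$ into a bounded region. This is where the hypothesis $E'_i, E'_j \subset \RR^2 \setminus E_0$ does work: it guarantees that the portion of the plane originally occupied by $E_0$ is never absorbed, so mass only ever flows outward to $E_0$. Once this bookkeeping is in hand, the argument is essentially a one-line application of Lemma~\ref{lem:double_bubble} to the reduced bubble.
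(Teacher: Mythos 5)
Your proof is correct and follows essentially the same route as the paper: preserve the pressures via Theorem~\ref{thm:triangle_remove}, apply Lemma~\ref{lem:double_bubble} to the reduced double bubble, and translate the area bounds $a_k\le m(E'_k)\le a-a_{k'}$. The only difference is that you derive $m(E'_i)\le a-a_j$ by a mass-bookkeeping argument over the successive removals, whereas the paper reads it off directly from the stated hypothesis $E'_i\subset\RR^2\setminus(E_0\cup E_j)$, whose complement of $E_0\cup E_j$ has measure exactly $a-a_j$.
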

\begin{proof}
  By Theorem~\ref{thm:triangle_remove} we know that the pressures
  of the double bubble are equal to the corresponding pressures of the
  cluster $\E$.
  Also notice that, for $k=i,j$ one has $m(E'_k)\ge m(E_k) = a_k$ $(k=i,j)$,
  while $m(E'_i)\leq m(\RR^2\setminus(E_0\cup E_j))=a-a_j$
   and $m(E'_j)\leq m(\RR^2\setminus(E_0\cup E_i))=a-a_i$ so, by
  Theorem~\ref{thm:double_bubble_monotone} we obtain the desired result.
\end{proof}

\begin{lemma}[perimeter of triple bubble]\label{lemma:ptb}
  One has
  \[
  p^*(1,1,1) = 6\sqrt{\frac{\pi}2 + \frac{1}{\sqrt 3}} \ge k_{10}
  \defeq 8.7939
  \]
\end{lemma}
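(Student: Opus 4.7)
The plan is to compute the perimeter of the (unique) minimizer of the strong problem $p(1,1,1)$ explicitly, and then to argue that $p^*(1,1,1)$ coincides with it. By the theorems of Wichiramala~\cite{W} and Montesinos~\cite{MM} cited in the introduction, the strong minimizer for three equal areas is the unique (up to isometry) standard triple bubble, which must therefore carry full $3$-fold rotational symmetry. To pass from the strong to the weak problem I would use that $p^*$ is nondecreasing in each $a_i$ and $2$-homogeneous: for any $\E\in\M^*(1,1,1)$ with measures $\boldsymbol a^*\ge(1,1,1)$, rescaling by $1/\sqrt{\min_i a_i^*}\le 1$ and invoking monotonicity forces $\min_i a_i^*=1$, so the weak minimum is in fact realized with all three measures equal to $1$ and $p^*(1,1,1)=p(1,1,1)$.

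Next I would describe the symmetric triple bubble geometrically. Since the three interior pressures are equal by symmetry, each interior edge has curvature $|p-p|=0$ and is a straight segment; by the $120$-degree condition at vertices, the three interior segments meet at a common central triple point $v_*$ and extend outward to three boundary triple points $v_{12},v_{23},v_{31}$ forming an equilateral triangle centered at $v_*$ with centroid-to-vertex distance $s$. Each external arc has endpoints at two consecutive $v_{ij},v_{ik}$, and the tangent to it at $v_{ij}$ makes a $120$-degree angle with the interior segment from $v_{ij}$ to $v_*$; since the median $v_{ij}v_*$ in an equilateral triangle bisects the $60$-degree vertex angle, the chord $v_{ij}v_{ik}$ makes a $30$-degree angle with that median, so the tangent is perpendicular to the chord. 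By the inscribed angle theorem each external arc is therefore a semicircle of diameter $s\sqrt{3}$, hence of radius $R=s\sqrt{3}/2$ and length $L=\pi s\sqrt{3}/2$.

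Each interior region decomposes as an isoceles triangle with two sides of length $s$ and included angle $120$ degrees (area $s^2\sqrt{3}/4$) plus the semicircular cap sitting on the chord $v_{ij}v_{ik}$ (area $3\pi s^2/8$), so the unit-area condition gives $s^2=8/(2\sqrt{3}+3\pi)$. The total perimeter of the cluster is
\[
P=3s+3L=\frac{3s\,(2+\pi\sqrt{3})}{2},
\]
and using the factorization $2\sqrt{3}+3\pi=\sqrt{3}\,(2+\pi\sqrt{3})$ one obtains
\[
P^2=\frac{9\,s^2\,(2+\pi\sqrt{3})^2}{4}=\frac{18\,(2+\pi\sqrt{3})}{\sqrt{3}}=12\sqrt{3}+18\pi=36\!\left(\frac{\pi}{2}+\frac{1}{\sqrt{3}}\right),
\]
which is exactly the stated formula $P=6\sqrt{\pi/2+1/\sqrt{3}}$. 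A direct numerical evaluation gives $P>8.7939=k_{10}$.

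The main obstacle is Step~1, reducing the weak infimum $p^*(1,1,1)$ to the strong one $p(1,1,1)$, which requires the rescaling and monotonicity argument rather than a direct comparison; once this reduction is in place the rest is routine. The key geometric observation is the identification of the external arcs as semicircles, which makes both the unit-area equation and the perimeter expression short enough to simplify by hand to the closed form $6\sqrt{\pi/2+1/\sqrt{3}}$.
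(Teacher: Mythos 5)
Your proposal is correct and takes essentially the same route as the paper: the paper's proof simply quotes Wichiramala~\cite{W} for the fact that every element of $\M^*(1,1,1)=\M(1,1,1)$ is the standard triple bubble in which each region is a half--disc glued to an isosceles triangle with two $30^\circ$ angles, and then computes area and perimeter exactly as you do (your parameter $s$ and the paper's semicircle radius $r$ are related by $2r=s\sqrt3$, and your closed-form simplification to $6\sqrt{\pi/2+1/\sqrt3}$ checks out). Your derivation of the semicircle property from stationarity and symmetry is a detail the paper leaves to the citation, and is a welcome addition. One caveat: your scaling argument for the weak-to-strong reduction only shows $\min_i a_i^*=1$, and the ``so all three measures equal $1$'' does not follow from that alone; the identity $\M^*(1,1,1)=\M(1,1,1)$ that you actually need is exactly what the paper imports from~\cite{W}, so you should either lean on that citation outright or supplement the scaling argument (e.g.\ via Proposition~\ref{prop:weak_minimizers} and the classification of strong minimizers) rather than present it as a complete proof.
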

\begin{proof}
From~\cite{W} we know that each $\E \in \M^*(1,1,1) = \M(1,1,1)$ is a standard
triple bubble where each region $E_i$
is composed by the union of an
half-circle and an isosceles triangle with two angles of $30$
degrees.
If $r$ is the radius of the half circles, the
area of each region turns out to be
 \[
 1 = \left(\frac{\pi}{2} +\frac{1}{\sqrt 3}\right) r^2
 \]
 while the perimeter is given by
 \[
 P(\E) = (3\pi + 2\sqrt{3}) r = 6 \left(\frac \pi 2 + \frac{1}{\sqrt
   3}\right) r = 6 \sqrt{\frac \pi 2 + \frac{1}{\sqrt 3}}.
 \]
\end{proof}

\section{Estimates on $\M^*(1,1,1,1)$}\label{sec:1111}

\begin{proposition}[the competitor]\label{prop:k_0}
  We have
  $p^*(1,1,1,1) \le k_0 \defeq 11.1962$.
\end{proposition}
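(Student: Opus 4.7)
The plan is to establish the upper bound by exhibiting an explicit cluster $\E\in\mathcal C^*(1,1,1,1)$ with $P(\E)\le k_0$. The natural candidate is the symmetric \emph{sandwich} shown on the right of Figure~\ref{sandwich_flower}: a connected stationary cluster with Klein four-group symmetry, in which $E_1,E_2$ are congruent by reflection in the horizontal axis, $E_3,E_4$ are congruent by reflection in the vertical axis, and the two pairs are not a priori congruent to one another. By Definition~\ref{def:stationary} all nine edges are arcs of circles meeting in triples at $120^\circ$, so after fixing the symmetry the configuration is determined up to isometry by just two real parameters, which I would take to be the pressure $p$ of the ``middle'' regions $E_1,E_2$ and the pressure $q$ of the ``side'' regions $E_3,E_4$.

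First I would parametrize the sandwich in coordinates. Placing the horizontal axis of symmetry of $E_1,E_2$ along the $x$-axis and the two on-axis vertices at $(\pm d,0)$, each edge is described by its endpoints, its orientation, and its signed curvature, where the curvature of the arc between $E_i$ and $E_j$ is $|p_i-p_j|$ (with $p_0=0$) and the convexity points toward the higher-pressure side. The $120^\circ$ incidence rule at the six vertices is compatible with the symmetry and the pressure relations, and together with the pressure differences it fixes all geometric quantities in terms of $p$ and $q$. Imposing the two independent area equations $m(E_1)=1$ and $m(E_3)=1$ (the remaining two follow by symmetry) yields a transcendental system with a unique solution $(p,q)$.

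Once the geometry is determined, the perimeter is most conveniently computed via the pressure formula of Theorem~\ref{thm:pressure_formula}, which in this symmetric setting gives
\[
P(\E)=2\sum_{i=1}^{4}p_i\,m(E_i)=4(p+q),
\]
and a direct numerical evaluation of $p+q$ yields $P(\E)\le 11.1962$.

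The main obstacle is purely the geometric bookkeeping: writing down all nine arcs so that they close up consistently under the $120^\circ$ and pressure-difference constraints, and then setting up the two (transcendental) area integrals in closed form as functions of $(p,q)$. Once these integrals are available, solving the system numerically and evaluating $4(p+q)$ is routine. Note also that no stationarity or optimality of the constructed sandwich is actually required: since we only need an \emph{upper} bound on $p^*$, any explicit symmetric sandwich with areas $\ge(1,1,1,1)$ and perimeter at most $k_0$ would work, so in a pinch one may even replace the stationary symmetric sandwich by a slightly perturbed, easier-to-describe cluster without affecting the argument.
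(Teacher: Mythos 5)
Your high-level plan --- exhibit an explicit sandwich-type competitor in $\mathcal C^*(1,1,1,1)$ and bound its perimeter --- is exactly what the paper does, but the paper takes a shortcut you anticipate only in your final paragraph. Rather than solving the transcendental stationarity system for the true symmetric sandwich and then invoking the pressure formula, the paper writes down a \emph{non-stationary} competitor parametrized by two lengths $x,y$ (with $R=2(x+y)/\sqrt 3$ forced by the $120^\circ$ incidence and the symmetry): the interior edges are straight segments, the outer boundary consists of circular arcs, and the areas and perimeter are then elementary mensuration,
\[
m(E_1)=m(E_2)=(2x+y)y\sqrt 3+\tfrac{\pi}{3}R^2-\tfrac{\sqrt 3}{4}R^2,
\qquad
m(E_3)=m(E_4)=\sqrt 3\,y^2+\tfrac{\pi}{2}(y\sqrt 3)^2,
\]
\[
P(\E)=2\cdot\tfrac{2\pi}{3}R+2\pi\sqrt 3\,y+2x+8y,
\]
and plugging in $x=0.2707$, $y=0.394$ gives all areas $>1$ and $P(\E)\approx 11.196\le k_0$. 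This sidesteps both of the nontrivial steps in your route: you do not need to set up or solve the stationary area equations in $(p,q)$, and you do not need the pressure formula at all. That last point is worth flagging because Theorem~\ref{thm:pressure_formula} as stated in the paper applies to clusters in $\M^*(\boldsymbol a)$, i.e.\ to weak \emph{minimizers}; you would be applying it to a stationary sandwich whose minimality is not yet established, so you would either need the more general version of the pressure formula for stationary clusters (true, but not what is cited here) or you would have to compute $P(\E)$ directly from the arc data anyway. Your observation that ``any explicit symmetric sandwich with areas $\ge(1,1,1,1)$ and perimeter at most $k_0$ would work'' is precisely the idea the paper executes; adopting it from the start would have saved you the transcendental bookkeeping.

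One small editorial note: the paper writes ``$P(\E)\ge k_0$'' at the end of its proof; from the logic and the numerics this must be a typo for ``$\le$'' (or ``$\approx$''), since $\E\in\mathcal C^*(1,1,1,1)$ gives $p^*(1,1,1,1)\le P(\E)$, and the conclusion requires $P(\E)\le k_0$.
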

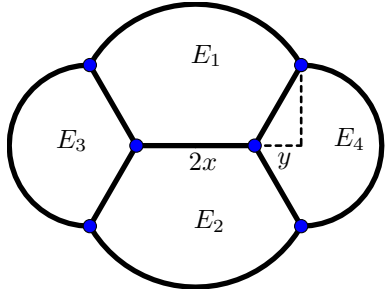
\begin{figure}
  \centering%\input{competitor.tikz}
  \begin{tikzpicture}[line cap=round,line join=round,>=triangle 45,x=0.6cm,y=0.6cm]
\clip(9.518675163482984,-1.0411441577114937) rectangle (17.88544801705282,5.296918320903997);
\draw [shift={(11.348778145295991,2.1339745962155616)},line width=2.pt]  plot[domain=1.5707963267948961:4.71238898038469,variable=\t]({1.*1.786128646222756*cos(\t r)+0.*1.786128646222756*sin(\t r)},{0.*1.786128646222756*cos(\t r)+1.*1.786128646222756*sin(\t r)});
\draw [shift={(16.031221854704015,2.133974596215561)},line width=2.pt]  plot[domain=-1.5707963267948966:1.5707963267948966,variable=\t]({1.*1.7861286462227561*cos(\t r)+0.*1.7861286462227561*sin(\t r)},{0.*1.7861286462227561*cos(\t r)+1.*1.7861286462227561*sin(\t r)});
\draw [line width=2.pt] (12.38,2.1339745962155607)-- (11.348778145295991,3.9201032424383175);
\draw [line width=2.pt] (12.38,2.1339745962155607)-- (11.34877814529599,0.34784594999280527);
\draw [line width=2.pt] (15.,2.1339745962155607)-- (16.031221854704015,3.920103242438317);
\draw [line width=2.pt] (15.,2.1339745962155607)-- (16.031221854704015,0.347845949992805);
\draw [shift={(13.69,2.568398174392321)},line width=2.pt]  plot[domain=0.5235987755982988:2.617993877991494,variable=\t]({1.*2.7034101360919918*cos(\t r)+0.*2.7034101360919918*sin(\t r)},{0.*2.7034101360919918*cos(\t r)+1.*2.7034101360919918*sin(\t r)});
\draw [shift={(13.69,1.6995510180388012)},line width=2.pt]  plot[domain=0.5235987755982988:2.617993877991494,variable=\t]({1.*2.7034101360919918*cos(\t r)+0.*2.7034101360919918*sin(\t r)},{0.*2.7034101360919918*cos(\t r)+-1.*2.7034101360919918*sin(\t r)});
\draw (13.37720273937652,4.580902894449528) node[anchor=north west] {${{E_1}}$};
\draw (13.443500464048231,0.9212684925711269) node[anchor=north west] {${{E_2}}$};
\draw (10.393805129149559,2.698047513772959) node[anchor=north west] {${{E_3}}$};
\draw (16.546233978684274,2.737826148575985) node[anchor=north west] {${{E_4}}$};
\draw [line width=1.pt,dash pattern=on 2pt off 2pt] (15.,2.1339745962155607)-- (16.031221854704015,2.133974596215561);
\draw [line width=1.pt,dash pattern=on 2pt off 2pt] (16.031221854704015,3.920103242438317)-- (16.031221854704015,2.133974596215561);
\draw (15.29983675485612,2.18) node[anchor=north west] {${{y}}$};
\draw [line width=2.pt] (12.38,2.1339745962155607)-- (13.69,2.133974596215561);
\draw [line width=2.pt] (13.69,2.133974596215561)-- (15.,2.1339745962155607);
\draw (13.337424104573493,2.18) node[anchor=north west] {${{2x}}$};
\draw [fill=qqqqff] (12.38,2.1339745962155607) circle (2.5pt);
\draw [fill=qqqqff] (15.,2.1339745962155607) circle (2.5pt);
\draw [fill=qqqqff] (11.348778145295991,3.9201032424383175) circle (2.5pt);
\draw [fill=qqqqff] (11.34877814529599,0.34784594999280527) circle (2.5pt);
\draw [fill=qqqqff] (16.031221854704015,3.920103242438317) circle (2.5pt);
\draw [fill=qqqqff] (16.031221854704015,0.347845949992805) circle (2.5pt);
\end{tikzpicture}
  \caption{The competitor cluster defined in Proposition~\ref{prop:k_0}.}
  \label{fig:competitor}
\end{figure}

\begin{proof}
  Let
  $x \defeq 0.2707$,
  $y \defeq 0.394$
  and $R=2(x+y)/\sqrt{3}$. Consider the cluster represented in Figure~\ref{fig:competitor}.
  The area of the regions with four edges is given by:
  \[
  m(E_1) = m(E_2) = (2x+y) y \sqrt{3} + \frac{\pi}{3} R^2 - \frac{\sqrt 3}{4} R^2
  > 1
  \]
  while the area of the regions with three edges is:
  \[
  m(E_3) = m(E_4) = \sqrt{3} y^2 + \frac{\pi}{2} (y\sqrt{3})^2 > 1.
  \]
  So $\E\in\mathcal C^*(1,1,1,1)$. And we have
  \[
  P(\E) = 2 \frac{2\pi}{3} R + 2 \pi \sqrt{3} y + 2x + 8y
        \geq k_0.%%>
  \]
\end{proof}

\begin{proposition}\label{prop:k_2}
  Let $\E \in \M^*(1,1,1,1)$ and suppose that $C$ is a
  component of some region.
  Then:
  \[
  m(C) \ge k_2 \defeq 0.0244.
  \]
  Moreover, if the number of regions which
  have an edge in common with $C$ is not larger than $3$ one has
  \[
  m(C) \ge k_6 \defeq 0.0425.
  \]
\end{proposition}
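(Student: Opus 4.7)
The plan is to invoke Proposition~\ref{prop:varII}, whose estimate \eqref{eq:mC_low} bounds $m(C)$ from below in terms of the target area $a_i$, an upper bound $\lambda$ on $P(\E)$, and the number $r$ of regions adjacent to $C$. Since all target areas equal $1$ and Proposition~\ref{prop:k_0} gives $P(\E) = p^*(1,1,1,1) \le k_0 = 11.1962$, the admissible choice is $a_i = 1$ and $\lambda = k_0$.

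First I would dispose of the trivial case $m(C) = m(E_i)$: here $m(C) = m(E_i) \ge a_i = 1$, which beats both constants. Otherwise $0 < m(C) < m(E_i)$ and Proposition~\ref{prop:varII} applies to give
\[
m(C) \ge \frac{16\pi}{r^2 k_0^2}\left(1 - \frac{16\pi}{r^2 k_0^2}\right).
\]
Writing $s = r^2 k_0^2$, the function $s \mapsto (16\pi/s)(1 - 16\pi/s)$ is decreasing on $\{s \ge 32\pi\}$, and since $k_0^2 > 32\pi/9$ this is the relevant regime for every $r \ge 3$; consequently the right-hand side is monotone non-increasing in $r$, so replacing $r$ by an upper bound only weakens the inequality.

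For the unrestricted statement, $C$ is adjacent to at most the four remaining regions (including the external $E_0$), so $r \le 4$; plugging $r = 4$ into the displayed formula and evaluating numerically produces $m(C) \ge k_2 = 0.0244$. Under the extra hypothesis $r \le 3$ the same computation with $r = 3$ produces $m(C) \ge k_6 = 0.0425$. The proof is essentially a plug-in verification; the only point deserving a moment of attention is the monotonicity in $r$, which is elementary, so I foresee no substantial obstacle once Propositions~\ref{prop:varII} and~\ref{prop:k_0} are in hand.
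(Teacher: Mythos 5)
Your proof is correct and follows essentially the same route as the paper's: apply Proposition~\ref{prop:varII} with $a_i = 1$, $\lambda = k_0$ from Proposition~\ref{prop:k_0}, and $r \le 4$ (resp.\ $r \le 3$), then evaluate. The paper phrases the substitution slightly differently (writing $\lambda \defeq 4k_0$ to package $r\lambda$) but the arithmetic is identical, giving $\frac{\pi}{k_0^2}(1-\frac{\pi}{k_0^2}) \ge k_2$ and $\frac{16\pi}{9k_0^2}(1-\frac{16\pi}{9k_0^2}) \ge k_6$. Your explicit handling of the trivial case $C = E_i$ and the check that the lower-bound function is decreasing in $r$ (so one may safely plug in the upper bound on $r$) are details the paper leaves implicit but that tighten the argument; they do not change the method.
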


\begin{proof}
  We can apply Proposition~\ref{prop:varII} with
  $a_i=1$, $r\le 4$,
  $P(\E) \le  k_0$ so $r P(\E) \le \lambda \defeq 4 k_0$. We obtain:
  \[
  m(C)
  \ge \frac{\pi}{k_0^2}\left(1-\frac{\pi}{k_0^2}\right) \ge k_2.
  \]
  And with $r\le 3$ we would have
  \[
  m(C)
  \ge \frac{16 \pi}{9 k_0^2}\left(1-\frac{16 \pi}{9 k_0^2}\right) \ge k_6.
  \]
\end{proof}

\begin{proposition}\label{prop:k_1}
  Let $\E \in \M^*(1,1,1,1)$ be such that the region $E_1$ can be decomposed in two parts $E_1 = E_1' \cup C_1$ with
  \[
  m(E_1) = m(E_1') + m(C_1),\quad m(E_1') \ge m(C_1), \quad P(E_1) = P(E_1') + P(C_1)
  \]
  then
  \[
    m(C_1) \le k_1 \defeq  0.1605, \qquad
    P(C_1) \le k_7 \defeq  1.4199
  \]
\end{proposition}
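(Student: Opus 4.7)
The plan is to combine the upper bound $P(\E)\le k_0$ from Proposition~\ref{prop:k_0} with an isoperimetric lower bound on $P(\E)$ that explicitly isolates the contribution of $C_1$. The key observation is that the hypothesis $P(E_1)=P(E_1')+P(C_1)$ says $E_1'$ and $C_1$ share no boundary, so the classical planar isoperimetric inequality applies to each of them separately. Starting from $P(\E)=\tfrac12\sum_{k=0}^4 P(E_k)$, I split $P(E_1)$ as above and bound $P(E_0),P(E_1'),P(E_2),P(E_3),P(E_4)$ from below by isoperimetry, while keeping $P(C_1)$ as a free term. This yields
\[
 P(\E)\ge \sqrt\pi\!\left[\sqrt{\textstyle\sum_{k=1}^4 m(E_k)}+\sqrt{m(E_1')}+\sum_{k=2}^4\sqrt{m(E_k)}\right]+\tfrac12 P(C_1).
\]

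Each $\sqrt{\cdot}$ term on the right is increasing in its argument, so minimising under the constraints $m(E_k)\ge 1$ for $k=2,3,4$, $m(E_1')\ge m(C_1)$ and $m(E_1')+m(C_1)\ge 1$ is straightforward: the minimum is at $m(E_k)=1$ for $k\ne 1$ and $m(E_1')=\max(m(C_1),1-m(C_1))$. The case $m(C_1)>1/2$ is quickly ruled out, since already at $m(C_1)=1/2$ the bound together with $P(C_1)\ge 2\sqrt{\pi/2}$ gives $P(\E)\ge\sqrt\pi(5+\sqrt 2)>k_0$, contradicting Proposition~\ref{prop:k_0}. Hence $m(C_1)\le 1/2$, $m(E_1)=1$ at the minimum, and
\[
 k_0\ge \sqrt\pi\bigl[5+\sqrt{1-m(C_1)}\bigr]+\tfrac12 P(C_1).
\]

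Both target bounds follow from this single inequality. For $m(C_1)\le k_1$, I further invoke $P(C_1)\ge 2\sqrt{\pi m(C_1)}$ to obtain $\sqrt{m(C_1)}+\sqrt{1-m(C_1)}\le k_0/\sqrt\pi-5$; squaring reduces this to a quadratic in $m(C_1)$ whose smaller root is $k_1\approx 0.1605$. For $P(C_1)\le k_7$, I substitute $m(C_1)\le k_1$ (so that $\sqrt{1-m(C_1)}\ge\sqrt{1-k_1}$) into the displayed inequality and solve for $P(C_1)$, obtaining $P(C_1)\le 2k_0-2\sqrt\pi\bigl[5+\sqrt{1-k_1}\bigr]=k_7$. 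The only delicate point is numerical tightness: it is the careful choice of competitor behind $k_0$ (Proposition~\ref{prop:k_0}) that makes the constants $k_1$ and $k_7$ small enough to be useful, and the extremal case of the chain of estimates is $C_1$ and $E_1'$ being disjoint round disks of areas $k_1$ and $1-k_1$, which also explains the identity $k_7=2\sqrt{\pi k_1}$ at equality.
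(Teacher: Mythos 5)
Your proof is correct and follows essentially the same route as the paper: both bound $P(\E)$ below by applying the planar isoperimetric inequality to $E_1'$, $E_2$, $E_3$, $E_4$, $\RR^2\setminus E_0$ (and, for the area bound, also to $C_1$), then combine with $P(\E)\le k_0$ and the two inequalities $m(E_1')\ge m(C_1)$, $m(E_1')\ge 1-m(C_1)$ to deduce $m(C_1)\le 1/2$ and then $m(C_1)\le k_1$; the perimeter bound is obtained exactly as you do it, by isolating $P(C_1)$ and substituting $\sqrt{1-m(C_1)}\ge\sqrt{1-k_1}$. The only cosmetic differences are that the paper rules out $m(C_1)>1/2$ more directly via $2\sqrt{m}\le c_1<\sqrt 2$ and shows $f(k_1)>c_1$ by monotonicity rather than by discussing the roots of the associated quadratic.
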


\begin{proof}
  Let $m=m(C_1)$.
  By Lemma~\ref{lem:isop}, one has
  \[
  P(\E) \ge
  \sqrt{\pi}\left(\sqrt{4} + \sqrt{m} + \sqrt{m(E_1')} + 3 \sqrt{1}\right)
  = \sqrt{\pi}(\sqrt{m} + \sqrt{m(E_1')} + 5)
  \]
  whence
  \[
  \sqrt{m} + \sqrt{m(E_1')}
  \le \frac{P(\E)}{\sqrt{\pi}}-5
  \le \frac{k_0}{\sqrt{\pi}} - 5 \le c_1 \defeq 1.3168
  \]
  On one hand we have assumed that $m(E_1')\ge m(C_1)=m$, so
  $2\sqrt{m} \le c_1 < \sqrt 2$ which gives $m \le 1/2$.
  On the other hand we know that $m(E_1') = m(E_1)-m \ge 1-m$, whence
  \[
  \sqrt{m} + \sqrt{1-m} \le c_1.
  \]

  Now let $f(x) = \sqrt{x}+\sqrt{1-x}$.
  By computing the sign of $f'(x)$ we easily notice that $f(x)$
  is increasing for $x\in[0,1/2]$. By direct computation one checks
  that $f(k_1)>c_1$ (in fact $k_1$, which is defined in the statement of the
  theorem being proved, has been choosen to satisfy this relation).
  Since we know that $f(m)\leq c_1$ and $m\leq 1/2$ we
  conclude that $m = m(C_1)<k_1$.
  To get the estimate on the perimeter, we use again the isoperimetric
  inequality:
  \begin{align*}
  P(C_1) &= 2P(\E) - (P(E_1') + P(E_0) + \sum_{i=2}^4 P(E_i))\\
  &\le 2 k_0 - 2 \sqrt{\pi}(\sqrt{1-m(C_1)} + \sqrt{4} + 3 \sqrt{1})\\
  &\le 2 k_0 - 2 \sqrt{\pi}(\sqrt{1-k_1}+5) \le k_7.
  \end{align*}

  %% Solving in $m$ (recall that $m<1/2$) we find:
  %% \[
  %% m \le \frac{1-\sqrt{1-(c_1^2-1)^2}}{2} \le k_1
  %% \]
\end{proof}

\begin{definition}[big/small components]\label{def:big_small}
  Let $\E$ be a regular $N$-cluster.
  We say that a component $C$ of a region $E_i$ is \emph{small} if
  $m(C)\le m(E_i)/2$. Otherwise we say that $C$ is \emph{big}. Notice
  that at most one connected component of each region can be big.
  %% A component is said to be \emph{external} if it has at least one
  %% edge in common with the external region $E_0$. A component which is
  %% not \emph{external} is called \emph{internal}.
\end{definition}

\begin{corollary}\label{cor:one_big_component}
Let $\E\in \M^*(1,1,1,1)$. Then each region $E_i$ has exactly one big
connected component $E_i'$. Furthermore $m(E_i')\geq 1-k_1$,
where $k_1$ is the constant introduced in Proposition~\ref{prop:k_1}.
\end{corollary}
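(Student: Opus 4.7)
The plan is to combine a basic counting observation with repeated applications of Proposition~\ref{prop:k_1}. The at-most-one part is immediate: if two connected components of $E_i$ both had measure strictly greater than $m(E_i)/2$, their total measure would exceed $m(E_i)$. So I only need to produce a big component and then pin down its size.

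For existence, I would argue by contradiction, assuming every connected component $C_1,\dots,C_k$ of $E_i$ is small, so $m(C_j)\le m(E_i)/2$ for all $j$. Since distinct connected components have disjoint boundaries, any grouping of them into two sub-unions $A$ and $B$ automatically satisfies $P(E_i)=P(A)+P(B)$, so Proposition~\ref{prop:k_1} can be applied to any such decomposition with $m(A)\ge m(B)$. Taking $A=E_i\setminus C_j$ and $B=C_j$ for a single component yields $m(C_j)\le k_1$ for every $j$.

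Next I would produce a balanced partition by a discrete intermediate-value argument: adding the components one at a time, the partial sums $0=s_0<s_1<\dots<s_k=m(E_i)$ have consecutive differences at most $\max_j m(C_j)\le k_1$, so some index gives a subset $A\subset\{C_1,\dots,C_k\}$ with $|m(A)-m(E_i)/2|\le k_1/2$; taking $B$ to be its complement yields $|m(A)-m(B)|\le k_1$. Assuming $m(A)\ge m(B)$, this gives
\[
m(B)\ge \frac{m(E_i)-k_1}{2}\ge \frac{1-k_1}{2},
\]
where I used $m(E_i)\ge 1$ from $\E\in\mathcal C^*(1,1,1,1)$. Applying Proposition~\ref{prop:k_1} to the decomposition $E_i=A\cup B$ forces $m(B)\le k_1$; since $(1-k_1)/2>k_1$ (equivalently $k_1<1/3$, satisfied by $k_1=0.1605$), this is a contradiction.

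Thus a unique big connected component $E_i'$ exists. Applying Proposition~\ref{prop:k_1} one more time to the decomposition $E_i=E_i'\cup(E_i\setminus E_i')$, whose hypothesis $m(E_i')\ge m(E_i\setminus E_i')$ holds by definition of bigness, yields $m(E_i\setminus E_i')\le k_1$, and therefore $m(E_i')=m(E_i)-m(E_i\setminus E_i')\ge 1-k_1$. The main technical point is producing the balanced partition in the contradiction argument, which is a standard discrete IVT; everything else is a direct invocation of Proposition~\ref{prop:k_1}.
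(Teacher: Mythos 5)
Your proposal is correct and follows essentially the same route as the paper: argue by contradiction that all components are small, use a partial-sum argument to split $E_i$ into two sub-unions each of measure strictly greater than $k_1$, and contradict Proposition~\ref{prop:k_1}; the final bound $m(E_i')\ge 1-k_1$ is obtained exactly as in the paper. The only (immaterial) difference is how the splitting is selected — you take a balanced partition via a discrete intermediate-value argument and use $k_1<1/3$, whereas the paper takes the first partial sum exceeding $k_1$ and checks its complement also exceeds $k_1$ using $k_1<1/4$.
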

\begin{proof}
Let $E_i^1, \dots, E_i^M$ be the connected components of the region
$E_i$. Suppose by contradiction that all $E_i^j$ are small:
$m(E_i^j)\le m(E_i)/2$, $j=1,\dots,M$. Let $K$ be the smallest
index such that
\begin{equation}\label{eq:defK}
  \sum_{j=1}^K m(E_i^j) > k_1.
\end{equation}
%%where $k_1$ is the constant introduced in Proposition~\ref{prop:k_1}.
We claim that
\begin{equation}\label{eq:mEik1}
\sum_{j=1}^K m(E_i^j) < m(E_i) - k_1.
\end{equation}
Otherwise we would have (notice that $k_1<1/4$)
\begin{align*}
\sum_{j=1}^{K-1} m(E_i^j) & = \sum_{j=1}^K m(E_i^j) - m(E_i^K)
\ge m(E_i) - k_1 - m(E_i^K) \\
& \ge m(E_i) - k_1 - \frac{m(E_i)}{2}
\ge \frac{m(E_i)}{2} - k_1 \\
& \ge \frac 1 2 - k_1 > k_1
\end{align*}
which is a contradiction since $K$ was the minimal index satisfying
the inequality~\eqref{eq:defK}.
So, if we define
\[
E_i' = \bigcup_{j=1}^K E_i^j,
\qquad
E_i'' = E_i \setminus E_i'
\]
we have (by \eqref{eq:defK} and \eqref{eq:mEik1})
\[
m(E_i') > k_1,
\qquad
m(E_i'') > k_1.
\]
This is now a contradiction with Proposition~\ref{prop:k_1}, since the smaller
of the two components $E_i'$, $E_i''$ should have a measure smaller
than $k_1$.

Finally if $E_i'$ is the big connected component of the region $E_i$,
applying Proposition~\ref{prop:k_1} with $C_i = E_i\setminus E_i'$, we find
$m(E_i)\geq 1-k_1$.
\end{proof}

\begin{corollary}\label{cor:one_big_internal}
  Let $\E \in \M^*(1,1,1,1)$. Then at most one of the big
  components is internal.
\end{corollary}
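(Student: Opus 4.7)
The plan is to argue by contradiction: assume two big components, say $E_1'$ and $E_2'$, are both internal. By Corollary~\ref{cor:one_big_component} both have mass at least $1-k_1$, and both are simply connected by Proposition~\ref{prop:edges}. Since they do not touch $E_0$, the boundary $\partial E_0$ consists only of arcs shared with $E_3'$, $E_4'$, and small components, each of the latter having mass at most $k_1$ by Proposition~\ref{prop:k_1}.

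My first step would be to bound $P(E_0)$ from above using Proposition~\ref{prop:varI} applied to each component adjacent to $E_0$: the length of the arc of $\partial E_0$ lying on such a component $C$ is at most $2\sqrt{\pi m(C)}$. Summing this over all components touching $E_0$ gives
\[
P(E_0) \le 2\sqrt{\pi}\left(\sqrt{m(E_3')}+\sqrt{m(E_4')}+\sum_S\sqrt{m(S)}\right),
\]
where the sum runs over the small components adjacent to $E_0$. On the other hand, the planar isoperimetric inequality applied to $\RR^2\setminus E_0$ gives $P(E_0)\ge 2\sqrt{\pi\sum_i m(E_i)}\ge 4\sqrt{\pi}$. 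Combining these with the upper bound $P(\E)\le k_0$ from Proposition~\ref{prop:k_0} and the cluster isoperimetric Lemma~\ref{lem:isop} to control $\sum_i m(E_i)$, the objective is to derive a numerical inequality that fails.

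The main obstacle is that this circle-based isoperimetric bound may not be tight enough on its own when $E_3'$ or $E_4'$ have mass noticeably above one. My fallback would be to invoke Proposition~\ref{prop:edges} item 4 to bound the number of edges of the internal components $E_1'$ and $E_2'$ (forcing them, in most subcases, to be triangular), and then to apply Theorem~\ref{thm:triangle_remove} iteratively: removing a triangular internal component yields a smaller stationary cluster in which the other internal big component remains internal. Continuing this reduction, one reaches a stationary cluster with only three regions admitting an internal region, which is topologically impossible: by Proposition~\ref{prop:edges} item 2 any two simply connected components share at most a single edge, so the union of two of the remaining regions cannot enclose the third one.
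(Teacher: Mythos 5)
Your first route does not produce a contradiction. The upper bound $P(E_0)\le 2\sqrt{\pi}\bigl(\sqrt{m(E_3')}+\sqrt{m(E_4')}+\sum_S\sqrt{m(S)}\bigr)$ from Proposition~\ref{prop:varI} and the lower bound $P(E_0)\ge 4\sqrt{\pi}$ are perfectly compatible: already with $m(E_3')=m(E_4')=1$ and no small external components the two bounds coincide. The masses of $E_1'$ and $E_2'$ --- the quantities one must exploit --- enter nowhere in these two inequalities, and neither $P(\E)\le k_0$ nor Lemma~\ref{lem:isop} supplies the missing ingredient, because the additional perimeter forced by having two internal big components is simply not reflected in $P(E_0)$.

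Your fallback has independent gaps. Item 4 of Proposition~\ref{prop:edges} bounds the number of edges of an internal component by $M-k$, but at this stage of the paper there is no upper bound on the total number of bounded components $M$ (the bound $M\le 6$ is obtained only later, in Proposition~\ref{prop:two_small}), so triangularity of $E_1'$, $E_2'$ is not forced. More importantly, Theorem~\ref{thm:triangle_remove} only returns a \emph{stationary} cluster, not a weak minimizer, so after the removal one can no longer invoke results stated for $\M^*(\boldsymbol a)$ --- in particular Proposition~\ref{prop:edges} item 2, on which your concluding topological step relies.

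The paper's argument is a single observation that you set up but did not use: since $E_1'$ and $E_2'$ are internal, $\partial(E_1'\cup E_2')$ and $\partial E_0$ are disjoint subsets of $\partial\E$, hence $P(\E)\ge P(E_1'\cup E_2')+P(E_0)$. Applying the isoperimetric inequality to each term, with $m(E_1')+m(E_2')\ge 2(1-k_1)$ and $m(\RR^2\setminus E_0)\ge 4$, gives $P(\E)\ge 2\sqrt{\pi}\bigl(\sqrt{2(1-k_1)}+2\bigr)>k_0$, contradicting Proposition~\ref{prop:k_0}.
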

\begin{proof}
  Suppose by contradictions that two big components $E_i^1$ and
  $E_j^1$ are internal. Then by the isoperimetric inequality:
  \begin{align*}
    P(\E)
    &\ge P(E_i^1\cup E_i^2) + P(E_0) \\
    & \ge 2 \sqrt{\pi}\left(\sqrt{m(E_i^1) + m(E_i^2)} +
      \sqrt{m(E_1)+m(E_2)+m(E_3)+m(E_4)}\right)\\
      &  \ge 2 \sqrt{\pi}\left(\sqrt{2(1-k_1)}+\sqrt{4}\right)
      \ge 11.6831 > k_0 \geq p^*(1,1,1,1).
  \end{align*}
  Which is a contradiction.
\end{proof}

\begin{proposition}\label{prop:k_3}
  Let $\E \in \M^*(1,1,1,1)$ be such that both regions $E_i$ and $E_j$
  are disconnected ($i\neq j$). Then every small component $C$ of
  either $E_i$ or $E_j$ satisfies:
  \[
    m(C) \le k_3 \defeq 0.0408, \qquad
    P(C) \le k_9 \defeq 0.7154.
  \]
\end{proposition}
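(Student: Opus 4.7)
The plan is to refine the isoperimetric-inequality argument of Proposition~\ref{prop:k_1} by exploiting that \emph{both} $E_i$ and $E_j$ are disconnected. By Corollary~\ref{cor:one_big_component} each of $E_i, E_j$ has a unique big connected component, so the disconnectedness of $E_j$ furnishes some small component $C_j$ of $E_j$ satisfying $k_2 \le m(C_j) \le k_1$ (by Propositions~\ref{prop:k_2} and~\ref{prop:k_1}). By symmetry it suffices to treat the case where $C$ is a small component of $E_i$, so likewise $k_2 \le m(C) \le k_1$. Denote by $E_r, E_s$ the remaining two regions.

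Split $E_i$ as $C \cup (E_i\setminus C)$ and $E_j$ as $C_j \cup (E_j\setminus C_j)$ (both splittings are additive on perimeter, since these are components) and apply the standard isoperimetric inequality to each of the six bounded pieces $C, E_i\setminus C, C_j, E_j\setminus C_j, E_r, E_s$, together with the bound $P(E_0)\ge 2\sqrt{\pi\cdot 4}$ coming from the fact that the complement of $E_0$ has measure $\sum_t m(E_t)\ge 4$. Using $2P(\E)=\sum_t P(E_t)$ and $P(\E)\le k_0$, and lower bounding $m(E_i)-m(C)\ge 1-m(C)$, $m(E_j)-m(C_j)\ge 1-m(C_j)$ (since $m(E_i),m(E_j)\ge 1$), this yields
\[
f(m(C)) + f(m(C_j)) \le \frac{k_0}{\sqrt{\pi}} - 4,
\]
where $f(x)\defeq\sqrt{x}+\sqrt{1-x}$. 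Since $f$ is concave and strictly increasing on $[0,1/2]$, and $m(C_j)\ge k_2$ forces $f(m(C_j))\ge f(k_2)$, one obtains $f(m(C)) \le k_0/\sqrt{\pi} - 4 - f(k_2)$. As $m(C)<1/2$, squaring this inequality produces a quadratic in $m(C)$ whose smaller root, computed from the known numerical values of $k_0$ and $k_2$, is bounded above by $k_3=0.0408$.

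For the perimeter estimate, the same six-piece decomposition of $2P(\E)=\sum_t P(E_t)$ allows one to isolate $P(C)$:
\[
P(C) \le 2P(\E) - 2\sqrt{\pi}\bigl(2 + \sqrt{m(E_i)-m(C)} + \sqrt{m(C_j)} + \sqrt{m(E_j)-m(C_j)} + \sqrt{m(E_r)} + \sqrt{m(E_s)}\bigr).
\]
Plugging in $P(\E)\le k_0$, the newly-proved $m(C)\le k_3$ (so $m(E_i)-m(C)\ge 1-k_3$), $m(E_j)-m(C_j)\ge 1-k_1$, $m(C_j)\ge k_2$ and $m(E_r),m(E_s)\ge 1$ gives $P(C)\le k_9=0.7154$ by direct arithmetic. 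The only genuine obstacle is checking numerically that the constants $k_0,k_1,k_2$ already pinned down in earlier propositions are tight enough to produce the stated $k_3$ and $k_9$; conceptually the sole new ingredient beyond Proposition~\ref{prop:k_1} is that a second disconnected region contributes an extra term $f(m(C_j))\ge f(k_2)$ in the isoperimetric bound, which sharpens the Proposition~\ref{prop:k_1} estimate from $k_1$ down to $k_3$.
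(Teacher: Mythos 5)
Your overall strategy is the same as the paper's: split off a small component from each of the two disconnected regions, apply the isoperimetric inequality to the resulting six bounded pieces together with $P(E_0)\ge 2\sqrt{4\pi}$, and compare with $P(\E)\le k_0$. Your area estimate is correct and in fact slightly sharper than the paper's: by keeping the pair $\sqrt{m(C_j)}+\sqrt{m(E_j)-m(C_j)}$ together as $f(m(C_j))\ge f(k_2)$ (the paper instead bounds the two terms separately by $\sqrt{k_2}$ and $\sqrt{1-m}$, arriving at the function $2\sqrt{1-x}+\sqrt x$), you get $f(m(C))\le k_0/\sqrt{\pi}-4-f(k_2)\approx 1.1728$, hence $m(C)\lesssim 0.0366<k_3$.

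The perimeter estimate, however, does not close with the constants you specify. Using $m(E_j)-m(C_j)\ge 1-k_1$ one gets
\[
P(C)\le 2k_0-2\sqrt{\pi}\left(4+\sqrt{1-k_3}+\sqrt{k_2}+\sqrt{1-k_1}\right)\approx 0.939,
\]
which is well above $k_9=0.7154$, so the claim that ``direct arithmetic'' yields $k_9$ fails as written. The constant $k_9$ is tight: one needs $\sqrt{1-k_3}$ (not $\sqrt{1-k_1}$) for the big part of $E_j$ as well, which gives $2k_0-2\sqrt{\pi}\left(4+2\sqrt{1-k_3}+\sqrt{k_2}\right)\approx 0.7154\le k_9$, exactly the paper's computation. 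The repair is immediate in your setup: once the first part of the proposition has been established for every small component of both regions, you may take $m(C_j)\le k_3$ and hence $m(E_j)-m(C_j)\ge 1-k_3$; with that substitution your argument coincides with the paper's and the arithmetic works out.
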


\begin{proof}
Without loss of generality we might suppose that $i=1$, $j=2$. Let
$E_1'$ be the larger small component of $E_1$ and let $E_2'$ be the
larger small component of $E_2$. Suppose moreover that $m\defeq m(E_1')\ge
m(E_2')$. Then we have
\begin{align*}
m(E_1\setminus E_1') &\ge 1 - m, &
m(E_1') &= m,\\
m(E_2\setminus E_2') &\ge 1 - m, &
m(E_2') &\ge k_2.
\end{align*}
So, from the isoperimetric inequality:
\begin{align*}
  \frac{P(\E)}{\sqrt{\pi}}
  &\ge \sqrt{m(\RR^2\setminus E_0)} + \sum_{i=1}^2
  \sqrt{m(E_i\setminus E_i')} + \sum_{i=1}^2 \sqrt{m(E_i')} +
  \sum_{j=3}^4 \sqrt{m(E_j)}
\end{align*}
we obtain:
\begin{equation}\label{eq:fmk2}
  \begin{aligned}
  \frac{P(\E)}{\sqrt{\pi}}
  & \ge \sqrt{4} + \sqrt{1-m} + \sqrt{m} + \sqrt{1-m} + \sqrt{k_2} +
  2\sqrt{1}\nonumber\\
  & = 4 + 2\sqrt{1-m} + \sqrt{m} + \sqrt{k_2}.
  \end{aligned}
\end{equation}
If we set $f(x) = 2\sqrt{1-x} + \sqrt{x}$ and remember that $P(\E)\le k_0$
(Proposition~\ref{prop:k_0}) we obtain
\[
 f(m) \le \frac{k_0}{\sqrt{\pi}} - 4 - \sqrt{k_2} \le c_2 \defeq
 2.1606
\]
We have:
\[
f'(x) = -(1-x)^{-\frac 1 2} + \frac 1 2 x^{-\frac 1 2},\quad
f''(x) = -\frac 1 2 (1-x)^{-\frac 3 2} -\frac 1 4 x^{-\frac 3 2}.
\]
By direct computation one checks that $f'(k_1) > 0.1565 > 0$ and since $f''<0$ we
know that $f$ is strictly increasing on $[0,k_1]$. By direct
computation one checks $k_3$ was choosen so that $f(k_3) > c_2$.
If, by contradiction, $m> k_3$ since $m\in[k_2, k_1]$ (by Proposition~\ref{prop:k_2} and Proposition~\ref{prop:k_1}) we would have
$f(m) > f(k_3) > c_2$ against~\eqref{eq:fmk2}. So $m<k_3$.

Since $m$ was the measure of the largest small component we obtain the
first estimate: $m(C)\le m \leq k_3$.

To prove the estimate on the perimeter $P(C)$ suppose now that
$C=E_1'$ (not it will not matter if $E_1'$ is larger or smaller than
$E_2'$).
Recall that (Proposition~\ref{prop:k_2})
\[
m(E_1') \ge k_2, \qquad
m(E_2') \ge k_2
\]
and the previous estimate gives:
\[
m(E_1\setminus E_1') \ge 1 - k_3, \qquad
m(E_2\setminus E_2') \ge 1 - k_3.
\]
Hence, using the isoperimetric inequality we have
\begin{align*}
2 P(\E) & = P(E_1') + P(E_2') + \sum_{i=1}^2 P(E_i\setminus E_i') +
\sum_{i=3}^4 P(E_i) + P(\RR^2 \setminus E_0)\\
 & \ge P(E_1') + 2 \sqrt{\pi}\left(\sqrt{k_2} + 2 \sqrt{1-k_3} + 2
\sqrt{1} + \sqrt{4}\right)
\end{align*}
whence, recalling also that $P(\E)\le k_0$:
\[
P(E_1') \le 2 k_0 - 2 \sqrt{\pi}(\sqrt{k_2} + 2\sqrt{1-k_3} + 4) \le
k_9.
\]
\end{proof}

\begin{proposition}\label{prop:k_4}
  Let $\E \in \M^*(1,1,1,1)$ be such that the region $E_i$ has at
  least three components.
  Then every small component $C$ of
  $E_i$ satisfies:
  \[
  m(C) \le k_4 \defeq 0.0411.
  \]
%  Moreover the smaller of the components of $E_i$ has at least four edges.
\end{proposition}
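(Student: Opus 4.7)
The plan is to adapt the strategy of Proposition~\ref{prop:k_3}, exploiting now that a single region $E_i$ with at least three components must contain, besides the big component guaranteed by Corollary~\ref{cor:one_big_component}, at least two small components. Without loss of generality we take $i=1$ and let $C$ be the largest small component of $E_1$, setting $m\defeq m(C)$. By Proposition~\ref{prop:k_2} we have $m\ge k_2$, and by Proposition~\ref{prop:k_1} applied with $E_1'=E_1\setminus C$ (whose measure is at least $1-m\ge m$) we get $m\le k_1$. Since at most one component of $E_1$ is big and there are at least three, we can select a second small component $C'$ of $E_1$ with measure $m'\in[k_2,m]$.

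Next we apply the isoperimetric inequality component by component rather than region by region, as in Proposition~\ref{prop:k_3}. This gives $P(E_0)\ge 2\sqrt{\pi}\sqrt{4}$, $P(E_j)\ge 2\sqrt{\pi}$ for $j=2,3,4$, and on $E_1$, splitting into the components $C$, $C'$ and $E_1\setminus(C\cup C')$ and using the superadditivity inequality $\sqrt{a}+\sqrt{b}\ge\sqrt{a+b}$ on the (possibly disconnected) remainder whose total area is at least $1-m-m'$,
\[
P(E_1)\ge 2\sqrt{\pi}\left(\sqrt{m}+\sqrt{m'}+\sqrt{1-m-m'}\right).
\]
Summing, halving, and combining with $P(\E)\le k_0$ from Proposition~\ref{prop:k_0}, we obtain
\[
\sqrt{m}+\sqrt{m'}+\sqrt{1-m-m'}\le\frac{k_0}{\sqrt{\pi}}-5.
\]

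To extract a bound on $m$ we run a monotonicity argument in two variables. Since $m\le k_1<1/3$ we have $m<(1-m)/2$, so the derivative of $\sqrt{m'}+\sqrt{1-m-m'}$ with respect to $m'$ is positive on $[k_2,m]$; the left-hand side above is therefore minimized at $m'=k_2$, yielding
\[
\sqrt{m}+\sqrt{k_2}+\sqrt{1-m-k_2}\le\frac{k_0}{\sqrt{\pi}}-5.
\]
Setting $g(m)=\sqrt{m}+\sqrt{1-m-k_2}$, one checks that $g'(m)>0$ on $[0,(1-k_2)/2]$ and hence on $[k_2,k_1]$; the constant $k_4=0.0411$ is chosen precisely so that $g(k_4)+\sqrt{k_2}>k_0/\sqrt{\pi}-5$. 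Thus $m\ge k_4$ contradicts the displayed inequality, and so $m<k_4$; since $C$ was the largest small component, every small component of $E_i$ has measure at most $m<k_4$. The only genuine step that needs care is the numerical verification of the inequality defining $k_4$; everything else mirrors the argument of Proposition~\ref{prop:k_3}.
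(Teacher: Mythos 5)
Your proof is correct and follows essentially the same strategy as the paper: apply the isoperimetric inequality component-by-component to the cluster, reduce to a one-variable monotonicity check, and verify numerically that the chosen constant $k_4$ works. The only difference is that you optimize jointly over the second small component's measure $m'$ (giving the sharper remainder estimate $\sqrt{1-m-k_2}$), whereas the paper replaces $m'$ by $m$ in the remainder term and by $k_2$ in the $\sqrt{m'}$ term, producing the slightly weaker but simpler auxiliary function $f(m)=\sqrt{1-2m}+\sqrt{m}$; both routes close with the same value of $k_4$.
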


\begin{proof}
Without loss of generality we might suppose that $i=1$.
Notice that, by Corollary~\ref{cor:one_big_component}, there are
 at least two small components of $E_1$. Let
$E_1'$ be the larger small component of $E_1$ and $E_1''$ be another
small component of $E_1$. Let $m \defeq m(E_1') \ge m(E_1'')$.
Then we have
\begin{align*}
m(E_1\setminus (E_1'\cup E_1'')) &\ge 1 - m - m,\qquad
m(E_1') = m, \qquad
m(E_1'') \ge k_2.
\end{align*}
So, from the isoperimetric inequality:
\begin{align*}
  \frac{P(\E)}{\sqrt \pi}
  &\ge \sqrt{m(\RR^2\setminus E_0)} +
  \sqrt{m(E_i\setminus (E_i'\cup E_i''))} \\
  &\quad + \sqrt{m(E_i')} + \sqrt{m(E_i'')}
  + \sum_{j=2}^4 \sqrt{m(E_j)}
\end{align*}
we obtain:
\begin{equation}\label{eq:fmk2bis}
  \begin{aligned}
  \frac{P(\E)}{\sqrt{\pi}}
  & \ge \sqrt{4} + \sqrt{1-2m} + \sqrt{m} + \sqrt{k_2} +
  3\sqrt{1}\\
  & = 5 + \sqrt{1-2m} + \sqrt{m} + \sqrt{k_2}.
  \end{aligned}
\end{equation}
If we set $f(x) = \sqrt{1-2x} + \sqrt{x}$ and remember that $P(\E)\le k_0$
(Proposition~\ref{prop:k_0}) we obtain
\[
 f(m) \le \frac{k_0}{\sqrt{\pi}} - 5 - \sqrt{k_2} \le c_3 \defeq 1.1606
 \]
We have:
\[
f'(x) = -(1-2x)^{-\frac 1 2} + \frac 1 2 x^{-\frac 1 2},\quad
f''(x) = -(1-2x)^{-\frac 3 2} -\frac 1 4 x^{-\frac 3 2}.
\]
By direct computation one checks that $f'(k_1) > 0.0344 >0$ and since $f''<0$ we
know that $f$ is strictly increasing on $[0,k_1]$. By direct
computation one checks that $k_4$ has been choosen so that $f(k_4) > c_3$.
If, by contradiction, $m> k_4$ since $m\in[k_2, k_1]$
(by Proposition~\ref{prop:k_2} and Proposition~\ref{prop:k_1}) we would have
$f(m) > f(k_4) > c_3$ against~\eqref{eq:fmk2bis}. So $m\leq k_4$.

%% If $E_1''$ had only three edges, then by Theorem ..., we would have
%% $m(E_1'')> k_6$. So replacing $k_2$ with $k_6$ in the previous proof,
%% we would find:
%% \[
%% f(m) \le \frac{k_0}{\sqrt{\pi}} - 5 - \sqrt{k_6} \le c_3' \defeq 1.1107.
%% \]
%% Now, by direct computation, we find that $f(k_2)>1.1314 >c_3'$ and
%% $f(k_1)>1.2246>c_3'$ and being $f$ concave and $k_2 < m < k_1$, then
%% we would have $f(m) > c_3'$ which is a contradiction.
\end{proof}

\begin{proposition}\label{prop:two_small}
  Let $\E \in \M^*(1,1,1,1)$. Then the total number of small
  components is not larger than two.
  %% Moreover if the number of small
  %% components is two, the smaller small component has at least four edges.
\end{proposition}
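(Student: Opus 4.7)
The plan is to argue by contradiction: I would suppose that the total number $s$ of small components of $\E$ is at least three, and derive a contradiction with the perimeter bound $P(\E)\le k_0$ from Proposition~\ref{prop:k_0}.

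First I would perform a short case analysis on how the $s\ge 3$ small components can be distributed among the four regions, in order to obtain a uniform upper bound on their areas. Either all small components lie in a single region $E_i$, in which case $E_i$ has at least four connected components and Proposition~\ref{prop:k_4} gives $m(C)\le k_4$ for every small component $C\subset E_i$; or else the small components are spread over at least two different regions, both of which must then be disconnected, so Proposition~\ref{prop:k_3} applies and gives $m(C)\le k_3\le k_4$. In every case, every small component has area at most $k_4$.

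Next I would apply the isoperimetric inequality componentwise. For each $i$ let $E_i'$ be the unique big component of $E_i$ (its existence is guaranteed by Corollary~\ref{cor:one_big_component}), of area $b_i=m(E_i)-s_i$, where $s_i$ is the total area of the small components of $E_i$; set $S=\sum_i s_i$ and $B=\sum_i b_i$. Applying the isoperimetric inequality to $E_0$ and to each bounded component separately (as in the proof of Lemma~\ref{lem:isop}) yields
\[
P(\E)\ge \sqrt{\pi}\left(\sqrt{B+S}+\sum_{i=1}^{4}\sqrt{b_i}+\sum_{C\text{ small}}\sqrt{m(C)}\right).
\]
Since $B+S=\sum_i m(E_i)\ge 4$ one has $\sqrt{B+S}\ge 2$, and since every small component has area at least $k_2$ by Proposition~\ref{prop:k_2} the last sum is at least $s\sqrt{k_2}$.

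The delicate step is the lower bound on $\sum\sqrt{b_i}$. Using $b_i\ge 1-s_i$ together with the concavity of $\sqrt{\cdot}$ on $[0,k_1]$, the chord between $s_i=0$ and $s_i=k_1$ yields $\sqrt{1-s_i}\ge 1-\alpha s_i$ with $\alpha\defeq(1-\sqrt{1-k_1})/k_1$, so $\sum\sqrt{b_i}\ge 4-\alpha S\ge 4-\alpha s\,k_4$. Collecting everything I obtain
\[
\frac{P(\E)}{\sqrt{\pi}}\ge 6+s\bigl(\sqrt{k_2}-\alpha k_4\bigr),
\]
and a short numerical check with the stated values of $k_1,k_2,k_4$ shows that $\sqrt{k_2}-\alpha k_4\approx 0.135>0$, so already at $s=3$ the right-hand side exceeds $k_0/\sqrt{\pi}\approx 6.317$, contradicting Proposition~\ref{prop:k_0}. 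The main obstacle is that the margin is tight: replacing the concavity refinement by the cruder estimate $\sqrt{b_i}\ge\sqrt{1-k_1}$ yields only $P(\E)/\sqrt{\pi}\ge 2+4\sqrt{1-k_1}+s\sqrt{k_2}$, which falls just short of $k_0/\sqrt{\pi}$ at $s=3$, so some refinement exploiting $s_i\le k_1$ is essential to close the gap.
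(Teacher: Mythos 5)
Your proof is correct, but it takes a genuinely different route from the paper's. The paper's proof of this proposition does not invoke Propositions~\ref{prop:k_3} or~\ref{prop:k_4} at all: it fixes exactly three small components $C_1,C_2,C_3$ with $m=m(C_1)$ the largest, does a three-way case analysis on how they distribute among regions to show $\sum\sqrt{m(E_i\setminus(C_1\cup C_2\cup C_3))}\ge\sqrt{1-3m}+3$, and then analyzes the two-term function $f(x)=\sqrt{1-3x}+\sqrt x$ on $[k_2,k_1]$, using $f''<0$ to conclude from endpoint checks that $f>k_5$ throughout, contradicting $f(m)\le k_5$. Your version instead imports the sharper per-component area bound $m(C)\le k_4$ from Propositions~\ref{prop:k_3} and~\ref{prop:k_4}, applies the isoperimetric inequality component-by-component to all $s\ge3$ small pieces at once, and replaces the paper's two-variable concavity analysis by the chord bound $\sqrt{1-s_i}\ge 1-\alpha s_i$ on $[0,k_1]$. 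As you note, the chord refinement is what makes the budget close; and the choice of $k_4$ over $k_1$ as the ceiling for $m(C)$ is likewise essential (with $k_1$ in place of $k_4$ the right-hand side drops to about $6.22<k_0/\sqrt\pi\approx 6.32$). Your argument is a bit more modular in exploiting already-proved constants, while the paper's is self-contained within the proposition; the numerical margins are comparable ($\approx 6.40$ vs.\ $\approx 6.32$ on your side, $\approx 1.12$ vs.\ $1.0044$ on the paper's).

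One small presentational caution: the inequality $\sum_i\sqrt{b_i}\ge 4-\alpha S$ uses $m(E_i)\ge 1$ (so $b_i\ge 1-s_i$) and $s_i\in[0,k_1]$; both hold, the latter because Corollary~\ref{cor:one_big_component} together with Proposition~\ref{prop:k_1} applied to the decomposition of $E_i$ into its big component and the union of its small ones gives $s_i\le k_1$. It is worth stating this explicitly, since the chord estimate fails outside $[0,k_1]$.
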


\begin{proof}
Suppose by contradiction that the cluster $\E\in \M^*(1,1,1,1)$ has at
least three small components $C_1, C_2, C_3$.
Suppose
$m\defeq m(C_1) \ge m(C_2) \ge m(C_3)$.
Let $C=C_1\cup C_2 \cup C_3$ and let
$E_i' = E_i\setminus C$ for $i=1,\dots,4$.

From the isoperimetric inequality:
\begin{align*}
  \frac{P(\E)}{\sqrt \pi}
  &\ge \sqrt{m(\RR^2\setminus E_0)} + \sum_{i=1}^4 \sqrt{m(E_i')} + \sum_{i=1}^3 \sqrt{m(C_i)}.
\end{align*}
Now consider the quantity
\[
A = \sum_{i=1}^4 \sqrt{m(E_i')}
\]
to get an estimate of $A$ from below we use the estimates
$k_2 \le m(C_i) \le m$
but we have to distinguish three different cases:
\begin{enumerate}
\item if the small components
  all belong to the same region we have $A \ge
  \sqrt{1-3m} + 3 \sqrt 1$;
\item if only two of the small components belong to the same region:
  $A \ge \sqrt{1-2m} + \sqrt{1-m} + 2\sqrt 1$;
\item if the three small components belong to three
     different regions: $A \ge 3\sqrt{1-m} + \sqrt 1$.
\end{enumerate}

With a straightforward algebraic manipulation one can check that for
all $x\in [0, 1/3]$ one has
\[
3 \sqrt{1-x} + 1 \ge \sqrt{1-2x}+\sqrt{1-x} + 2 \ge \sqrt{1-3x} + 3
\]
so that in every case it holds $A \ge \sqrt{1-3m} + 3$.
%%
%% DIMOSTRAZIONE
%%
%% Si vuole provare che per ogni $0\leq x\leq\frac{1}{3}$ valgono

%% \begin{equation*}
%% 3\sqrt{1-x}+1\geq\sqrt{1-2x}+\sqrt{1-x}+2\geq\sqrt{1-3x}+3.
%% \end{equation*}

%% Provare
%% \begin{align*}
%% &3\sqrt{1-x}+1\geq\sqrt{1-2x}+\sqrt{1-x}+2\\
%% &\quad\qquad\qquad\Longleftrightarrow\\
%% &2\sqrt{1-x}\geq\sqrt{1-2x}+1\\
%% &\quad\qquad\qquad\Longleftrightarrow\\
%% &4(1-x)\geq1-2x+1+2\sqrt{1-2x}\\
%% &\quad\qquad\qquad\Longleftrightarrow\\
%% &2-2x\geq2\sqrt{1-2x}\\
%% &\quad\qquad\qquad\Longleftrightarrow\\
%% &1+x^2-2x\geq1-2x\\
%% &\quad\qquad\qquad\Longleftrightarrow\\
%% &x^2\geq0.
%% \end{align*}

%% Invece

%% \begin{align*}
%% &\sqrt{1-2x}+\sqrt{1-x}+2\geq\sqrt{1-3x}+3\\
%% &\qquad\qquad\Longleftrightarrow\\
%% &\sqrt{1-2x}+\sqrt{1-x}\geq\sqrt{1-3x}+1\\
%% &\quad\qquad\qquad\Longleftrightarrow\\
%% &1-2x+1-x+2\sqrt{(1-2x)\cdot(1-x)}\geq1-3x+1+2\sqrt{1-3x}\\
%% &\quad\qquad\qquad\Longleftrightarrow\\
%% &\sqrt{(1-2x)\cdot(1-x)}\geq\sqrt{1-3x}\\
%% &\quad\qquad\qquad\Longleftrightarrow\\
%% &1+2x^2-3x\geq1-3x\\
%% &\quad\qquad\qquad\Longleftrightarrow\\
%% &2x^2\geq0.
%% \end{align*}
%%
Hence
\begin{equation}\label{eq:fmk2tris}
\begin{aligned}
    \frac{P(\E')}{\sqrt{\pi}}
  & \ge \sqrt{4} + \sqrt{1-3m} + 3 + \sqrt{m} + 2\sqrt{k_2} \\
  & = \sqrt{1-3m} + \sqrt{m} + 5 + 2\sqrt{k_2}
\end{aligned}
\end{equation}
If we set $f(x) = \sqrt{1-3x} + \sqrt{x}$ and remember that $P(\E') =
P(\E) \le k_0$
(Proposition~\ref{prop:k_0}) we obtain
\[
 f(m) \le \frac{k_0}{\sqrt{\pi}} - 5 - 2\sqrt{k_2} \le k_5 \defeq 1.0044
 \]
We have:
\[
f'(x) = -\frac 3 2 (1-3x)^{-\frac 1 2} + \frac 1 2 x^{-\frac 1 2},\quad
f''(x) = -\frac 9 4 (1-x)^{-\frac 3 2} -\frac 1 4 x^{-\frac 3 2}.
\]
By direct computation one checks that $f(k_1) > 1.1206 > k_5$ and
$f(k_2)> 1.1189 >k_5$. And since $f''<0$ we
know that $f$ is concave and hence $f(x)>k_5$ if $x\in[k_2,k_1]$. Since
$f(m)\le k_5$ and we already know that $m\ge k_2$ (Proposition~\ref{prop:k_2})
we conclude that $m>k_1$, which is a contradiction.
\end{proof}

\begin{corollary}\label{cor:two_small}
  Let $\E\in \M^*(1,1,1,1)$.
  Then there are at most six bounded connected components.
  Four connected components are big
  and at most two are small (see Definition~\ref{def:big_small}).

  If the small components are exactly two,
  they
  have measure between $k_2$ and $k_4$, they are external, and
  they have edges in common with all the other regions. If the two
  small components belong to the same region they both have four edges,
  while if they belong to different regions they might have four or five edges.

  If there is only one small component
  it has measure not larger than $k_1$.
\end{corollary}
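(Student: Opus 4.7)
The plan is to read this off as a bookkeeping corollary of the work already done in the section. First, Corollary~\ref{cor:one_big_component} gives exactly one big component per region, hence four big components, and Proposition~\ref{prop:two_small} says there are at most two small components, so the total is at most six. Since every big component $E_i'$ satisfies $m(E_i')\ge 1-k_1$ (Corollary~\ref{cor:one_big_component}) the four big components account for almost all the area, which is consistent with the small ones being small.

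Suppose now there are exactly two small components $C,C'$. To bound their measure I split into two cases: if $C,C'$ lie in the same region $E_i$, then $E_i$ has at least three components (the big one plus $C,C'$) and Proposition~\ref{prop:k_4} gives $m(C),m(C')\le k_4$; if they lie in different regions $E_i,E_j$, then both $E_i$ and $E_j$ are disconnected and Proposition~\ref{prop:k_3} gives the stronger bound $m(C),m(C')\le k_3<k_4$. In either case $m(C)\le k_4$, and the lower bound $m(C)\ge k_2$ is Proposition~\ref{prop:k_2}. For the adjacency claim, observe $k_4<k_6$; so if a small component were adjacent to only $r\le 3$ regions, Proposition~\ref{prop:k_2} would force $m(C)\ge k_6$, a contradiction. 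Hence each small component is adjacent to all four remaining regions, including the external region $E_0$, which is exactly what it means to be external.

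For the edge counts I apply Proposition~\ref{prop:edges}(4) with $M=6$ bounded components. A small component adjacent to four distinct regions must have at least four edges, since by Proposition~\ref{prop:edges}(2) two components share at most one edge. If both small components sit in the same region ($k=3$ components in that region), the external bound is $M+1-k=4$, so each small component has exactly four edges. If the two small components sit in different regions ($k=2$), the bound becomes $M+1-k=5$, so the count is either four or five. Finally, if there is a single small component $C$ in a region $E_i$, write $E_i=E_i'\cup C$ with $E_i'$ the big component; the defining inequality $m(E_i')>m(E_i)/2\ge m(C)$ allows us to invoke Proposition~\ref{prop:k_1} directly and conclude $m(C)\le k_1$.

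The proof is therefore mostly a clean case analysis; the only delicate point is lining up the numerical constants so that the chain $k_3<k_4<k_6$ (and $k_4<k_1$, used implicitly through Proposition~\ref{prop:k_1}) makes the obstruction arguments via Proposition~\ref{prop:k_2} actually apply. No new geometric idea is needed beyond the estimates collected in this section.
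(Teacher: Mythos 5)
Your proof is correct and follows the same structure as the paper's: combine Corollary~\ref{cor:one_big_component}, Proposition~\ref{prop:two_small}, Propositions~\ref{prop:k_2}--\ref{prop:k_4}, and the inequality $k_4 < k_6$ to get the measure and adjacency claims, then count edges. The one small variation is that you derive the upper edge bounds from Proposition~\ref{prop:edges}(4) (the $M+1-k$ estimate), whereas the paper argues directly from Proposition~\ref{prop:edges}(2) that in the same-region case each of the four other regions is connected and contributes at most one edge to the small component; both routes give the same counts (exactly four, or four/five) and are equally valid.
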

\begin{proof}
  By Proposition~\ref{prop:two_small} there are at most two small components, so the total
  number of bounded connected components is at most six.

  If we have two small
  components they
  can either belong to the same region, and then by
  Proposition~\ref{prop:k_4} each small component has measure not larger than $k_4$. Or, the
  two components belong to different regions and then by
  Proposition~\ref{prop:k_3} each small component has measure not larger than
  $k_3<k_4$. Every small component which is adjacent only to three
  other regions would have measure
  larger than $k_6$ by Proposition~\ref{prop:k_2} and since $k_6 > k_4$ this is
  impossible. So every small component must have edges in common with
  all the other four regions, included the external one: so they have
  at least four edges and are external.
  If the two components belong
  to two different regions they can have four or five edges (the two
  small component might have an edge in common).
  If the two components belong to the same region, each other region is
  connected and hence they cannot have more than four edges (each edge
  is adjacent to a different component).

  If there is only one small component we can only apply
  Proposition~\ref{prop:k_1} to get the estimate with the constant $k_1$.
\end{proof}

\section{Clusters with six components}\label{sec:six}

In this section we will consider possible
minimizers $\E\in\M^*(1,1,1,1)$ with exactly six bounded
components and we will exclude that they exist.

The following Corollary assures that we have $m(E_i)=1$ for
$i=1,\dots,4$. This will be used in the following without further notice.

\begin{corollary}\label{corollary:strong_weak}
$\M^*(1,1,1,1) = \M(1,1,1,1)$.
\end{corollary}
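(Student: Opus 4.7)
The plan is to combine two facts already established in the excerpt. First, Corollary~\ref{cor:two_small} shows that any cluster $\E \in \M^*(1,1,1,1)$ has at most six bounded connected components. Second, item~6 of Proposition~\ref{prop:edges} tells us that a weak minimizer with $M \le 6$ bounded connected components is automatically a strong minimizer. Together these immediately yield the inclusion $\M^*(1,1,1,1) \subseteq \M(1,1,1,1)$.

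For the reverse inclusion, I would first deduce that the two infima coincide. Pick any $\E^* \in \M^*(1,1,1,1)$, which exists by Theorem~\ref{thm:weak_existence_regularity}. By the inclusion just obtained $\E^* \in \M(1,1,1,1)$, so in particular $\boldsymbol m(\E^*) = (1,1,1,1)$ and $P(\E^*) = p(1,1,1,1)$; on the other hand $P(\E^*) = p^*(1,1,1,1)$ by the choice of $\E^*$. Hence $p(1,1,1,1) = p^*(1,1,1,1)$. Now for any $\E \in \M(1,1,1,1)$ one has $\E \in \mathcal C(1,1,1,1) \subseteq \mathcal C^*(1,1,1,1)$ and $P(\E) = p(1,1,1,1) = p^*(1,1,1,1)$, so $\E \in \M^*(1,1,1,1)$, which gives $\M(1,1,1,1) \subseteq \M^*(1,1,1,1)$.

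There is no serious obstacle here. The substantive work has already been carried out in Section~\ref{sec:1111}, where the quantitative bounds in Propositions~\ref{prop:k_0}--\ref{prop:two_small} were used to control the sizes, and hence the total number, of components of any weak minimizer tightly enough to invoke Proposition~\ref{prop:edges} item~6. After that, the statement of the corollary is essentially a bookkeeping step, but a crucial one: it justifies the sentence preceding the corollary, namely that from now on one may work under the assumption $m(E_i)=1$ for every $i \in \{1,\dots,4\}$ without losing any minimizer.
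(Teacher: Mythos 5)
Your proposal is correct and follows essentially the same route as the paper's own proof: the inclusion $\M^*\subseteq\M$ via Corollary~\ref{cor:two_small} together with item~6 of Proposition~\ref{prop:edges}, then the equality $p^*=p$ deduced from nonemptiness of $\M^*$, and finally the reverse inclusion. No discrepancies to report.
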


\begin{proof}
Given any $\E\in \M^*(1,1,1,1)$
by Corollary~\ref{cor:two_small} we know that $\E$
has no more than six bounded components.
By Proposition~\ref{prop:edges} we conclude that
$\E\in \M(1,1,1,1)$, hence $\M^*(1,1,1,1)\subset \M(1,1,1,1)$.
Since $\M^*(1,1,1,1)$ is not empty (Theorem~\ref{thm:weak_existence_regularity})
we obtain $p^*(1,1,1,1) = P(\E) = p(1,1,1,1)$.

On the other hand, given $\E'\in\M(1,1,1,1)$ we have
$\E'\in\mathcal C^*(1,1,1,1)$
and since $P(\E') = p(1,1,1,1) = p^*(1,1,1,1)$ we
conclude that $\E'\in \M^*(1,1,1,1)$.
\end{proof}

\begin{corollary}\label{cor:not_three}
  Let $\E\in \M^*(1,1,1,1)$. Then we exclude that one region $E_i$ can
  have three components.
\end{corollary}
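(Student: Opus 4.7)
The plan is to derive a contradiction by exhibiting $K_{3,3}$ as a subgraph of the adjacency graph of the cluster, using Kuratowski's theorem. Suppose for contradiction that some region has three components; up to relabelling, $E_1$ does. By Corollary~\ref{cor:one_big_component} exactly one of them, say $E_1^1$, is big, and the other two, $E_1^2$ and $E_1^3$, are small; by Proposition~\ref{prop:two_small} these two exhaust the small components of $\E$, so $E_2, E_3, E_4$ are all connected and $\E$ has $M=6$ bounded components.

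Next I would extract the adjacency information. By Corollary~\ref{cor:two_small}, since both small components lie in the same region $E_1$, each of $E_1^2$ and $E_1^3$ is external, has exactly four edges, and shares one of them with each of $E_0, E_2, E_3, E_4$. By Proposition~\ref{prop:edges}(3), $E_1^1$ has at least three edges, and none can abut $E_1^2$ or $E_1^3$ (same region $E_1$); hence $E_1^1$ borders at least three of the regions in $\{E_0, E_2, E_3, E_4\}$.

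I would then form the simple adjacency graph $G$ of $\E$: one vertex for each bounded component and one for $E_0$, with an edge joining two vertices whenever the corresponding components share a cluster edge (simplicity being guaranteed by Proposition~\ref{prop:edges}(2)). Since $\partial\E$ is a planar graph whose faces are exactly the bounded components of $\E$ together with $E_0$, the graph $G$ is its combinatorial dual, hence planar. Now pick any three regions $A, B, C \in \{E_0, E_2, E_3, E_4\}$ bordering $E_1^1$; by the previous paragraph each of $A, B, C$ also borders $E_1^2$ and $E_1^3$, so $G$ contains a $K_{3,3}$ with parts $\{E_1^1, E_1^2, E_1^3\}$ and $\{A, B, C\}$, contradicting Kuratowski's theorem.

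The main obstacle is simply to recognise that the combinatorial information already assembled in Corollary~\ref{cor:two_small}---in particular, the statement that when both small components belong to a single region each of them must touch every other region---is already sharp enough to force $K_{3,3}$ in the adjacency graph, so that no further metric estimate on pressures or perimeters is required. A natural alternative attempt would be to combine Proposition~\ref{prop:pressure_perimeter} on the two small components with the pressure formula to push $P(\E)$ above $k_0$, but the purely topological planarity argument is considerably cleaner.
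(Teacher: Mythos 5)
Your proof is correct and arrives at the contradiction, but it takes a genuinely different route from the paper. Both arguments start from the same combinatorial preparation (Corollary~\ref{cor:one_big_component}, Proposition~\ref{prop:two_small}, Corollary~\ref{cor:two_small}, items of Proposition~\ref{prop:edges}), establishing that $M=6$, that each small component $E_1^2,E_1^3$ has exactly four edges (one to each of $E_0,E_2,E_3,E_4$), and that $E_1^1$ borders at least three of these regions. The paper then finishes with a direct vertex count: since two components of the same region cannot share a vertex, $E_1$ alone has at least $3+4+4=11$ vertices, contradicting the global count $v=2(M-1)=10$ from Proposition~\ref{prop:edges}. You instead pass to the face-adjacency graph $G$ (the simple planar dual of the embedded graph $\partial\E$; simplicity is Proposition~\ref{prop:edges}(2), planarity is the standard fact about duals of planar embeddings) and exhibit $K_{3,3}$ in $G$ with parts $\{E_1^1,E_1^2,E_1^3\}$ and any three of $\{E_0,E_2,E_3,E_4\}$ bordering $E_1^1$, contradicting Kuratowski. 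Both routes are sound; the paper's is more elementary and self-contained (just the Euler count already recorded in Proposition~\ref{prop:edges}), whereas yours is more conceptual and would survive in a setting where the exact vertex count were not so tightly pinned down but planarity and simplicity of the dual still held. One small point you may want to make explicit: the step ``$E_1^1$ borders at least three of the regions in $\{E_0,E_2,E_3,E_4\}$'' uses not only that $E_1^1$ has $\geq 3$ edges and none abut $E_1^2,E_1^3$, but also Proposition~\ref{prop:edges}(2) to guarantee these $\geq 3$ edges go to three \emph{distinct} neighbours.
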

\begin{proof}
  Suppose by contradiction that
  the region $E_1$ is composed by three components: one big and two
  small
  (recall that, by Corollary~\ref{cor:one_big_component},
  each region has one big component).
   By Proposition~\ref{prop:edges} we know that every component has at least three
  edges.
  By Corollary~\ref{cor:two_small}, a small component has four edges,
   so, the two small components have exactly
  four vertices and the region $E_1$ has at least $3+4+4 = 11$ vertices.
   But the total number of bounded connected components is
  $M=6$ and by Proposition~\ref{prop:edges} the number of vertices should be
  $v = 2(M-1) = 10$. This is a contradiction.
\end{proof}

\begin{proposition}\label{prop:not_two_two}
  Let $\E \in \M^*(1,1,1,1)$. Then we exclude that two different
  regions are disconnected.
\end{proposition}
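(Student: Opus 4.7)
The plan is to argue by contradiction, assuming both $E_1$ and $E_2$ are disconnected. Combining Corollary~\ref{cor:not_three} with Corollary~\ref{cor:one_big_component}, each of $E_1$ and $E_2$ splits as one big component plus exactly one small component $C_i$. Corollary~\ref{cor:two_small} then says each $C_i$ is external, adjacent to all four other regions, and has $n_i\in\{4,5\}$ edges, while Proposition~\ref{prop:k_3} yields $m(C_i)\le k_3$ and $P(C_i)\le k_9$.

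The first step is to convert the smallness of $P(C_i)$ into a lower bound on the pressure $p_i$ via Proposition~\ref{prop:pressure_perimeter}:
\[
p_i \ge \frac{(6-n_i)\pi}{3P(C_i)} \ge \frac{(6-n_i)\pi}{3k_9}.
\]
By Corollary~\ref{corollary:strong_weak} we have $m(E_i)=1$, and Proposition~\ref{prop:weak_minimizers} gives $p_3,p_4\ge 0$; hence Theorem~\ref{thm:pressure_formula} produces
\[
P(\E)=2\sum_{i=1}^{4}p_i \ge 2(p_1+p_2)\ge \frac{2\pi}{3k_9}\bigl((6-n_1)+(6-n_2)\bigr).
\]
When $n_1=n_2=4$ this yields $P(\E)\ge 8\pi/(3k_9)$, which a direct numerical check ($\approx 11.71$) shows exceeds $k_0$, contradicting Proposition~\ref{prop:k_0}.

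In the remaining cases some $n_i=5$. Since $C_i$ must already be adjacent to $E_0$, to the big component of $E_j$, and to $E_3,E_4$, a fifth edge can only be shared with the other small component $C_j$, so $C_1$ and $C_2$ share an edge. Here I would apply the sharper form of Proposition~\ref{prop:pressure_perimeter} retaining the $p_{\mathrm{min}}$ term: because $C_1$ borders $C_2\subset E_2$ (and symmetrically), $p_2$ enters $p_{\mathrm{min},1}$ and $p_1$ enters $p_{\mathrm{min},2}$, producing a coupled lower bound on $p_1+p_2$. In parallel I would push the direct isoperimetric decomposition $2P(\E)=\sum_{k=0}^{4}P(E_k)$, exploiting $P(E_i)\ge 2\sqrt{\pi}(\sqrt{1-k_3}+\sqrt{k_2})$ for $i=1,2$, $P(E_3),P(E_4)\ge 2\sqrt{\pi}$, and $P(E_0)\ge 4\sqrt{\pi}$; a numerical computation brings this sum very close to $2k_0$, and combining it with any nontrivial gain from the coupled pressure system should close the remaining gap.

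The main obstacle is precisely this $n_i=5$ configuration: the isolated estimate $p_i\ge \pi/(3k_9)$ is too weak on its own, and the pure isoperimetric lower bound on $P(\E)$ sits just below $k_0$ (the two small components squeeze $\sqrt{1-k_3}+\sqrt{k_2}$ nearly optimally). Bridging this likely requires either exploiting the geometry of the shared $C_1$--$C_2$ arc (whose curvature is $|p_1-p_2|$, linking the two coupled inequalities above), or a dedicated variation merging the two small components --- the pair has area at most $2k_3$ and hence remains isoperimetrically very inefficient --- rather than a purely pressure--perimeter estimate.
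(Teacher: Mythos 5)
Your opening is sound and matches the paper's first move: from Corollaries~\ref{cor:not_three}, \ref{cor:one_big_component} and \ref{cor:two_small} each disconnected region has exactly one small component $C_i$ with $n_i\in\{4,5\}$ edges, and when $n_1=n_2=4$ the bound $p_i\ge \frac{2\pi}{3k_9}>\frac{k_0}{4}$ from Proposition~\ref{prop:pressure_perimeter} together with the pressure formula $P(\E)=2\sum_i p_i\ge 2(p_1+p_2)$ gives $P(\E)>k_0$, a contradiction. Your combinatorial observation that a five-edged $C_i$ must share an edge with the other small component is also correct (it follows from Proposition~\ref{prop:edges}, item~2, since $E_0$, $E_3$, $E_4$ are connected).

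However, the case in which some $n_i=5$ --- which is the real content of the proposition --- is not proved; you explicitly acknowledge that your two candidate strategies (a coupled pressure system via the $p_{\mathrm{min}}$ term, or an isoperimetric decomposition that lands ``just below $k_0$'') do not close the gap, and neither is carried out. This is a genuine gap, not a presentational one: the paper does \emph{not} resolve this case by a single global inequality. Instead it runs a seven-step combinatorial classification of the embedded planar graph (showing at most one component is internal, that $E_2'$ is external with three or four edges, that $E_1'$ and $E_2'$ must be adjacent, that an internal connected region must touch both big components, and that $E_0$ cannot have five edges), enumerates the few surviving diagrams when $E_0$ has six edges, and only then applies configuration-specific pressure estimates --- using Proposition~\ref{prop:varI}, Proposition~\ref{prop:varII} to bound the external arc length $\ell_i$ of $C_i$, Lemma~\ref{lem:turning_angle} on $E_3$, $E_4$ and $E_0$, and the refined form of Proposition~\ref{prop:pressure_perimeter} with an explicit lower bound $p_{\mathrm{min}}\ge\frac{\sqrt\pi}{6}$ --- to push $P(\E)$ above $k_0$ in each remaining configuration. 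Without identifying which adjacency structures are actually possible, the purely quantitative estimates you propose are, as you suspect, too weak; so the proof as written cannot be completed along the route you sketch without importing essentially all of the paper's topological case analysis.
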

\begin{proof}
  By contradiction suppose that $C_1$ and $C_2$ are small components of $E_1$ and $E_2$
  respectively and let $E_1' = E_1\setminus C_1$ and
  $E_2' = E_2\setminus C_2$ be the two big components.
  %  Define, for convenience, $E_3' = E_3$ and $E_4' = E_4$.

  Recall that, by Corollary~\ref{cor:two_small}, the small components $C_1$ and $C_2$
  have four or five edges.

  If the component $C_i$ ($i=1,2$) has five edges,
  by Proposition~\ref{prop:pressure_perimeter} and Proposition~\ref{prop:k_3},
  one finds that
  \begin{equation}\label{pi_four}
  p_i \ge \frac{\pi}{3 P(C)} \ge \frac{\pi}{3 k_9} > 1.4637 > \frac{k_0}{8}
  \end{equation}
  On the other hand if $C_i$ has only four edges, one finds:
  \[
  p_i \ge \frac{2\pi}{3 P(C)} \ge \frac{2\pi}{3 k_9} > \frac{k_0}{4}.
  \]

  %% In any case we have $\min\{p_1, p_2\} > k_0/8$ and
  %% since one of the two regions $C_1$ or $C_2$ has four edges (FALSO!),
  %% we have
  %% $\max\{p_1,p_2\} > k_0/4$. Hence $p_1+p_2 > 3 k_0/8$.

  %% Remember that $k_0 \ge P(\E) = 2(p_1+p_2+p_3+p_4)$, so we obtain
  %% $p_3+p_4 \le k_0 / 8$. So $p_1$ and $p_2$ are the two largest
  %% pressures of the cluster.

  Remember that, by Theorem~\ref{thm:pressure_formula} and Proposition~\ref{prop:k_0}, we have
  \[
  p_1 + p_2 + p_3 + p_4 = \frac{P(\E)}{2} \le \frac{k_0}{2}.
  \]
  Without loss of generality we might and shall suppose that
  $p_1\ge p_2$.

  Notice that $p_1$ and $p_2$ are both larger than the average
  and, in
  particular, $p_2$ is not the lowest pressure: $p_2 > \min\{p_3,p_4\}$.
  If both regions $C_1$ and $C_2$ had four edges, we
  would find $p_1+p_2 > k_0 / 2$ which is a contradiction. Hence we
  know that $C_1$ has four or five edges and $C_2$ has five edges (if
  $C_i$ has four edges $p_i$ is the higher pressure).

  \emph{Step 1:} we claim that at most one component is internal.
  By Corollary~\ref{cor:two_small} we know that the small components are external
  and by Corollary~\ref{cor:one_big_internal} we know that at most one big component is
  internal. The claim follows.

  \emph{Step 2:} we claim that $E_2'$ is external and has three or
  four edges.

  Notice that since at most one component is internal, and we have a
  total of 6 bounded components, the external region $E_0$ has
  either 5 or 6 vertices. On the other hand the big component $E_2'$ has
  at least 3 vertices and the small component $C_2$ has 5
  vertices. Two of the vertices of $C_2$ are in common with the
  vertices of $E_0$ and, if $E_2'$ were internal, all its vertices
  would be distinct from the vertices of $E_0$ and, of course, from
  the vertices of $C_2$. So we find at least $3+3+5 = 11$ distinct
  vertices of
  the cluster $\E$ while we know (Proposition~\ref{prop:edges}) that $\E$ has exactly
  $10$ vertices.

  The same contradiction holds in the case that $E_2'$ has more than four
  vertices since also in this case at least three of them would be
  internal.

  \emph{Step 3:} we claim that $E_1'$ and $E_2'$ are
  adjacent. Let $\ell_1$ and $\ell_2$ be the lengths of the
  external edges of $E_1'$ and $E_2'$ respectively
  ($\ell_i=0$ if $E_i'$ is internal). Suppose by contradiction that
  $E_1'$ and $E_2'$ have no common edge. Then
  \[
   k_0 \ge P(\E) \ge P(E_1') + P(E_2') + P(E_0) - (\ell_1+\ell_2)
   \]
   and by applying the isoperimetric inequality and the estimates
   $m(E_i')\ge 1- k_3$ we obtain:
   \[
   k_0 \ge 2\sqrt{\pi}(2\sqrt{1-k_3}+\sqrt{4}) - (\ell_1+\ell_2)
   \]
   whence
   \[
   \frac{\ell_1 + \ell_2}{2} \ge 2 \sqrt{\pi}(\sqrt{1-k_3}+1) - \frac{k_0}{2} > c_4 \defeq 1.4186.
   \]
   If we let $\ell_i$ be the largest between $\ell_1$ and $\ell_2$ we have
   $\ell_i > c_4$
   and from Proposition~\ref{prop:pressure_length} we obtain the following estimate on the
   pressure of the corresponding region $E_i$ (remember that every
   component of $\E$ is adjacent to at most four different regions):
   \begin{equation}\label{pi_more}
   p_i
   \ge \frac{\sqrt{\pi}}{2\sqrt{m(C_i)}} - \frac{2}{\ell_i}
   \ge \frac{\sqrt{\pi}}{2\sqrt{k_3}} - \frac{2}{c_4}
   > 2.9776 > \frac{k_0}{4}.
   \end{equation}
   Remember that $p_1+p_2+p_3+p_4 \le k_0/2$ so $p_i$ is the highest
   pressure (actually $i=1$ since we decided that $p_1\ge p_2$).
   Then let $n\ge 3$ be the number of edges of
   $E_i'$ and let $L_{i,j}$ be the total length of the edges in common
   between $E_i'$ and $E_j$ (so that $L_{i,0}=\ell_i$):
   \[
   \pi \ge \frac{(6-n)\pi}{3} = \sum_j (p_i - p_j) L_{i,j}
   \ge p_i \ell_i
   \]
   whence:
   \[
    p_i \le \frac{\pi}{\ell_i} \le \frac{\pi}{c_4} < 2.2146
   \]
   which is in contradiction with with \eqref{pi_more}.

\emph{Step 4}: if a connected region $E_i$ ($i=3,4$) is internal, it is
adjacent to both $E_1'$ and $E_2'$.

The proof is the same as in the previous Step. Just take $E_i$ in place of $E_2'$
and $E_1'$ or $E_2'$ in place of $E_1'$. Notice that $\ell_2=0$ so that
$\ell_i = \ell_1$ and the proof completes in exactly the same way (the
estimates are actually stronger).

%% \emph{Step 5}: if $E_3$ or $E_4$ is internal and $C_1$ has
%% four edges, then $E_3$ and $E_4$ are adjacent. (FORSE NON PIU'
%% UTILIZZATO!)
%% In fact suppose $E_3$ is internal and, by
%% contradiction, suppose that $E_4$ is not adjacent to $E_3$. Then $E_3$ is only
%% adjacent to components of the two regions $E_1$ and $E_2$.
%% Moreover we know that since $C_1$ has four edges, $p_1>k_0/4$ (see the
%% beginning of the proof) and $p_2>k_0/8$.
%% Since
%% $p_1+p_2+p_3+p_4\le k_0/2$ we conclude that
%% \[
%% p_3 \le p_3+p_4 < \frac{k_0}{2} - \frac{k_0}{4} - \frac{k_0}{8} =
%% \frac{k_0}{8}
%% \le p_2.
%% \]
%% By
%% Theorem ..., we have
%% \[
%% \frac{(6-n)\pi}{3} = \sum_{j=1}^2 (p_3 - p_j) L_{3,j}
%% \]
%% where $n$ is the number of edges of $E_3$ and $L_{3,j}$ is the total
%% length of the edges in common with $E_j$. Since we know that $n\le 6$,
%% the left-hand side of the equation is not negative. But since we found
%% that $p_1 \ge p_2 >p_3$ we
%% notice that the right-hand side is negative and this is a contradiction.
%% Of course the same is true if $E_3$ and $E_4$ are exchanged.

\emph{Step 5}: we claim that if one of $E_3$ or $E_4$ is internal and the other
one is external with only three edges, then $E_3$ and $E_4$ must be
adjacent. We proceed in a similar way as the step before. Suppose by
contradiction that $E_3$ is internal and not adjacent to $E_4$.

So
$E_3$ is only adjacent to the components of $E_1$ and $E_2$ and it has at most four edges, so,
by Lemma~\ref{lem:turning_angle}, we have
\[
0 < \frac{(6-4)\pi}{3} \le \sum_{i=1}^2 (p_3-p_i)L_{3,i}.
\]
We deduce that $p_3\ge p_2$ since otherwise (being $p_1\ge p_2$) the
right hand side of the previous  equation would be negative. So
$p_3\ge p_2\ge k_0/8$.

Now, let
$\ell_i$ be the length of the external edges of $E_i'$ (recall that
only one component can be internal hence $E_i'$ is external and
$\ell_i>0$).
We have
\[
k_0 \ge P(\E) \ge P(E_1' \cup E_2' \cup E_3) + P(E_0) - (\ell_1+\ell_2)
\]
whence, by applying the isoperimetric inequality,
\[
\frac{\ell_1+\ell_2}{2} \ge \sqrt{\pi}(\sqrt{2(1-k_3)+1} + \sqrt{4}) -
\frac{k_0}{2}
> c_5 \defeq 0.9747.
\]
Now if $\ell_i$ is the maximum between $\ell_1$ and $\ell_2$ we know
that $\ell_i > c_5$.
By Proposition~\ref{prop:pressure_length} (since any component can be
adjacent to at most 4 different regions), we have
\[
p_i \ge \frac{2\sqrt{\pi}}{4\sqrt{m(C_i)}}- \frac{2}{\ell_i}
\ge \frac{\sqrt\pi}{2\sqrt{k_3}} - \frac{2}{c_5} > 2.3355 >
\frac{3}{16} k_0.
\]
So $p_1 > 3 k_0 / 16$ (since $p_1$ has been choosen to be the maximum between
$p_1$ and $p_2$).

Now we work on $E_4$ which is external with $m=3$ edges. Remember that
$p_2$ cannot be the lowest pressure and since $p_1\ge p_2$ and
$p_3\ge p_2$ we deduce that $p_4$ is the lowest pressure.
Hence, by
Lemma~\ref{lem:turning_angle}
\[
\pi = \frac{(6-m)\pi}{3} = \sum_j (p_4-p_j) L_{4,j} \le p_4 L_{4,0}
\]
and by Proposition~\ref{prop:varI}
\[
p_4 \ge \frac{\pi}{L_{4,0}} \ge \frac{\pi}{2\sqrt{\pi}\sqrt{m(E_4)}} =
\frac{\sqrt{\pi}}{2} > 0.8862 > \frac{k_0}{16}
\]
So, we have found that
\[
 P(\E) = 2(p_1 + p_2 + p_3 + p_4) > 2\left(\frac{3k_0}{16} + \frac{k_0}{8} +
 \frac{k_0}{8} + \frac{k_0}{16}\right) = k_0
\]
which contradicts the minimality of $\E$. The claim is proved.

\begin{figure}
  \centering \subfigure[Case (i), incomplete diagram]{%\input{Fig_02_a.tikz}
\begin{tikzpicture}[line cap=round,line join=round,>=triangle 45,x=0.7cm,y=0.7cm]
\clip(-3.6,-1.1) rectangle (3.7,5.7);
%\draw(-3.6,-1.0) rectangle (3.7,5.7);
\draw[line width=2pt] (-0.42,5.22)-- (-0.44,4.58);
\draw[line width=2pt] (-0.44,4.58)-- (0.16,3.82);
\draw[line width=2pt] (0.16,3.82)-- (0.92,4.34);
\draw[line width=2pt] (0.92,4.34)-- (0.98,4.98);
\draw[line width=2pt] [shift={(0.23317073170731736,4.826829268292684)}] plot[domain=0.20228948488978113:2.599746620763336,variable=\t]({1.*0.7623747300575605*cos(\t r)+0.*0.7623747300575605*sin(\t r)},{0.*0.7623747300575605*cos(\t r)+1.*0.7623747300575605*sin(\t r)});
\draw[line width=2pt] [shift={(-1.3065030674846625,3.184417177914111)}] plot[domain=1.160063019204046:4.045398549712445,variable=\t]({1.*2.220244381646054*cos(\t r)+0.*2.220244381646054*sin(\t r)},{0.*2.220244381646054*cos(\t r)+1.*2.220244381646054*sin(\t r)});
\draw[line width=2pt] [shift={(-1.1099782055938978,0.7418924809298947)}] plot[domain=2.723198108469533:5.7168591671619815,variable=\t]({1.*1.718232389140763*cos(\t r)+0.*1.718232389140763*sin(\t r)},{0.*1.718232389140763*cos(\t r)+1.*1.718232389140763*sin(\t r)});
\draw[line width=2pt] [shift={(1.4825745220311104,1.2397471390392654)}] plot[domain=-2.2484410191826436:0.662020010267545,variable=\t]({1.*1.822404586584658*cos(\t r)+0.*1.822404586584658*sin(\t r)},{0.*1.822404586584658*cos(\t r)+1.*1.822404586584658*sin(\t r)});
\draw[line width=2pt] [shift={(2.030801134349521,3.7298298475717826)}] plot[domain=-0.9950323237592027:2.269764756630019,variable=\t]({1.*1.6331284193144862*cos(\t r)+0.*1.6331284193144862*sin(\t r)},{0.*1.6331284193144862*cos(\t r)+1.*1.6331284193144862*sin(\t r)});
\draw[line width=2pt] (-2.68,1.44)-- (-0.52,1.8);
\draw[line width=2pt] (-0.52,1.8)-- (0.34,-0.18);
\draw (-0.02,5.12) node[anchor=north west] {$C_2$};
\draw (-1.54,0.86) node[anchor=north west] {$E_2'$};
\draw (-0.9,4.5) node[anchor=north west] {$x_1$};
\draw (-0.15,3.7) node[anchor=north west] {$x_2$};
\draw (0.98,4.52) node[anchor=north west] {$x_3$};
\draw (-0.48,2.2) node[anchor=north west] {$y$};
\draw [fill=qqqqff] (-0.42,5.22) circle (2.5pt);
\draw [fill=qqqqff] (-0.44,4.58) circle (2.5pt);
\draw [fill=qqqqff] (0.16,3.82) circle (2.5pt);
\draw [fill=qqqqff] (0.92,4.34) circle (2.5pt);
\draw [fill=qqqqff] (0.98,4.98) circle (2.5pt);
\draw [fill=qqqqff] (-2.68,1.44) circle (2.5pt);
\draw [fill=qqqqff] (0.34,-0.18) circle (2.5pt);
\draw [fill=qqqqff] (2.92,2.36) circle (2.5pt);
\draw (2.8,2.72) node {$v$};
\draw [fill=qqqqff] (-0.52,1.8) circle (2.5pt);
\draw [fill=qqqqff] (0.92,2.72) circle (2.5pt);
\draw (1.06,3.08) node {$w$};
\end{tikzpicture}    \label{Fig_02_a.tikz}}\\
  \subfigure[]{%\input{Fig_02_a_1.tikz}
    \begin{tikzpicture}[line cap=round,line join=round,>=triangle 45,x=0.7cm,y=0.7cm]
\clip(-3.6,-1.1) rectangle (3.7,5.7);
%\draw(-3.6,-1.0) rectangle (3.7,5.7);
\draw[line width=2pt] (-0.42,5.22)-- (-0.44,4.58);
\draw[line width=2pt] (-0.44,4.58)-- (0.16,3.82);
\draw[line width=2pt] (0.16,3.82)-- (0.92,4.34);
\draw[line width=2pt] (0.92,4.34)-- (0.98,4.98);
\draw[line width=2pt] [shift={(0.23317073170731736,4.826829268292684)}] plot[domain=0.20228948488978113:2.599746620763336,variable=\t]({1.*0.7623747300575605*cos(\t r)+0.*0.7623747300575605*sin(\t r)},{0.*0.7623747300575605*cos(\t r)+1.*0.7623747300575605*sin(\t r)});
\draw[line width=2pt] [shift={(-1.3065030674846625,3.184417177914111)}] plot[domain=1.160063019204046:4.045398549712445,variable=\t]({1.*2.220244381646054*cos(\t r)+0.*2.220244381646054*sin(\t r)},{0.*2.220244381646054*cos(\t r)+1.*2.220244381646054*sin(\t r)});
\draw[line width=2pt] [shift={(-1.1099782055938978,0.7418924809298947)}] plot[domain=2.723198108469533:5.7168591671619815,variable=\t]({1.*1.718232389140763*cos(\t r)+0.*1.718232389140763*sin(\t r)},{0.*1.718232389140763*cos(\t r)+1.*1.718232389140763*sin(\t r)});
\draw[line width=2pt] [shift={(1.4825745220311104,1.2397471390392654)}] plot[domain=-2.2484410191826436:0.662020010267545,variable=\t]({1.*1.822404586584658*cos(\t r)+0.*1.822404586584658*sin(\t r)},{0.*1.822404586584658*cos(\t r)+1.*1.822404586584658*sin(\t r)});
\draw[line width=2pt] [shift={(2.030801134349521,3.7298298475717826)}] plot[domain=-0.9950323237592027:2.269764756630019,variable=\t]({1.*1.6331284193144862*cos(\t r)+0.*1.6331284193144862*sin(\t r)},{0.*1.6331284193144862*cos(\t r)+1.*1.6331284193144862*sin(\t r)});
\draw[line width=2pt] (-2.68,1.44)-- (-0.52,1.8);
\draw[line width=2pt] (-0.52,1.8)-- (0.34,-0.18);
\draw (-0.1,5.27) node[anchor=north west] {$C_2$};
\draw (-1.54,0.86) node[anchor=north west] {$E_2'$};
\draw (-1.25,5.18) node[anchor=north west] {$x_1$};
\draw (-0.16,4.64) node[anchor=north west] {$x_2$};
\draw (0.98,4.52) node[anchor=north west] {$x_3$};
\draw (-0.6,2.5) node[anchor=north west] {$y$};
\draw (2.47,3.0) node[anchor=north west] {$v$};
\draw (-0.44,3.18) node[anchor=north west] {$w$};
\draw[line width=2pt] [shift={(1.1533333333333344,2.3883333333333328)}] plot[domain=2.741019162485899:3.47968647783315,variable=\t]({1.*1.7737476019872611*cos(\t r)+0.*1.7737476019872611*sin(\t r)},{0.*1.7737476019872611*cos(\t r)+1.*1.7737476019872611*sin(\t r)});
\draw[line width=2pt] [shift={(-0.2493178588346757,3.824381809568925)}] plot[domain=1.8179878812509616:4.411876729179429,variable=\t]({1.*0.7793064407983076*cos(\t r)+0.*0.7793064407983076*sin(\t r)},{0.*0.7793064407983076*cos(\t r)+1.*0.7793064407983076*sin(\t r)});
\draw[line width=2pt] [shift={(-0.6599236641221362,3.8823664122137402)}] plot[domain=4.932981136357144:6.207267812284321,variable=\t]({1.*0.8222921526804721*cos(\t r)+0.*0.8222921526804721*sin(\t r)},{0.*0.8222921526804721*cos(\t r)+1.*0.8222921526804721*sin(\t r)});
\draw[line width=2pt] [shift={(4.19509615384616,5.648076923076929)}] plot[domain=3.5215826090441795:4.342449191591928,variable=\t]({1.*3.526658482703467*cos(\t r)+0.*3.526658482703467*sin(\t r)},{0.*3.526658482703467*cos(\t r)+1.*3.526658482703467*sin(\t r)});
\draw (-0.72,4.22) node[anchor=north west] {$Y$};
\draw (2.16,4.42) node[anchor=north west] {$W$};
\draw (-2.28,3.98) node[anchor=north west] {$X$};
\draw (0.88,2.48) node[anchor=north west] {$Z$};
\begin{scriptsize}
\draw [fill=qqqqff] (-0.42,5.22) circle (2.5pt);
\draw [fill=qqqqff] (-0.44,4.58) circle (2.5pt);
\draw [fill=qqqqff] (0.16,3.82) circle (2.5pt);
\draw [fill=qqqqff] (0.92,4.34) circle (2.5pt);
\draw [fill=qqqqff] (0.98,4.98) circle (2.5pt);
\draw [fill=qqqqff] (-2.68,1.44) circle (2.5pt);
\draw [fill=qqqqff] (0.34,-0.18) circle (2.5pt);
\draw [fill=qqqqff] (2.92,2.36) circle (2.5pt);
\draw [fill=qqqqff] (-0.52,1.8) circle (2.5pt);
\draw [fill=qqqqff] (-0.48,3.08) circle (2.5pt);
\end{scriptsize}
\end{tikzpicture}

    \label{Fig_02_a_1.tikz}}
  \subfigure[]{%\input{Fig_02_a_2.tikz}
    \begin{tikzpicture}[line cap=round,line join=round,>=triangle 45,x=0.7cm,y=0.7cm]
\clip(-3.6,-1.00) rectangle (3.70,5.70);
%\draw(-3.6,-1.00) rectangle (3.70,5.70);
\draw[line width=2pt] (-0.42,5.22)-- (-0.44,4.58);
\draw[line width=2pt] (-0.44,4.58)-- (0.16,3.82);
\draw[line width=2pt] (0.16,3.82)-- (0.92,4.34);
\draw[line width=2pt] (0.92,4.34)-- (0.98,4.98);
\draw[line width=2pt] [shift={(0.23317073170731736,4.826829268292684)}] plot[domain=0.20228948488978113:2.599746620763336,variable=\t]({1.*0.7623747300575605*cos(\t r)+0.*0.7623747300575605*sin(\t r)},{0.*0.7623747300575605*cos(\t r)+1.*0.7623747300575605*sin(\t r)});
\draw[line width=2pt] [shift={(-1.3065030674846625,3.184417177914111)}] plot[domain=1.160063019204046:4.045398549712445,variable=\t]({1.*2.220244381646054*cos(\t r)+0.*2.220244381646054*sin(\t r)},{0.*2.220244381646054*cos(\t r)+1.*2.220244381646054*sin(\t r)});
\draw[line width=2pt] [shift={(-1.1099782055938978,0.7418924809298947)}] plot[domain=2.723198108469533:5.7168591671619815,variable=\t]({1.*1.718232389140763*cos(\t r)+0.*1.718232389140763*sin(\t r)},{0.*1.718232389140763*cos(\t r)+1.*1.718232389140763*sin(\t r)});
\draw[line width=2pt] [shift={(1.4825745220311104,1.2397471390392654)}] plot[domain=-2.2484410191826436:0.662020010267545,variable=\t]({1.*1.822404586584658*cos(\t r)+0.*1.822404586584658*sin(\t r)},{0.*1.822404586584658*cos(\t r)+1.*1.822404586584658*sin(\t r)});
\draw[line width=2pt] [shift={(2.030801134349521,3.7298298475717826)}] plot[domain=-0.9950323237592027:2.269764756630019,variable=\t]({1.*1.6331284193144862*cos(\t r)+0.*1.6331284193144862*sin(\t r)},{0.*1.6331284193144862*cos(\t r)+1.*1.6331284193144862*sin(\t r)});
\draw[line width=2pt] (-2.68,1.44)-- (-0.52,1.8);
\draw[line width=2pt] (-0.52,1.8)-- (0.34,-0.18);
\draw (-0.017504737524938748,5.40) node[anchor=north west] {$C_2$};
\draw (-1.5467112851369347,0.9071848447599059) node[anchor=north west] {$E_2'$};
\draw (-1.2,5.1) node[anchor=north west] {$x_1$};
\draw (-0.2,4.60) node[anchor=north west] {$x_2$};
\draw (0.9,4.85) node[anchor=north west] {$x_3$};
\draw (-0.60,2.4) node[anchor=north west] {$y$};
\draw (0.40,3.36) node[anchor=north west] {$w$};
\draw (2.6,3.05) node[anchor=north west] {$v$};
\draw[line width=2pt] [shift={(1.1881542193072303,3.141995562034325)}] plot[domain=2.418130807054263:3.8075164224901368,variable=\t]({1.*2.1722667712453996*cos(\t r)+0.*2.1722667712453996*sin(\t r)},{0.*2.1722667712453996*cos(\t r)+1.*2.1722667712453996*sin(\t r)});
\draw[line width=2pt] [shift={(0.918705883696099,3.531651337826396)}] plot[domain=2.778399084417675:4.713983402948865,variable=\t]({1.*0.8116523695107318*cos(\t r)+0.*0.8116523695107318*sin(\t r)},{0.*0.8116523695107318*cos(\t r)+1.*0.8116523695107318*sin(\t r)});
\draw[line width=2pt] [shift={(0.5096774102310706,3.53)}] plot[domain=-1.101905668217869:1.1019056682178683,variable=\t]({1.*0.9080003456357711*cos(\t r)+0.*0.9080003456357711*sin(\t r)},{0.*0.9080003456357711*cos(\t r)+1.*0.9080003456357711*sin(\t r)});
\draw[line width=2pt] [shift={(2.0169398610202194,3.078554783445665)}] plot[domain=3.457513367713698:5.611078716593286,variable=\t]({1.*1.1540531146471664*cos(\t r)+0.*1.1540531146471664*sin(\t r)},{0.*1.1540531146471664*cos(\t r)+1.*1.1540531146471664*sin(\t r)});
\draw (-2.29955143165361,3.636230375882849) node[anchor=north west] {$X$};
\draw (0.5941778815198596,1.6129724821192877) node[anchor=north west] {$Y$};
\draw (0.5,4.012650449141186) node[anchor=north west] {$Z$};
\draw (2.029279410817271,4.412596776978169) node[anchor=north west] {$W$};
\begin{scriptsize}
\draw [fill=qqqqff] (-0.42,5.22) circle (2.5pt);
\draw [fill=qqqqff] (-0.44,4.58) circle (2.5pt);
\draw [fill=qqqqff] (0.16,3.82) circle (2.5pt);
\draw [fill=qqqqff] (0.92,4.34) circle (2.5pt);
\draw [fill=qqqqff] (0.98,4.98) circle (2.5pt);
\draw [fill=qqqqff] (-2.68,1.44) circle (2.5pt);
\draw [fill=qqqqff] (0.34,-0.18) circle (2.5pt);
\draw [fill=qqqqff] (2.92,2.36) circle (2.5pt);
\draw [fill=qqqqff] (-0.52,1.8) circle (2.5pt);
\draw [fill=qqqqff] (0.92,2.72) circle (2.5pt);
\end{scriptsize}
\end{tikzpicture}\label{Fig_02_a_2.tikz}}
  \caption{Diagrams used in the proof of
    Proposition~\ref{prop:not_two_two}, Step~6, case (i).}
\end{figure}

\begin{figure}
  \centering%\input{Figura4.tikz}
  \begin{tikzpicture}[line cap=round,line join=round,>=triangle 45,x=0.8cm,y=0.8cm]
\clip(-4.16,0.6) rectangle (10.6,6.26);
\draw [shift={(-0.6230642496536478,4.79)},line width=2.pt]  plot[domain=1.0471975511965987:2.0943951023931957,variable=\t]({1.*1.073871500692704*cos(\t r)+0.*1.073871500692704*sin(\t r)},{0.*1.073871500692704*cos(\t r)+1.*1.073871500692704*sin(\t r)});
\draw [line width=2.pt] (-1.16,5.72)-- (-1.16,5.1);
\draw [line width=2.pt] (-1.16,5.1)-- (-0.6230642496536478,4.79);
\draw [line width=2.pt] (-0.6230642496536478,4.79)-- (-0.08612849930729616,5.1);
\draw [line width=2.pt] (-0.08612849930729616,5.1)-- (-0.08612849930729688,5.72);
\draw [shift={(-2.2840406529103787,4.47348167539267)},line width=2.pt]  plot[domain=0.8370184212254427:3.86693179067324,variable=\t]({1.*1.6784741055426067*cos(\t r)+0.*1.6784741055426067*sin(\t r)},{0.*1.6784741055426067*cos(\t r)+1.*1.6784741055426067*sin(\t r)});
\draw [shift={(-1.9082278481012658,2.6588607594936704)},line width=2.pt]  plot[domain=2.735765011175732:5.357728925073849,variable=\t]({1.*1.7760283191126258*cos(\t r)+0.*1.7760283191126258*sin(\t r)},{0.*1.7760283191126258*cos(\t r)+1.*1.7760283191126258*sin(\t r)});
\draw [shift={(0.3247632508833923,2.9569611307420494)},line width=2.pt]  plot[domain=-2.1668688331381425:0.19550105591720343,variable=\t]({1.*2.074759975295327*cos(\t r)+0.*2.074759975295327*sin(\t r)},{0.*2.074759975295327*cos(\t r)+1.*2.074759975295327*sin(\t r)});
\draw [shift={(0.9225363739973566,4.317776091219923)},line width=2.pt]  plot[domain=-0.5877458609449349:2.1943794655589834,variable=\t]({1.*1.7273206757846618*cos(\t r)+0.*1.7273206757846618*sin(\t r)},{0.*1.7273206757846618*cos(\t r)+1.*1.7273206757846618*sin(\t r)});
\draw [line width=2.pt] (-3.54,3.36)-- (-2.,4.);
\draw [line width=2.pt] (-2.,4.)-- (-0.62,3.28);
\draw [line width=2.pt] (-0.62,3.28)-- (-0.84,1.24);
\draw [shift={(6.916935750346353,4.71)},line width=2.pt]  plot[domain=1.0471975511965987:2.0943951023931957,variable=\t]({1.*1.073871500692704*cos(\t r)+0.*1.073871500692704*sin(\t r)},{0.*1.073871500692704*cos(\t r)+1.*1.073871500692704*sin(\t r)});
\draw [shift={(5.25595934708962,4.39348167539267)},line width=2.pt]  plot[domain=0.8370184212254421:3.8669317906732403,variable=\t]({1.*1.6784741055426076*cos(\t r)+0.*1.6784741055426076*sin(\t r)},{0.*1.6784741055426076*cos(\t r)+1.*1.6784741055426076*sin(\t r)});
\draw [shift={(5.631772151898734,2.5788607594936708)},line width=2.pt]  plot[domain=2.735765011175732:5.357728925073849,variable=\t]({1.*1.7760283191126254*cos(\t r)+0.*1.7760283191126254*sin(\t r)},{0.*1.7760283191126254*cos(\t r)+1.*1.7760283191126254*sin(\t r)});
\draw [shift={(7.8647632508833905,2.8769611307420493)},line width=2.pt]  plot[domain=-2.1668688331381407:0.1955010559172032,variable=\t]({1.*2.0747599752953256*cos(\t r)+0.*2.0747599752953256*sin(\t r)},{0.*2.0747599752953256*cos(\t r)+1.*2.0747599752953256*sin(\t r)});
\draw [shift={(8.462536373997356,4.237776091219921)},line width=2.pt]  plot[domain=-0.587745860944934:2.1943794655589826,variable=\t]({1.*1.7273206757846618*cos(\t r)+0.*1.7273206757846618*sin(\t r)},{0.*1.7273206757846618*cos(\t r)+1.*1.7273206757846618*sin(\t r)});
\draw [line width=2.pt] (6.38,5.64)-- (6.38,5.02);
\draw [line width=2.pt] (6.38,5.02)-- (6.916935750346353,4.71);
\draw [line width=2.pt] (6.916935750346353,4.71)-- (7.453871500692704,5.02);
\draw [line width=2.pt] (7.453871500692704,5.02)-- (7.453871500692703,5.64);
\draw [line width=2.pt] (4.,3.28)-- (5.54,3.92);
\draw [line width=2.pt] (5.54,3.92)-- (6.92,3.2);
\draw [line width=2.pt] (6.92,3.2)-- (6.7,1.16);
\draw [line width=2.pt] (6.38,5.02)-- (5.54,3.92);
\draw [line width=2.pt] (6.916935750346353,4.71)-- (6.92,3.2);
\draw [line width=2.pt] (7.453871500692704,5.02)-- (9.9,3.28);
\draw (-1.82-0.08,5.52) node[anchor=north west] {${x_1}$};
\draw (-1.02,4.88-0.12) node[anchor=north west] {${x_2}$};
\draw (0.,5.52) node[anchor=north west] {${x_3}$};     
\draw (5.62,5.52) node[anchor=north west] {${x_1}$};
\draw (6.84+0.08,4.84) node[anchor=north west] {${x_2}$};
\draw (7.58-0.08,5.48) node[anchor=north west] {${x_3}$};
\draw (2.52,3.74-0.10) node[anchor=north west] {${v}$};
\draw (10.06,3.74-0.12) node[anchor=north west] {${v}$};
\draw (-2.72,4.68-0.12) node[anchor=north west] {${y_1}$};
\draw (-0.56,3.7) node[anchor=north west] {${y_2}$};
\draw (4.74,4.58-0.10) node[anchor=north west] {${y_1}$};
\draw (6.96+0.06,3.5) node[anchor=north west] {${y_2}$};
\draw (-1.04,6.02-0.28) node[anchor=north west] {${C_2}$};
\draw (6.46,5.92-0.22) node[anchor=north west] {${C_2}$};
\draw (-2.62,3.02) node[anchor=north west] {${E'_2}$};
\draw (5.1,2.94) node[anchor=north west] {${E'_2}$};
\draw (4.5,5.62) node[anchor=north west] {${X}$};
\draw (6.+0.08,4.76-0.18) node[anchor=north west] {${Y}$};
\draw (8.34,3.12) node[anchor=north west] {${Z}$};
\draw (8.6,5.54) node[anchor=north west] {${W}$};
\draw [fill=qqqqff] (-1.16,5.72) circle (2.5pt);
\draw [fill=qqqqff] (-1.16,5.1) circle (2.5pt);
\draw [fill=qqqqff] (-0.6230642496536478,4.79) circle (2.5pt);
\draw [fill=qqqqff] (-0.08612849930729616,5.1) circle (2.5pt);
\draw [fill=qqqqff] (-0.08612849930729688,5.72) circle (2.5pt);
\draw [fill=qqqqff] (-2.,4.) circle (2.5pt);
\draw [fill=qqqqff] (-0.62,3.28) circle (2.5pt);
\draw [fill=qqqqff] (-3.54,3.36) circle (2.5pt);
\draw [fill=qqqqff] (-0.84,1.24) circle (2.5pt);
\draw [fill=qqqqff] (2.36,3.36) circle (2.5pt);
\draw [fill=qqqqff] (6.38,5.64) circle (2.5pt);
\draw [fill=qqqqff] (6.38,5.64) circle (2.5pt);
\draw [fill=qqqqff] (4.,3.28) circle (2.5pt);
\draw [fill=qqqqff] (4.,3.28) circle (2.5pt);
\draw [fill=qqqqff] (6.7,1.16) circle (2.5pt);
\draw [fill=qqqqff] (6.7,1.16) circle (2.5pt);
\draw [fill=qqqqff] (9.9,3.28) circle (2.5pt);
\draw [fill=qqqqff] (9.9,3.28) circle (2.5pt);
\draw [fill=qqqqff] (7.453871500692703,5.64) circle (2.5pt);
\draw [fill=qqqqff] (6.38,5.64) circle (2.5pt);
\draw [fill=qqqqff] (6.38,5.02) circle (2.5pt);
\draw [fill=qqqqff] (6.38,5.02) circle (2.5pt);
\draw [fill=qqqqff] (6.916935750346353,4.71) circle (2.5pt);
\draw [fill=qqqqff] (6.916935750346353,4.71) circle (2.5pt);
\draw [fill=qqqqff] (7.453871500692704,5.02) circle (2.5pt);
\draw [fill=qqqqff] (7.453871500692704,5.02) circle (2.5pt);
\draw [fill=qqqqff] (7.453871500692703,5.64) circle (2.5pt);
\draw [fill=qqqqff] (4.,3.28) circle (2.5pt);
\draw [fill=qqqqff] (5.54,3.92) circle (2.5pt);
\draw [fill=qqqqff] (5.54,3.92) circle (2.5pt);
\draw [fill=qqqqff] (6.92,3.2) circle (2.5pt);
\draw [fill=qqqqff] (6.92,3.2) circle (2.5pt);
\draw [fill=qqqqff] (6.7,1.16) circle (2.5pt);
\end{tikzpicture}

  \caption{Diagram used in the proof of
    Proposition~\ref{prop:not_two_two}, Step~6, case (ii).}
  \label{Figura4.tikz}
\end{figure}
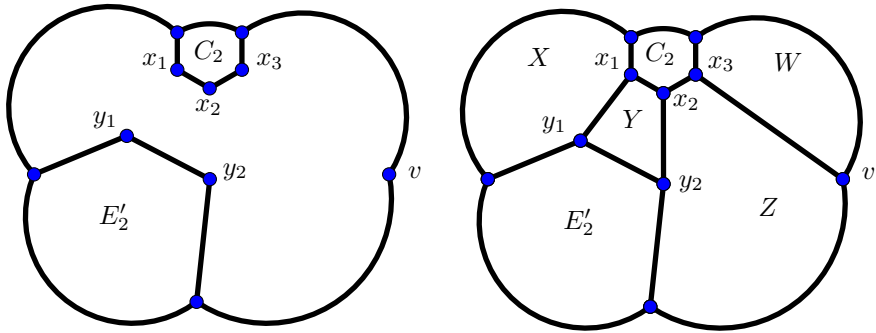

\emph{Step 6}: we claim that $E_0$ has not five edges. Suppose by
contradiction that $E_0$ has exactly five edges and consider two possible
cases: $E_2'$ has either (i) three or (ii) four edges.

If $E_2'$ has three edges the region $E_2 = C_2 \cup E_2'$ has 8
distinct vertices (since $C_2$ has five vertices). Three vertices of
$C_2$ (let us call them $x_1$, $x_2$ and $x_3$) are not vertices of
$E_0$, and one vertex of $E_2'$ (let us call it $y$) is not a vertex
of $E_0$. On the other hand $E_0$ has five vertices, and four of them
are shared by $C_2$ and $E_2'$. We denote by $v$ the remaining
vertex. Up to now we have considered 9 vertices in total, since the
cluster $\E$ has exactly 10 vertices, there is an additional vertex
$w$ belonging to neither $E_0$ nor $E_2$.
The situation is depicted in Figure~\ref{Fig_02_a.tikz}.
We see that 11 edges have been
already identified, so 4 edges are missing.

Consider the three edges
which meet in the vertex $w$. At least two of them should connect $w$
to the vertices $x_k$ of $C_2$. In fact if only one edges connects $w$
to $C_2$ the other two edges of $w$ should go to $v$ and $y$ and hence
the two remaining vertices of $C_2$ should be joined together which
is not admitted (we would obtain a two sided component). Not all three
edges of $w$ can join the three free vertices of $C_2$ because
otherwise we would obtain two three-sided internal components. But we
know that at most one component can be internal. So, exactly two edges
join $w$ with two vertices of $C_2$. The two vertices of $C_2$ must be
consecutive, otherwise the third vertex $x_2$ could not be connected to
anything (the edge would be closed in the loop: $w$, $x_3$, $x_2$
$x_1$). We have two possibilities: the two vertices are either $x_1$
and $x_2$ or $x_2$ and $x_3$ (the order of the vertices is given by
the Figure, where $x_1$ is ``closer'' to the component $E_2'$).

In the first case ($x_1$ and $x_2$ are joined to $w$) the third edge
in $w$ cannot go to $x_3$ (already excluded) and cannot go to $v$
because otherwise the edge from $x_3$ to $y$ would cross the already
defined edges. So the diagram is completed by an edge joining $w$ with
$y$ and an edge joining $v$ with $x_3$.
The resulting diagram is depicted in Figure~\ref{Fig_02_a_1.tikz}.
We know that $C_1$ is external and has four or five edges: the only
possibility is $X=C_1$. Then $E_1'$ must be adjacent to $E_2'$ so it
must be $Z=E_1'$: however
$E_1'$ cannot be adjacent to $C_1$ and we get a contradiction.

In the second case ($x_2$ and $x_3$ are joined to $w$) we can complete
the diagram in a unique way, by adding an edge from $w$ to $v$ and an
edge from $y$ to $x_1$ as represented in Figure \ref{Fig_02_a_2.tikz}.
 In this case we have $X=E_1'$ since $E_1'$
must be adjacent to $E_2'$ but cannot have six edges. So $W=C_1$
because $C_1$ is external and not adjacent to $E_1'$. So $Y$ and $Z$
are the two connected regions $E_3$ and $E_4$. However in \emph{Step 4}  we
proved that the connected region, if internal, must be adjacent to
both $E_1'$ and $E_2'$ which is not the case for the component
$Z$. So this configuration must be excluded, too.

So, the case when $E_2'$ has only three edges has been completed and
excluded. Suppose now (ii) that $E_2'$ has four edges. In this case no
additional vertex must be added, and we are in the situation depicted
in Figure \ref{Figura4.tikz}. Let $x_1$, $x_2$ and $x_3$ be the free vertices of
$C_2$ and $v$ be the free vertex of $E_0$, as before. Let $y_1$ and
$y_2$ be the two free vertices of $E_2'$. There are three edges
missing in the diagram and there is only one possibility (since the
edges from $C_2$ cannot go back to $C_2$ and they cannot cross each
other): $x_1$ is joined to $y_1$, $x_2$ to $y_2$ and $x_3$ to $v$.
The component $C_1$ is external with four or five edges, hence $C_1$ is
either $X$ or $Z$. The component $E_1'$ is adjacent to $E_2'$ but
cannot be adjacent to $C_1$ hence $E_1'$ is either $X$ or $Z$. So $Y$
and $W$ are the two connected regions $E_3$ and $E_4$: say $Y=E_3$ and
$W=E_4$.

But now we notice that $E_3$ is internal and $E_4$ is external with only three
edges, hence by Step~5 they should be adjacent, which is not the case.

\emph{Step 7}: conclusion. We know now that
$E_0$ has six edges. Recall that $C_2$ is external and has five
vertices, two of which are shared with $E_0$
while $E_2'$ has at least three vertices (all distinct from $C_2$)
two of which are shared with the vertices of $E_0$. So we
have identified 6 vertices of $E_0$ and at least $3+1=4$ internal
vertices of $E_2=C_2 \cup E_2'$. We know that the cluster has 10
vertices in total, so we have identified all of them. In particular we
conclude that $E_2'$ has three vertices. Let $x_1$, $x_2$ and $x_3$ be
the three internal vertices of $C_2$ and let $v$ be the internal
vertex of $E_2'$.

If we look at the edges, we have already identified the six edges of
$E_0$, other four are the internal edges of $C_2$ and other two are
the internal edges of $E_2'$. To reach the total of 15 edges, we need
to place other three edges. No edge can join two points of $C_2$
(otherwise a two sided component would rise). So the three missing
edges start from the three internal points of $C_2$. One of them goes
to the internal vertex of $E_2'$ and the other two go to the two free
vertices of $E_0$.

There are now two
possibilities: either (a) the vertex $v$ is connected to the middle of
the three internal vertices of $C_2$ or (b) it is connected to one
lateral vertex (see Figure \ref{fig:ezero6lati})

\begin{figure}
  \centering
  \subfigure[]{%\input{Fig_01_a.tikz}
  \begin{tikzpicture}[line cap=round,line join=round,>=triangle
    45,x=1.0cm,y=1.0cm]
  \clip(1.7,0.28) rectangle (6.5,6.00);
  \draw[line width=2pt] (3.1,4.7)-- (3.12,3.78);
  \draw[line width=2pt] (3.12,3.78)-- (4.1,3.14);
  \draw[line width=2pt] (4.1,3.14)-- (5.,4.);
  \draw[line width=2pt] (5.,4.)-- (5.,5.);
  \draw[line width=2pt] [shift={(4.12132075471698,4.398301886792452)}]
  plot[domain=0.6004350063641383:2.854361401189685,variable=\t]({1.*1.0649496868531663*cos(\t
    r)+0.*1.0649496868531663*sin(\t r)},{0.*1.0649496868531663*cos(\t
    r)+1.*1.0649496868531663*sin(\t r)});
  \draw[line width=2pt] (4.1,3.14)-- (4.1,1.82);
  \draw[line width=2pt] (4.1,1.82)-- (3.003025321539324,1.18);
  \draw[line width=2pt] (4.1,1.82)-- (5.123025321539324,1.02);
  \draw[line width=2pt] (3.12,3.78)-- (2.28,3.4);
  \draw[line width=2pt] (5.,4.)-- (6.1,3.68);
  \draw[line width=2pt] [shift={(2.741169134840218,4.017724084177709)}]
  plot[domain=1.0866188360685638:4.071092025588856,variable=\t]({1.*0.7708826208330711*cos(\t
    r)+0.*0.7708826208330711*sin(\t r)},{0.*0.7708826208330711*cos(\t
    r)+1.*0.7708826208330711*sin(\t r)});
  \draw[line width=2pt] [shift={(2.9233057310147785,2.3817763627124)}]
  plot[domain=2.134265002914584:4.77862673523436,variable=\t]({1.*1.204417551801554*cos(\t
    r)+0.*1.204417551801554*sin(\t r)},{0.*1.204417551801554*cos(\t
    r)+1.*1.204417551801554*sin(\t r)});
  \draw[line width=2pt] [shift={(4.099629511175488,1.585005512679163)}]
  plot[domain=3.495380464155704:5.778739714956995,variable=\t]({1.*1.1690039409805677*cos(\t
    r)+0.*1.1690039409805677*sin(\t r)},{0.*1.1690039409805677*cos(\t
    r)+1.*1.1690039409805677*sin(\t r)});
  \draw[line width=2pt] [shift={(4.627250425997023,2.7115034888488663)}]
  plot[domain=-1.2856843678161134:0.5817071567969192,variable=\t]({1.*1.7626618506786451*cos(\t
    r)+0.*1.7626618506786451*sin(\t r)},{0.*1.7626618506786451*cos(\t
    r)+1.*1.7626618506786451*sin(\t r)});
  \draw[line width=2pt] [shift={(5.461740614334471,4.2664505119453935)}]
  plot[domain=-0.7431203943952518:2.132596946788658,variable=\t]({1.*0.8667751994324433*cos(\t
    r)+0.*0.8667751994324433*sin(\t r)},{0.*0.8667751994324433*cos(\t
    r)+1.*0.8667751994324433*sin(\t r)});
  \draw (3.60,4.505823063679203) node[anchor=north west]
        {$C_2$};
  \draw (3.87780417066169,6.05) node[anchor=north west] {$E_0$};
          \draw [fill=qqqqff] (3.1,4.7) circle (2.5pt);
          \draw [fill=qqqqff] (3.12,3.78) circle (2.5pt);
          \draw [fill=qqqqff] (4.1,3.14) circle (2.5pt);
          \draw [fill=qqqqff] (5.,4.) circle (2.5pt);
          \draw [fill=qqqqff] (5.,5.) circle (2.5pt);
          \draw [fill=qqqqff] (4.1,1.82) circle (2.5pt);
          \draw (4.4,2.076860436363506) node {$v$};
          \draw (4.3,1.0) node {$E_2'$};
          \draw [fill=qqqqff] (3.003025321539324,1.18) circle (2.5pt);
          \draw [fill=qqqqff] (5.123025321539324,1.02) circle (2.5pt);
          \draw [fill=qqqqff] (2.28,3.4) circle (2.5pt);
          \draw [fill=qqqqff] (6.1,3.68) circle (2.5pt);
\end{tikzpicture}
}
  \subfigure[]{%\input{Fig_01_b.tikz}
  \begin{tikzpicture}[line cap=round,line join=round,>=triangle
    45,x=1.0cm,y=1.0cm]
  \clip(8.30,2.0) rectangle (14.02,6.23);
  \draw[line width=2pt] (9.816642462132213,5.144989859727443)--
  (9.798155122901875,4.16516088051952);
  \draw[line width=2pt] (9.798155122901875,4.16516088051952)--
  (10.716390827937389,3.5849298560903002);
  \draw[line width=2pt] (10.716390827937389,3.5849298560903002)--
  (11.609914367475016,4.276084915901548);
  \draw[line width=2pt] (11.609914367475016,4.276084915901548)--
  (11.609914367475016,5.348350591261163);
  \draw[line width=2pt] (11.609914367475016,4.276084915901548)--
  (12.514662930507589,3.7002037088191595);
  \draw[line width=2pt] (12.514662930507589,3.7002037088191595)--
  (13.506018063975777,4.138244349188824);
  \draw[line width=2pt] (12.514662930507589,3.7002037088191595)--
  (12.606882012690676,2.8471771986256016);
  \draw[line width=2pt] (10.716390827937389,3.5849298560903002)--
  (10.670281286845844,2.616629493167883);
  \draw[line width=2pt] (9.798155122901875,4.16516088051952)--
  (8.825899643184101,3.7002037088191595);
  \draw[line width=2pt] [shift={(10.76730623933436,4.7702430455413785)}]
  plot[domain=0.6013313078618941:2.7661006672331054,variable=\t]({1.*1.0218594776322485*cos(\t
    r)+0.*1.0218594776322485*sin(\t r)},{0.*1.0218594776322485*cos(\t
    r)+1.*1.0218594776322485*sin(\t r)});
  \draw[line width=2pt] [shift={(9.378304647510683,4.383486757849381)}]
  plot[domain=1.048494845315568:4.0325125261776105,variable=\t]({1.*0.8786506779699703*cos(\t
    r)+0.*0.8786506779699703*sin(\t r)},{0.*0.8786506779699703*cos(\t
    r)+1.*0.8786506779699703*sin(\t r)});
  \draw[line width=2pt] [shift={(9.848784702593882,3.329811047936344)}]
  plot[domain=2.7941740840891423:5.568248638228706,variable=\t]({1.*1.0878807692020311*cos(\t
    r)+0.*1.0878807692020311*sin(\t r)},{0.*1.0878807692020311*cos(\t
    r)+1.*1.0878807692020311*sin(\t r)});
  \draw[line width=2pt] [shift={(11.59468783479973,3.100611391632405)}]
  plot[domain=3.6239097934821523:6.037848085603502,variable=\t]({1.*1.0434394778526175*cos(\t
    r)+0.*1.0434394778526175*sin(\t r)},{0.*1.0434394778526175*cos(\t
    r)+1.*1.0434394778526175*sin(\t r)});
  \draw[line width=2pt] [shift={(13.027147165927188,3.513118131475707)}]
  plot[domain=-2.133759216824422:0.9171091995872755,variable=\t]({1.*0.7874643643176354*cos(\t
    r)+0.*0.7874643643176354*sin(\t r)},{0.*0.7874643643176354*cos(\t
    r)+1.*0.7874643643176354*sin(\t r)});
  \draw[line width=2pt] [shift={(12.52507059365559,4.691753640074645)}]
  plot[domain=-0.5137253285695706:2.519237962492979,variable=\t]({1.*1.1263349744301199*cos(\t
    r)+0.*1.1263349744301199*sin(\t r)},{0.*1.1263349744301199*cos(\t
    r)+1.*1.1263349744301199*sin(\t r)});
  \draw (10.514570208435694,6.27) node[anchor=north west]
        {$E_0$};
  \draw (10.494674831805597,4.862192192007618) node[anchor=north west] {$C_2$};
  \draw [fill=qqqqff] (9.816642462132213,5.144989859727443)
          circle (2.5pt);
          \draw [fill=qqqqff] (9.798155122901875,4.16516088051952)
          circle (2.5pt);
          \draw [fill=qqqqff] (10.716390827937389,3.5849298560903002)
          circle (2.5pt);
          \draw [fill=qqqqff] (11.609914367475016,4.276084915901548)
          circle (2.5pt);
          \draw [fill=qqqqff] (11.609914367475016,5.348350591261163)
          circle (2.5pt);
          \draw [fill=qqqqff] (12.514662930507589,3.7002037088191595)
          circle (2.5pt);
          \draw (12.583689377965822,3.98) node {$v$};
          \draw (13.28,3.20) node {$E_2'$};
          \draw [fill=qqqqff] (13.506018063975777,4.138244349188824) circle (2.5pt);
          \draw [fill=qqqqff] (12.606882012690676,2.8471771986256016)
          circle (2.5pt);
          \draw [fill=qqqqff] (10.670281286845844,2.616629493167883)
          circle (2.5pt);
          \draw [fill=qqqqff] (8.825899643184101,3.7002037088191595)
          circle (2.5pt);
%          \draw [fill=qqqqff] (13.66740145779618,3.0546701335375483) circle (2.5pt);
\end{tikzpicture}
}
  \caption{Diagrams used in the proof of
    Proposition~\ref{prop:not_two_two} Step~7.}
  \label{fig:ezero6lati}
\end{figure}
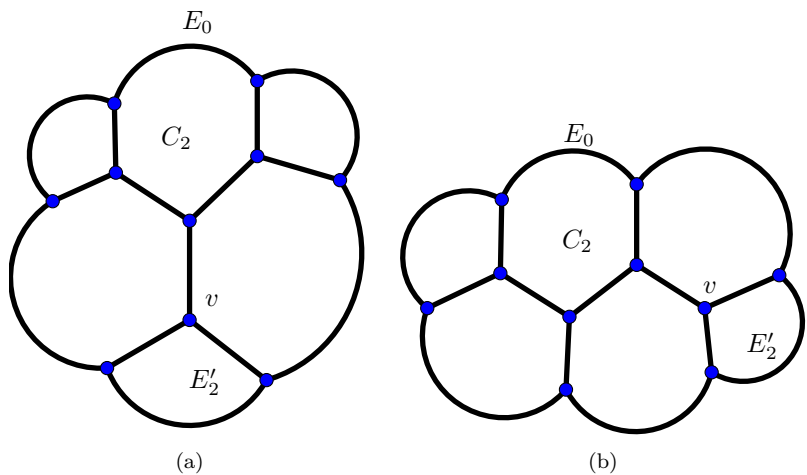

We can easily exclude case (a) because the component $C_1$ must be one
of the two five-sided components ($C_1$ has either four or five edges
and there are no components with four edges) while $E_1'$ must be
adjacent to $E_2'$ and hence must be the other component with five
edges. But this is a contradiction since $C_1$ cannot be adjacent to $E_1'$.

So we remain with the configuration of case (b). The
region with three edges adjacent to $C_2$ is not $C_1$ (because $C_1$
has four or five
edges) and it cannot be $E_1'$ because $E_1'$ must be adjacent to
$E_2'$. Hence we conclude that it is one of $E_3$ and $E_4$.
Let us say it is $E_3$. Then $E_4$ must be the
region with five edges, because otherwise $C_1$ and $E_1'$ would be
adjacent to each other. So $C_1$ has four edges and hence
$p_1\ge k_0/4$ is the region with higher pressure and $p_2\ge k_0/8$ is the
second higher pressure while $p_3+p_4 \le k_0/8$.

We know that $E_3$ has three edges, $E_4$ has five edges
and both $E_3$ and $E_4$ are external. Let $L_{j,k}$ be the total
length of the edges between $E_j$ and $E_k$. Applying Proposition \ref{prop:varI}
we obtain, for $j=3,4$:
\begin{equation}\label{eq:Lj0}
  L_{j,0} \le 2 \sqrt{\pi} \sqrt{m(E_j)} = 2 \sqrt{\pi}
\end{equation}
Since $p_1$ and $p_2$ are the
largest pressures and $E_3$ is not adjacent to $E_4$
we have,
for $j=3,4$
\[
L_{j,0}\, p_j \ge \sum_{k=1}^4 L_{j,k} (p_j - p_k)
\]
hence, by Lemma~\ref{lem:turning_angle}
\begin{equation}\label{eq:pressioni34}
L_{3,0}\, p_3 \ge \pi, \qquad
L_{4,0}\, p_4 \ge \frac{\pi}{3}
\end{equation}
and putting together with \eqref{eq:Lj0} we obtain
\[
p_3 \ge \frac{\pi}{L_{3,0}} \ge  \frac{\sqrt \pi}{2}, \qquad
p_4 \ge \frac{\pi}{3L_{4,0}} \ge  \frac{\sqrt \pi}{6}.
\]

Now we are going to improve the estimates on $p_1$ and $p_2$.
First notice that if we denote by $\ell_i$ the length of the external
edge of $C_i$ we have, by Proposition~\ref{prop:varII} (notice that $m(C_i)<k_3<1$),
\[
\ell_i \le \frac{m(C_i)}{\lvert 2-m(C_i)\rvert} P(\E)
\le \frac{m(C_i)}{2-k_3}k_0
\]
while, by the isoperimetric inequality, we have
\[
 P(C_i) \ge 2 \sqrt{\pi}\sqrt{m(C_i)}.
\]
Now, applying Proposition \ref{prop:pressure_perimeter} to the component $C_i$ with $i=1,2$,
which has $n_i = i+3$ edges,
we have
\begin{align*}
p_i & \ge \frac{(6-n_i) \pi}{3 P(C_i)}  + p_{\mathrm{min}} \left(1-\frac{\ell_i}{P(C_i)}\right) \\
& \ge \frac{(3-i)\pi}{3 k_9} + \frac{\sqrt \pi}{6}\left(1-
\frac{m(C_i)k_0}{(2-k_3) 2 \sqrt \pi \sqrt{m(C_i)}}\right)\\
& = \frac{(3-i)\pi}{3 k_9} + \frac{\sqrt \pi}{6}\left(1-
\frac{\sqrt{m(C_i)}k_0}{2\sqrt \pi (2-k_3)}\right)\\
& \ge\frac{(3-i)\pi}{3 k_9} + \frac{\sqrt \pi}{6}\left(1-
\frac{\sqrt{k_3}k_0}{2\sqrt \pi (2-k_3)}\right)
\ge \frac{(3-i)\pi}{3 k_9} + c_7
\end{align*}
with $c_7 \defeq 0.1992$. %%0.1328
%% \[
%% p_i \ge \frac{(6-n_i) \pi}{3 P(C_i)}  + \min\{p_1,p_2,p_3,p_4\}
%% \ge \frac{(3-i)\pi}{3 k_9} + \frac{\sqrt \pi}{6}
%% \]
%%and,
By using \eqref{eq:pressioni34}%%(...) and (...)
\begin{align*}
P(\E) &= 2(p_1+p_2+p_3+p_4)
\ge 2 \left(
\frac{2\pi}{3k_9} + c_7
+ \frac{\pi}{3k_9} + c_7
+ \frac{\sqrt  \pi}{2}
+ \frac{\sqrt \pi}{6}
\right)\\
&=  \frac{2\pi}{k_9} + 4 c_7 + \frac{4}{3} \sqrt \pi
\geq 11.9428 > k_0%%11.4116
\end{align*}
which is a contradiction.

\end{proof}

\section{Clusters with five components}\label{sec:five}

In this section we consider a weak minimizer $\E \in \M^*(1,1,1,1)$
with five bound\-ed compo\-nents. Only one region is disconnected: we
will assume the region is $E_1$ and we denote with $E_1'$ and $C_1$
respectively, its big and small connected
components.

%% We recall that $m(C_1) \in [k_2, k_1]$ by Proposition~\ref{prop:k_2} and Proposition~\ref{prop:k_1}.
%% Then recall that by Proposition~\ref{prop:edges} we know that
%% $C_1$ and $E_1'$ have three or four edges and they have exactly
%% three edges if they are internal while the connected regions $E_2$, $E_3$
%% and $E_4$ have at least three edges, at most four if they are
%% internal and at most five if they are external.
%% By Proposition \ref{prop:edges} we know that the cluster $\E$ has 8 vertices and 12 edges.

\begin{figure}
\centering
\begin{tikzpicture}[line cap=round,line join=round,>=triangle 45,x=0.6cm,y=0.6cm]
\clip(-1.25,-4.9) rectangle (18.2,6.8);
%\draw(-1.25,-4.9) rectangle (18.2,6.8);
\draw [shift={(2,5.45)},line width=2pt]  plot[domain=0.39:2.75,variable=\t]({1*0.91*cos(\t r)+0*0.91*sin(\t r)},{0*0.91*cos(\t r)+1*0.91*sin(\t r)});
\draw [shift={(2,4.04)},line width=2pt]  plot[domain=3.44:5.99,variable=\t]({1*1.44*cos(\t r)+0*1.44*sin(\t r)},{0*1.44*cos(\t r)+1*1.44*sin(\t r)});
\draw [shift={(2,3.56)},line width=2pt]  plot[domain=3.39:6.03,variable=\t]({1*2.56*cos(\t r)+0*2.56*sin(\t r)},{0*2.56*cos(\t r)+1*2.56*sin(\t r)});
\draw [shift={(0.59,4.21)},line width=2pt]  plot[domain=1.23:4.02,variable=\t]({1*1.68*cos(\t r)+0*1.68*sin(\t r)},{0*1.68*cos(\t r)+1*1.68*sin(\t r)});
\draw [line width=2pt] (1.16,5.8)-- (1.4,5.12);
\draw [line width=2pt] (1.4,5.12)-- (2.6,5.12);
\draw [line width=2pt] (2.6,5.12)-- (2.84,5.8);
\draw [line width=2pt] (1.4,5.12)-- (0.62,3.62);
\draw [line width=2pt] (2.6,5.12)-- (3.38,3.62);
\draw [line width=2pt] (3.38,3.62)-- (4.48,2.92);
\draw [line width=2pt] (0.62,3.62)-- (-0.48,2.92);
\draw [shift={(3.41,4.21)},line width=2pt]  plot[domain=-0.88:1.91,variable=\t]({1*1.68*cos(\t r)+0*1.68*sin(\t r)},{0*1.68*cos(\t r)+1*1.68*sin(\t r)});
\draw [shift={(8.93,3.65)},line width=2pt]  plot[domain=3.42:4.34,variable=\t]({1*1.28*cos(\t r)+0*1.28*sin(\t r)},{0*1.28*cos(\t r)+1*1.28*sin(\t r)});
\draw [shift={(7.99,3.65)},line width=2pt]  plot[domain=5.09:6.01,variable=\t]({1*1.28*cos(\t r)+0*1.28*sin(\t r)},{0*1.28*cos(\t r)+1*1.28*sin(\t r)});
\draw [shift={(8.46,2.87)},line width=2pt]  plot[domain=1.25:1.89,variable=\t]({1*1.97*cos(\t r)+0*1.97*sin(\t r)},{0*1.97*cos(\t r)+1*1.97*sin(\t r)});
\draw [shift={(8.51,3.95)},line width=2pt]  plot[domain=2.27:3.82,variable=\t]({1*1.04*cos(\t r)+0*1.04*sin(\t r)},{0*1.04*cos(\t r)+1*1.04*sin(\t r)});
\draw [shift={(8.41,3.95)},line width=2pt]  plot[domain=-0.68:0.87,variable=\t]({1*1.04*cos(\t r)+0*1.04*sin(\t r)},{0*1.04*cos(\t r)+1*1.04*sin(\t r)});
\draw [line width=2pt] (8.46,2.46)-- (8.46,1.98);
\draw [line width=2pt] (7.84,4.74)-- (7.12,5.7);
\draw [line width=2pt] (9.08,4.74)-- (9.8,5.7);
\draw [shift={(8.46,4.81)},line width=2pt]  plot[domain=0.58:2.56,variable=\t]({1*1.61*cos(\t r)+0*1.61*sin(\t r)},{0*1.61*cos(\t r)+1*1.61*sin(\t r)});
\draw [shift={(7.96,3.9)},line width=2pt]  plot[domain=2.01:4.97,variable=\t]({1*1.99*cos(\t r)+0*1.99*sin(\t r)},{0*1.99*cos(\t r)+1*1.99*sin(\t r)});
\draw [shift={(8.96,3.9)},line width=2pt]  plot[domain=-1.83:1.13,variable=\t]({1*1.99*cos(\t r)+0*1.99*sin(\t r)},{0*1.99*cos(\t r)+1*1.99*sin(\t r)});
\draw [shift={(8.46,10.5)},line width=2pt]  plot[domain=4.61:4.82,variable=\t]({1*7.24*cos(\t r)+0*7.24*sin(\t r)},{0*7.24*cos(\t r)+1*7.24*sin(\t r)});
\draw [shift={(15.5,2.8)},line width=2pt]  plot[domain=4.1:5.33,variable=\t]({1*1.52*cos(\t r)+0*1.52*sin(\t r)},{0*1.52*cos(\t r)+1*1.52*sin(\t r)});
\draw [shift={(15.5,4.26)},line width=2pt]  plot[domain=4.4:5.02,variable=\t]({1*2.1*cos(\t r)+0*2.1*sin(\t r)},{0*2.1*cos(\t r)+1*2.1*sin(\t r)});
\draw [shift={(15.9,1.51)},line width=2pt]  plot[domain=2.52:3.1,variable=\t]({1*1.28*cos(\t r)+0*1.28*sin(\t r)},{0*1.28*cos(\t r)+1*1.28*sin(\t r)});
\draw [shift={(15.1,1.51)},line width=2pt]  plot[domain=0.04:0.62,variable=\t]({1*1.28*cos(\t r)+0*1.28*sin(\t r)},{0*1.28*cos(\t r)+1*1.28*sin(\t r)});
\draw [shift={(15.75,3.14)},line width=2pt]  plot[domain=1.77:3.92,variable=\t]({1*1.25*cos(\t r)+0*1.25*sin(\t r)},{0*1.25*cos(\t r)+1*1.25*sin(\t r)});
\draw [shift={(15.25,3.14)},line width=2pt]  plot[domain=-0.78:1.37,variable=\t]({1*1.25*cos(\t r)+0*1.25*sin(\t r)},{0*1.25*cos(\t r)+1*1.25*sin(\t r)});
\draw [line width=2pt] (15.5,5.12)-- (15.5,4.36);
\draw [shift={(16.19,7.58)},line width=2pt]  plot[domain=3.92:4.44,variable=\t]({1*2.56*cos(\t r)+0*2.56*sin(\t r)},{0*2.56*cos(\t r)+1*2.56*sin(\t r)});
\draw [shift={(15.5,5.33)},line width=2pt]  plot[domain=0.38:2.76,variable=\t]({1*1.21*cos(\t r)+0*1.21*sin(\t r)},{0*1.21*cos(\t r)+1*1.21*sin(\t r)});
\draw [shift={(14.81,7.58)},line width=2pt]  plot[domain=4.99:5.5,variable=\t]({1*2.56*cos(\t r)+0*2.56*sin(\t r)},{0*2.56*cos(\t r)+1*2.56*sin(\t r)});
\draw [shift={(15.12,3.71)},line width=2pt]  plot[domain=1.91:4.49,variable=\t]({1*2.2*cos(\t r)+0*2.2*sin(\t r)},{0*2.2*cos(\t r)+1*2.2*sin(\t r)});
\draw [shift={(15.88,3.71)},line width=2pt]  plot[domain=-1.34:1.23,variable=\t]({1*2.2*cos(\t r)+0*2.2*sin(\t r)},{0*2.2*cos(\t r)+1*2.2*sin(\t r)});
\draw [shift={(2,-0.55)},line width=2pt]  plot[domain=0.6:2.54,variable=\t]({1*0.87*cos(\t r)+0*0.87*sin(\t r)},{0*0.87*cos(\t r)+1*0.87*sin(\t r)});
\draw [shift={(5.09,5.95)},line width=2pt]  plot[domain=4.15:4.26,variable=\t]({1*7.12*cos(\t r)+0*7.12*sin(\t r)},{0*7.12*cos(\t r)+1*7.12*sin(\t r)});
\draw [shift={(-1.09,5.95)},line width=2pt]  plot[domain=5.16:5.28,variable=\t]({1*7.12*cos(\t r)+0*7.12*sin(\t r)},{0*7.12*cos(\t r)+1*7.12*sin(\t r)});
\draw [line width=2pt] (2,-0.46)-- (2,-1.14);
\draw [shift={(1.94,-2.16)},line width=2pt]  plot[domain=1.51:4.01,variable=\t]({1*1.02*cos(\t r)+0*1.02*sin(\t r)},{0*1.02*cos(\t r)+1*1.02*sin(\t r)});
\draw [shift={(2,-1.59)},line width=2pt]  plot[domain=4.22:5.2,variable=\t]({1*1.53*cos(\t r)+0*1.53*sin(\t r)},{0*1.53*cos(\t r)+1*1.53*sin(\t r)});
\draw [shift={(2.06,-2.16)},line width=2pt]  plot[domain=-0.87:1.63,variable=\t]({1*1.02*cos(\t r)+0*1.02*sin(\t r)},{0*1.02*cos(\t r)+1*1.02*sin(\t r)});
\draw [shift={(6.42,-5.02)},line width=2pt]  plot[domain=2.76:2.99,variable=\t]({1*5.54*cos(\t r)+0*5.54*sin(\t r)},{0*5.54*cos(\t r)+1*5.54*sin(\t r)});
\draw [shift={(-2.42,-5.02)},line width=2pt]  plot[domain=0.15:0.38,variable=\t]({1*5.54*cos(\t r)+0*5.54*sin(\t r)},{0*5.54*cos(\t r)+1*5.54*sin(\t r)});
\draw [shift={(2,-3.31)},line width=2pt]  plot[domain=3.83:5.59,variable=\t]({1*1.37*cos(\t r)+0*1.37*sin(\t r)},{0*1.37*cos(\t r)+1*1.37*sin(\t r)});
\draw [shift={(1.4,-2.14)},line width=2pt]  plot[domain=1.63:4.49,variable=\t]({1*2.09*cos(\t r)+0*2.09*sin(\t r)},{0*2.09*cos(\t r)+1*2.09*sin(\t r)});
\draw [shift={(2.6,-2.14)},line width=2pt]  plot[domain=-1.35:1.51,variable=\t]({1*2.09*cos(\t r)+0*2.09*sin(\t r)},{0*2.09*cos(\t r)+1*2.09*sin(\t r)});
\draw [shift={(7.41,-1.2)},line width=2pt]  plot[domain=1.78:4.51,variable=\t]({1*1.31*cos(\t r)+0*1.31*sin(\t r)},{0*1.31*cos(\t r)+1*1.31*sin(\t r)});
\draw [shift={(8.48,-1.23)},line width=2pt]  plot[domain=0.78:2.37,variable=\t]({1*1.88*cos(\t r)+0*1.88*sin(\t r)},{0*1.88*cos(\t r)+1*1.88*sin(\t r)});
\draw [shift={(8.94,-1.78)},line width=2pt]  plot[domain=3.51:5.39,variable=\t]({1*1.93*cos(\t r)+0*1.93*sin(\t r)},{0*1.93*cos(\t r)+1*1.93*sin(\t r)});
\draw [shift={(10.52,-2.93)},line width=2pt]  plot[domain=-2.36:1.08,variable=\t]({1*0.51*cos(\t r)+0*0.51*sin(\t r)},{0*0.51*cos(\t r)+1*0.51*sin(\t r)});
\draw [shift={(10.07,-1.28)},line width=2pt]  plot[domain=-1.05:1.75,variable=\t]({1*1.38*cos(\t r)+0*1.38*sin(\t r)},{0*1.38*cos(\t r)+1*1.38*sin(\t r)});
\draw [line width=2pt] (7.14,0.08)-- (7.72,-1.2);
\draw [line width=2pt] (7.72,-1.2)-- (7.14,-2.48);
\draw [line width=2pt] (7.72,-1.2)-- (9.24,-1.2);
\draw [line width=2pt] (9.24,-1.2)-- (9.82,0.08);
\draw [line width=2pt] (9.24,-1.2)-- (10.08,-2.56);
\draw [line width=2pt] (10.08,-2.56)-- (10.76,-2.48);
\draw [line width=2pt] (10.08,-2.56)-- (10.16,-3.28);
\draw [shift={(12.76,-1.2)},line width=2pt]  plot[domain=0.87:5.42,variable=\t]({1*1*cos(\t r)+0*1*sin(\t r)},{0*1*cos(\t r)+1*1*sin(\t r)});
\draw [shift={(16.1,-0.49)},line width=2pt]  plot[domain=3.64:4.7,variable=\t]({1*3.07*cos(\t r)+0*3.07*sin(\t r)},{0*3.07*cos(\t r)+1*3.07*sin(\t r)});
\draw [shift={(16.69,-2.86)},line width=2pt]  plot[domain=-2.3:0.68,variable=\t]({1*0.94*cos(\t r)+0*0.94*sin(\t r)},{0*0.94*cos(\t r)+1*0.94*sin(\t r)});
\draw [shift={(14.18,-1.3)},line width=2pt]  plot[domain=1.15:2.31,variable=\t]({1*1.16*cos(\t r)+0*1.16*sin(\t r)},{0*1.16*cos(\t r)+1*1.16*sin(\t r)});
\draw [shift={(15.65,-1.78)},line width=2pt]  plot[domain=-0.26:2.14,variable=\t]({1*1.83*cos(\t r)+0*1.83*sin(\t r)},{0*1.83*cos(\t r)+1*1.83*sin(\t r)});
\draw [line width=2pt] (13.4,-0.44)-- (14,-1.2);
\draw [line width=2pt] (14,-1.2)-- (13.4,-1.96);
\draw [line width=2pt] (14,-1.2)-- (14.62,-1.2);
\draw [line width=2pt] (14.62,-1.2)-- (14.66,-0.24);
\draw [line width=2pt] (14.62,-1.2)-- (16.48,-2.36);
\draw [line width=2pt] (16.48,-2.36)-- (17.42,-2.26);
\draw [line width=2pt] (16.48,-2.36)-- (16.06,-3.56);
\draw (1.56,0.36) node[anchor=north west] {${{C_1}}$};
\draw (8,3.24) node[anchor=north west] {${{C_1}}$};
\draw (10.12,-2.62) node[anchor=north west] {${{C_1}}$};
\draw (1.54,6.04) node[anchor=north west] {${{C_1}}$};
\draw (13.66,-0.32) node[anchor=north west] {${{C_1}}$};
\draw (15.12,2.12) node[anchor=north west] {${{C_1}}$};
\draw (15.04,6.58) node[anchor=north west] {${{E'_1}}$};
\draw (7.98,0.4) node[anchor=north west] {${{E'_1}}$};
\draw (16.36,-2.44) node[anchor=north west] {${{E'_1}}$};
\draw (1.5,-3.14) node[anchor=north west] {${{E'_1}}$};
\draw (1.44,2.52) node[anchor=north west] {${{E'_1}}$};
\draw (8.06,6.18) node[anchor=north west] {${{E'_1}}$};
\draw (14.48,-1.82) node[anchor=north west] {${{E_2}}$};
\draw (-0.38,5.06) node[anchor=north west] {${{E_2}}$};
\draw (6.38,-0.56) node[anchor=north west] {${{E_2}}$};
\draw (-0.24,-1.4) node[anchor=north west] {${{E_2}}$};
\draw (6.24,4.36) node[anchor=north west] {${{E_2}}$};
\draw (13.36,4.44) node[anchor=north west] {${{E_2}}$};
\draw (12.38,-0.56) node[anchor=north west] {${{E_3}}$};
\draw (1.52,4.56) node[anchor=north west] {${{E_3}}$};
\draw (8.2,-1.78) node[anchor=north west] {${{E_3}}$};
\draw (8.06,4.66) node[anchor=north west] {${{E_3}}$};
\draw (1.62,-1.48) node[anchor=north west] {${{E_3}}$};
\draw (15.08,3.84) node[anchor=north west] {${{E_3}}$};
\draw (9.68,4.38) node[anchor=north west] {${{E_4}}$};
\draw (3.56,5) node[anchor=north west] {${{E_4}}$};
\draw (16.68,4.42) node[anchor=north west] {${{E_4}}$};
\draw (15.66,-0.56) node[anchor=north west] {${{E_4}}$};
\draw (10.06,-0.6) node[anchor=north west] {${{E_4}}$};
\draw (3.46,-1.52) node[anchor=north west] {${{E_4}}$};
\draw (-1.44,6.44) node[anchor=north west] {${{(A)}}$};
\draw (5.86,6.44) node[anchor=north west] {${{(B)}}$};
\draw (13.02,6.54) node[anchor=north west] {${{(C)}}$};
\draw (-1.46,0.78) node[anchor=north west] {${{(C')}}$};
\draw (5.94,0.84) node[anchor=north west] {${{(D)}}$};
\draw (12.98,0.98) node[anchor=north west] {${{(D')}}$};
\begin{scriptsize}
\fill [color=qqqqff] (1.4,5.12) circle (2.5pt);
\fill [color=qqqqff] (1.16,5.8) circle (2.5pt);
\fill [color=qqqqff] (0.62,3.62) circle (2.5pt);
\fill [color=qqqqff] (-0.48,2.92) circle (2.5pt);
\fill [color=qqqqff] (2.6,5.12) circle (2.5pt);
\fill [color=qqqqff] (2.84,5.8) circle (2.5pt);
\fill [color=qqqqff] (3.38,3.62) circle (2.5pt);
\fill [color=qqqqff] (4.48,2.92) circle (2.5pt);
\fill [color=qqqqff] (4.48,2.92) circle (2.5pt);
\fill [color=qqqqff] (2.84,5.8) circle (2.5pt);
\fill [color=qqqqff] (7.84,4.74) circle (2.5pt);
\fill [color=qqqqff] (7.12,5.7) circle (2.5pt);
\fill [color=qqqqff] (7.7,3.3) circle (2.5pt);
\fill [color=qqqqff] (8.46,2.46) circle (2.5pt);
\fill [color=qqqqff] (8.46,1.98) circle (2.5pt);
\fill [color=qqqqff] (9.08,4.74) circle (2.5pt);
\fill [color=qqqqff] (9.8,5.7) circle (2.5pt);
\fill [color=qqqqff] (9.22,3.3) circle (2.5pt);
\fill [color=qqqqff] (9.22,3.3) circle (2.5pt);
\fill [color=qqqqff] (8.46,2.46) circle (2.5pt);
\fill [color=qqqqff] (9.08,4.74) circle (2.5pt);
\fill [color=qqqqff] (9.22,3.3) circle (2.5pt);
\fill [color=qqqqff] (8.46,6.42) circle (2.5pt);
\fill [color=qqqqff] (9.8,5.7) circle (2.5pt);
\fill [color=qqqqff] (8.46,1.98) circle (2.5pt);
\fill [color=qqqqff] (14.38,5.78) circle (2.5pt);
\fill [color=qqqqff] (15.5,5.12) circle (2.5pt);
\fill [color=qqqqff] (15.5,4.36) circle (2.5pt);
\fill [color=qqqqff] (14.86,2.26) circle (2.5pt);
\fill [color=qqqqff] (14.62,1.56) circle (2.5pt);
\fill [color=qqqqff] (16.62,5.78) circle (2.5pt);
\fill [color=qqqqff] (16.14,2.26) circle (2.5pt);
\fill [color=qqqqff] (16.38,1.56) circle (2.5pt);
\fill [color=qqqqff] (16.14,2.26) circle (2.5pt);
\fill [color=qqqqff] (16.38,1.56) circle (2.5pt);
\fill [color=qqqqff] (15.5,4.36) circle (2.5pt);
\fill [color=qqqqff] (16.14,2.26) circle (2.5pt);
\fill [color=qqqqff] (15.5,6.54) circle (2.5pt);
\fill [color=qqqqff] (16.62,5.78) circle (2.5pt);
\fill [color=qqqqff] (15.5,5.12) circle (2.5pt);
\fill [color=qqqqff] (16.62,5.78) circle (2.5pt);
\fill [color=qqqqff] (16.38,1.56) circle (2.5pt);
\fill [color=qqqqff] (2,-0.46) circle (2.5pt);
\fill [color=qqqqff] (2,-1.14) circle (2.5pt);
\fill [color=qqqqff] (1.28,-0.06) circle (2.5pt);
\fill [color=qqqqff] (1.28,-2.94) circle (2.5pt);
\fill [color=qqqqff] (0.94,-4.18) circle (2.5pt);
\fill [color=qqqqff] (2.72,-0.06) circle (2.5pt);
\fill [color=qqqqff] (2.72,-2.94) circle (2.5pt);
\fill [color=qqqqff] (3.06,-4.18) circle (2.5pt);
\fill [color=qqqqff] (2.72,-0.06) circle (2.5pt);
\fill [color=qqqqff] (2,-0.46) circle (2.5pt);
\fill [color=qqqqff] (2,-1.14) circle (2.5pt);
\fill [color=qqqqff] (2.72,-2.94) circle (2.5pt);
\fill [color=qqqqff] (2.72,-2.94) circle (2.5pt);
\fill [color=qqqqff] (3.06,-4.18) circle (2.5pt);
\fill [color=qqqqff] (2.72,-0.06) circle (2.5pt);
\fill [color=qqqqff] (3.06,-4.18) circle (2.5pt);
\fill [color=qqqqff] (7.72,-1.2) circle (2.5pt);
\fill [color=qqqqff] (7.14,-2.48) circle (2.5pt);
\fill [color=qqqqff] (9.24,-1.2) circle (2.5pt);
\fill [color=qqqqff] (7.14,0.08) circle (2.5pt);
\fill [color=qqqqff] (9.82,0.08) circle (2.5pt);
\fill [color=qqqqff] (10.08,-2.56) circle (2.5pt);
\fill [color=qqqqff] (10.76,-2.48) circle (2.5pt);
\fill [color=qqqqff] (10.16,-3.28) circle (2.5pt);
\fill [color=qqqqff] (14,-1.2) circle (2.5pt);
\fill [color=qqqqff] (13.4,-1.96) circle (2.5pt);
\fill [color=qqqqff] (13.4,-0.44) circle (2.5pt);
\fill [color=qqqqff] (14.62,-1.2) circle (2.5pt);
\fill [color=qqqqff] (14.66,-0.24) circle (2.5pt);
\fill [color=qqqqff] (16.48,-2.36) circle (2.5pt);
\fill [color=qqqqff] (17.42,-2.26) circle (2.5pt);
\fill [color=qqqqff] (16.06,-3.56) circle (2.5pt);
\end{scriptsize}
\end{tikzpicture}
\\
\begin{tikzpicture}[line cap=round,line join=round,>=triangle 45,x=0.5cm,y=0.5cm]
%\clip(-2.15,-10.47) rectangle (22.19,10.12);
\clip(-1.20,-1.7) rectangle (21.5,6.7);
%\draw(-1.20,-1.8) rectangle (21.5,6.8);
\draw [shift={(-0.38,3.78)},line width=2pt]  plot[domain=1.51:4.78,variable=\t]({1*0.6*cos(\t r)+0*0.6*sin(\t r)},{0*0.6*cos(\t r)+1*0.6*sin(\t r)});
\draw [shift={(2.86,4.4)},line width=2pt]  plot[domain=3.51:5.2,variable=\t]({1*3.43*cos(\t r)+0*3.43*sin(\t r)},{0*3.43*cos(\t r)+1*3.43*sin(\t r)});
\draw [shift={(5.28,2.3)},line width=2pt]  plot[domain=-2.28:0.96,variable=\t]({1*1.22*cos(\t r)+0*1.22*sin(\t r)},{0*1.22*cos(\t r)+1*1.22*sin(\t r)});
\draw [shift={(4.2,3.68)},line width=2pt]  plot[domain=-0.21:2.09,variable=\t]({1*1.82*cos(\t r)+0*1.82*sin(\t r)},{0*1.82*cos(\t r)+1*1.82*sin(\t r)});
\draw [shift={(1.6,4.34)},line width=2pt]  plot[domain=0.49:3.12,variable=\t]({1*1.94*cos(\t r)+0*1.94*sin(\t r)},{0*1.94*cos(\t r)+1*1.94*sin(\t r)});
\draw [line width=2pt] (-0.34,4.38)-- (0.1,3.78);
\draw [line width=2pt] (0.1,3.78)-- (-0.34,3.18);
\draw [line width=2pt] (0.1,3.78)-- (3.04,3.78);
\draw [line width=2pt] (3.04,3.78)-- (3.3,5.26);
\draw [line width=2pt] (3.04,3.78)-- (4.62,2.64);
\draw [line width=2pt] (4.62,2.64)-- (4.48,1.38);
\draw [line width=2pt] (4.62,2.64)-- (5.98,3.3);
\draw [shift={(10.44,5.12)},line width=2pt]  plot[domain=0.16:2.99,variable=\t]({1*1.4*cos(\t r)+0*1.4*sin(\t r)},{0*1.4*cos(\t r)+1*1.4*sin(\t r)});
\draw [shift={(10.12,3.01)},line width=2pt]  plot[domain=2:4.23,variable=\t]({1*2.56*cos(\t r)+0*2.56*sin(\t r)},{0*2.56*cos(\t r)+1*2.56*sin(\t r)});
\draw [shift={(10.44,1.33)},line width=2pt]  plot[domain=3.52:5.91,variable=\t]({1*1.61*cos(\t r)+0*1.61*sin(\t r)},{0*1.61*cos(\t r)+1*1.61*sin(\t r)});
\draw [shift={(10.76,3.01)},line width=2pt]  plot[domain=-1.09:1.14,variable=\t]({1*2.56*cos(\t r)+0*2.56*sin(\t r)},{0*2.56*cos(\t r)+1*2.56*sin(\t r)});
\draw [line width=2pt] (9.06,5.34)-- (10.44,4.6);
\draw [line width=2pt] (10.44,4.6)-- (11.82,5.34);
\draw [line width=2pt] (10.44,4.6)-- (10.44,3.34);
\draw [line width=2pt] (9.86,2.26)-- (8.94,0.74);
\draw [line width=2pt] (11.02,2.26)-- (11.94,0.74);
\draw [line width=2pt] (9.86,2.26)-- (11.02,2.26);
\draw [shift={(10.76,2.47)},line width=2pt]  plot[domain=1.92:3.37,variable=\t]({1*0.92*cos(\t r)+0*0.92*sin(\t r)},{0*0.92*cos(\t r)+1*0.92*sin(\t r)});
\draw [shift={(10.12,2.47)},line width=2pt]  plot[domain=-0.23:1.22,variable=\t]({1*0.92*cos(\t r)+0*0.92*sin(\t r)},{0*0.92*cos(\t r)+1*0.92*sin(\t r)});
\draw [shift={(18.12,5.66)},line width=2pt]  plot[domain=0.25:2.89,variable=\t]({1*0.64*cos(\t r)+0*0.64*sin(\t r)},{0*0.64*cos(\t r)+1*0.64*sin(\t r)});
\draw [shift={(17.99,2.69)},line width=2pt]  plot[domain=1.73:3.94,variable=\t]({1*3.17*cos(\t r)+0*3.17*sin(\t r)},{0*3.17*cos(\t r)+1*3.17*sin(\t r)});
\draw [shift={(18.12,0.94)},line width=2pt]  plot[domain=3.36:6.07,variable=\t]({1*2.4*cos(\t r)+0*2.4*sin(\t r)},{0*2.4*cos(\t r)+1*2.4*sin(\t r)});
\draw [shift={(18.25,2.69)},line width=2pt]  plot[domain=-0.8:1.41,variable=\t]({1*3.17*cos(\t r)+0*3.17*sin(\t r)},{0*3.17*cos(\t r)+1*3.17*sin(\t r)});
\draw [shift={(18.43,2.34)},line width=2pt]  plot[domain=1.78:3.38,variable=\t]({1*1.47*cos(\t r)+0*1.47*sin(\t r)},{0*1.47*cos(\t r)+1*1.47*sin(\t r)});
\draw [shift={(17.81,2.34)},line width=2pt]  plot[domain=-0.24:1.36,variable=\t]({1*1.47*cos(\t r)+0*1.47*sin(\t r)},{0*1.47*cos(\t r)+1*1.47*sin(\t r)});
\draw [shift={(18.12,3.8)},line width=2pt]  plot[domain=4.16:5.27,variable=\t]({1*2.12*cos(\t r)+0*2.12*sin(\t r)},{0*2.12*cos(\t r)+1*2.12*sin(\t r)});
\draw [line width=2pt] (17,2)-- (15.78,0.42);
\draw [line width=2pt] (19.24,2)-- (20.46,0.42);
\draw [line width=2pt] (17.5,5.82)-- (18.12,5.38);
\draw [line width=2pt] (18.12,5.38)-- (18.74,5.82);
\draw [line width=2pt] (18.12,5.38)-- (18.12,3.78);
\draw (17.47,6.36) node[anchor=north west] {${{C_1}}$};
\draw (-1.14,4.37) node[anchor=north west] {${{C_1}}$};
\draw (9.85,3.22) node[anchor=north west] {${{C_1}}$};
\draw (17.62,3.46) node[anchor=north west] {${{E'_1}}$};
\draw (4.9,2.74) node[anchor=north west] {${{E'_1}}$};
\draw (9.93,6.52) node[anchor=north west] {${{E'_1}}$};
\draw (15.75,3.89) node[anchor=north west] {${{E_2}}$};
\draw (7.89,3.92) node[anchor=north west] {${{E_2}}$};
\draw (1.71,3.06) node[anchor=north west] {${{E_2}}$};
\draw (19.86,3.86) node[anchor=north west] {${{E_3}}$};
\draw (11.66,3.98) node[anchor=north west] {${{E_3}}$};
\draw (4.12,4.76) node[anchor=north west] {${{E_3}}$};
\draw (17.65,1.64) node[anchor=north west] {${{E_4}}$};
\draw (9.99,1.76) node[anchor=north west] {${{E_4}}$};
\draw (1.01,5.86) node[anchor=north west] {${{E_4}}$};
\draw (-0.97,6.58) node[anchor=north west] {${{(E)}}$};
\draw (7.55,6.58) node[anchor=north west] {${{(F)}}$};
\draw (15.37,6.78) node[anchor=north west] {${{(F')}}$};
\begin{scriptsize}
\fill [color=qqqqff] (0.1,3.78) circle (2.5pt);
\fill [color=qqqqff] (-0.34,3.18) circle (2.5pt);
\fill [color=qqqqff] (3.04,3.78) circle (2.5pt);
\fill [color=qqqqff] (4.62,2.64) circle (2.5pt);
\fill [color=qqqqff] (-0.34,4.38) circle (2.5pt);
\fill [color=qqqqff] (5.98,3.3) circle (2.5pt);
\fill [color=qqqqff] (4.48,1.38) circle (2.5pt);
\fill [color=qqqqff] (3.3,5.26) circle (2.5pt);
\fill [color=qqqqff] (10.44,4.6) circle (2.5pt);
\fill [color=qqqqff] (9.06,5.34) circle (2.5pt);
\fill [color=qqqqff] (11.82,5.34) circle (2.5pt);
\fill [color=qqqqff] (10.44,3.34) circle (2.5pt);
\fill [color=qqqqff] (9.86,2.26) circle (2.5pt);
\fill [color=qqqqff] (11.02,2.26) circle (2.5pt);
\fill [color=qqqqff] (8.94,0.74) circle (2.5pt);
\fill [color=qqqqff] (11.94,0.74) circle (2.5pt);
\fill [color=qqqqff] (11.82,5.34) circle (2.5pt);
\fill [color=qqqqff] (11.94,0.74) circle (2.5pt);
\fill [color=qqqqff] (10.44,3.34) circle (2.5pt);
\fill [color=qqqqff] (11.02,2.26) circle (2.5pt);
\fill [color=qqqqff] (18.12,5.38) circle (2.5pt);
\fill [color=qqqqff] (17.5,5.82) circle (2.5pt);
\fill [color=qqqqff] (18.74,5.82) circle (2.5pt);
\fill [color=qqqqff] (18.12,3.78) circle (2.5pt);
\fill [color=qqqqff] (17,2) circle (2.5pt);
\fill [color=qqqqff] (19.24,2) circle (2.5pt);
\fill [color=qqqqff] (15.78,0.42) circle (2.5pt);
\fill [color=qqqqff] (20.46,0.42) circle (2.5pt);
\fill [color=qqqqff] (18.74,5.82) circle (2.5pt);
\fill [color=qqqqff] (20.46,0.42) circle (2.5pt);
\fill [color=qqqqff] (18.12,3.78) circle (2.5pt);
\fill [color=qqqqff] (19.24,2) circle (2.5pt);
\end{scriptsize}
\end{tikzpicture}
\caption{Classification of clusters with five components, Proposition~\ref{classify_five_components}.}
\label{classificazione}
\end{figure}
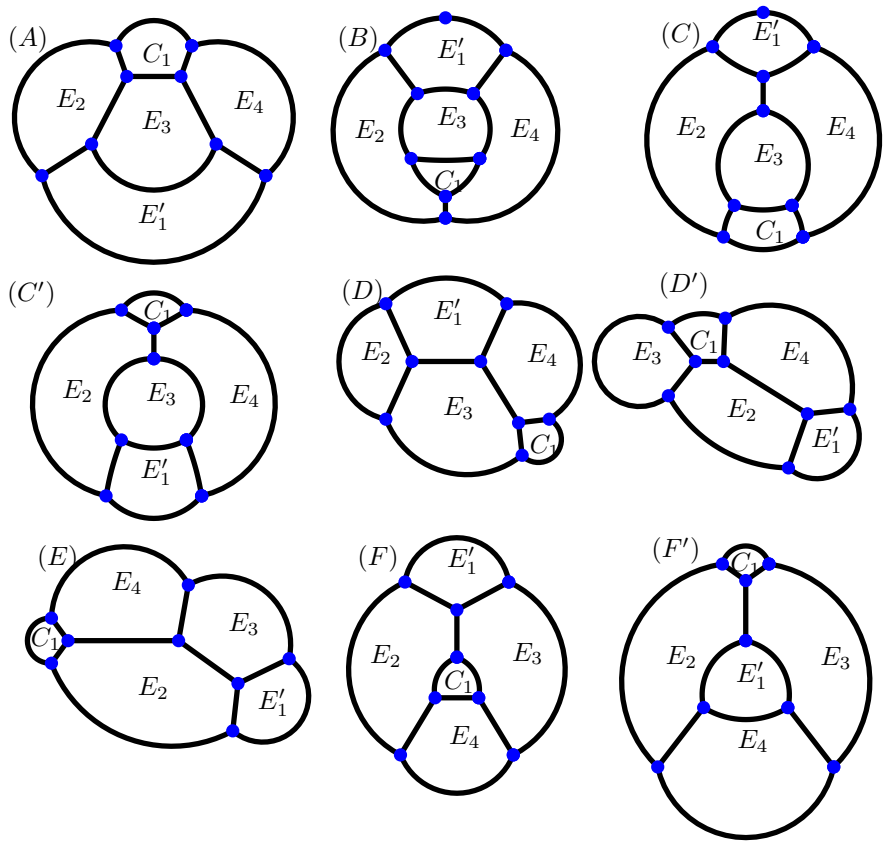

\begin{proposition}\label{classify_five_components}
Let $\E\in \M^*(1,1,1,1)$ be a cluster with 5 connected components. Then, up to
a relabeling of the components, the
topology of $\E$ is one of the cases represented in
Figure~\ref{classificazione}.
\end{proposition}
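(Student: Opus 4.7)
The proof must be a combinatorial classification, so the plan is to enumerate all planar trivalent graphs on $5$ bounded faces and one external face that satisfy the numerical/adjacency constraints available from Section~\ref{sec:1111}. The setup gives a single disconnected region $E_1 = E_1' \cup C_1$ (big plus small) while $E_2, E_3, E_4$ are connected. By Proposition~\ref{prop:edges} applied with $M=5$ we have exactly $12$ edges and $8$ vertices, every bounded component has at least $3$ edges, two components of the same region cannot share an edge (so in particular $C_1$ and $E_1'$ share no edge), and the edge-count bounds read: $E_1'$ and $C_1$ have at most $4$ edges if external and at most $3$ if internal, while $E_2, E_3, E_4$ have at most $5$ edges if external and at most $4$ if internal.

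Next I would sort by the internal/external status of the components. By Corollary~\ref{cor:one_big_internal} at most one of the big components $E_1',E_2,E_3,E_4$ is internal; the small component $C_1$ may be internal or external. This splits the argument into three cases according to the total number of internal components being $0$, $1$, or $2$. Since every vertex is trivalent, a vertex lies on $\partial E_0$ if and only if two of its three incident edges are external; hence if $n_0$ denotes the number of edges of $E_0$ then $E_0$ contains exactly $n_0$ vertices on its boundary cycle, leaving $8-n_0$ internal vertices. Writing $n_1',n_C,n_2,n_3,n_4$ for the edge counts of the bounded components we get the Euler-type identity
\[
n_0 + n_1' + n_C + n_2 + n_3 + n_4 = 24,
\]
which combined with the upper and lower bounds above allows only finitely many admissible $6$-tuples.

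For each admissible tuple I would then draw the possible planar embeddings compatible with the two extra restrictions: (i) no edge runs between $C_1$ and $E_1'$, and (ii) the cyclic sequence of components met along $\partial E_0$ must exhaust all external components with the correct multiplicities. Starting from the component of highest valence (either a pentagon of some $E_i$, when present, or the $4$-sided component $E_1'$ or $C_1$) and attaching the remaining faces one by one while respecting trivalence and the no-$C_1$-$E_1'$-edge rule, one finds that exactly the nine diagrams (A), (B), (C), (C'), (D), (D'), (E), (F), (F') of Figure~\ref{classificazione} survive.

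The main obstacle is the sheer length and book-keeping of the case split: one must verify systematically that no other planar trivalent $(8,12,6)$-map with the prescribed labeling is possible, and in particular distinguish configurations that differ only by which of $E_2,E_3,E_4$ plays a given role (this is where the primed variants $(C'),(D'),(F')$ come from, since the small component $C_1$ may sit in topologically inequivalent positions relative to the neighbouring connected regions). I would handle this by fixing, within each case, the position of $C_1$ first (it has at most $4$ vertices and these determine its neighbours) and then completing the diagram; the relabeling of $E_2,E_3,E_4$ then collapses the list to the stated nine topologies.
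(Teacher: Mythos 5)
Your plan is essentially the same as the paper's: both reduce the classification to a combinatorial enumeration of planar trivalent maps subject to the constraints from Proposition~\ref{prop:edges} (with $M=5$: twelve edges, eight vertices, the degree-sum identity $n_0+n_1'+n_C+n_2+n_3+n_4=24$, the bound of at most $4$ edges for an external component of $E_1$ and $3$ for an internal one, no edge between $C_1$ and $E_1'$, and the restriction from Corollary~\ref{cor:one_big_internal} that at most one big component is internal). The paper organizes the case split by the edge count and internal/external status of $E_1'$ and $C_1$ (nine steps), whereas you propose to split first by the number of internal components and then by admissible degree-tuples; these are organizationally different but substantively equivalent.

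The genuine gap is that the enumeration itself, which is the entire content of the proposition, is not carried out. After correctly listing the constraints you conclude with ``one finds that exactly the nine diagrams survive,'' which merely restates the proposition rather than proving it. The paper actually performs the case analysis, and several sub-cases require a real argument to exclude: for instance, when $E_1'$ is external with three edges and $C_1$ is internal with three edges, the paper must rule out a configuration in which two vertices of $C_1$ are joined to the same vertex (producing a third three-edged bounded component) by a short Euler-formula computation showing $\max\{n_3,n_4\}\ge 6$, which contradicts Proposition~\ref{prop:edges}; and the case where $E_1'$ is internal and $C_1$ has four edges is killed by Corollary~\ref{cor:one_big_internal}. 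A complete proof must make these exclusions explicit, and must also verify, for each admissible tuple and each choice of which vertices of $C_1$ and $E_1'$ connect to which, that the completion of the diagram is forced (the paper repeatedly uses the fact that no two-sided component is allowed, i.e.\ two components cannot share two edges, to pin down the missing edges). Without these steps the argument is a plan, not a proof.

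A smaller imprecision: you attribute the primed variants $(C'),(D'),(F')$ to ``which of $E_2,E_3,E_4$ plays a given role,'' but in fact they arise from swapping the roles of $E_1'$ and $C_1$ within the same underlying map: in $(C)$ versus $(C')$ it is which of the two components of $E_1$ has four edges, and in $(F)$ versus $(F')$ it is which of the two is internal. Since $E_1'$ is forced to be the big component and $C_1$ the small one, these are genuinely distinct labeled topologies, not related by a relabeling of $E_2,E_3,E_4$.
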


\begin{proof}
Suppose that $E_1$ is the only disconnected region and let $E_1'$ and
$C_1$ respectively be the big and small connected components of $E_1$.
By Proposition~\ref{prop:edges} we know that $\partial\E$ is composed by 12
edges and 8 vertices moreover both $E_1'$ and $C_1$ may have at most 4
edges if they are external and 3 edges if they are internal.

\emph{Step 1.} Suppose that both $E_1'$ and $C_1$ have four
edges (and hence they are external). All the 8 vertices of the cluster are
vertices of either $E_1'$ or $C_1'$ and both $E_1'$ and $C_1$ have an
external edge with two external vertices. The external region $E_0$
has four edges.

The remaining two internal vertices
of $E_1'$ must be connected with the two internal vertices of
$C_1$ (remember that we cannot have two edges with the same end
points, because two-sided components are not allowed).
Hence the cluster is of type (A) in
Figure~\ref{classificazione}.

\emph{Step 2.} Suppose that $E_1'$ has 4 edges (hence it is external)
and suppose $C_1$ is external with 3 edges. In this case we need to
add an additional vertex $v$.

If $v$ is external then the external region $E_0$ has five edges.
The vertex $v$ must be connected to an internal vertex of
$E_1'$ while the other internal vertex of $E_1'$ must be connected to
the internal vertex of $C_1$. The resulting topology is (D).

If, instead, the additional vertex $v$ is internal, it must be
connected to the two internal vertices of $E_1'$ and to the internal
vertex of $C_1$. Hence we are in case (C').

\emph{Step 3.} Suppose $E_1'$ has 4 edges (hence it is external) and
suppose $C_1$ is internal with 3 edges. Since the external region must
have at least three edges, there is an additional external vertex $v$
and $E_0$ has three edges.
One of the
three vertices of $C_1$ must be connected to the vertex $v$ while the
other two vertices of $C_1$ must be connected to the two internal
vertices of $E_1'$. The resulting topology is (B).

\emph{Step 4.} Suppose $E_1'$ has 3 edges and is external while $C_1$
has four edges (and hence is external). We repeat the same reasoning
of Step 2 with $E_1'$ and $C_1$ exchanged and we obtain cases (D') and
(C).

\emph{Step 5.} Suppose $E_1'$ has 3 edges and is internal while $C_1$
has four edges (and hence is external). We repeat the same reasoning
of Step 3 and obtain case (B) with $E_1'$ and $C_1$ exchanged. But in
this case we would have two big internal components: $E_1'$ and $E_3$
and this is impossible in view of Corollary~\ref{cor:one_big_internal}.

\emph{Step 6.} Suppose that both $E_1'$ and $C_1$ have three edges and
are external. There are two additional vertices $v,w$
which are not vertices of $E_1'$ or $C_1$.
Since the external region
$E_0$ has at most 5 edges (there are only 5 bounded components) one of
the two vertices, say $v$, is internal. The other vertex $w$ cannot be
internal, because otherwise $v$ and $w$ need to be joined by two
different edges, which is not possible. The internal vertex $v$ must
be connected to $w$ and to the two internal vertices of $E_1'$ and
$C_1$. The resulting topology is (E).

\emph{Step 7.} Suppose that both $E_1'$ and $C_1$ have three edges and
suppose $E_1'$ is external and $C_1$ is internal. We need to place two
additional vertices $v$ and $w$. Certainly one among $v$ and $w$ is external,
since $E_0$ has at least three edges. In case
both $v$ and $w$ are external $E_0$ has four edges.

If two of the three vertices of $C_1$ are connected to the same
vertex, we would obtain an additional three sided component (say it is
$E_2$).
Hence we notice we have three components with three edges: $E_1'$,
$C_1$ and $E_2$. %%$E_0$,
 Let $n_0$, $n_3$ and
$n_4$ be the number of edges of $E_0$, $E_3$ and $E_4$. By Euler's formula we
have
$24=3 \times 3 +n_0+ n_3 + n_4 \leq 9+4+n_3+n_4$,
which means that $\max\{n_3,n_4\}\ge \frac{11}{2}$, i.e $\max\{n_3,n_4\}\ge 6$
(notice that $n_3$ and $n_4$ are integers), which is impossible
by Proposition~\ref{prop:edges} (each component can only have one edge
in common with each other component).

So the three vertices of $C_1$ are connected to $v$, $w$ and to the
internal vertex of $E_1'$. Necessarily $v$ and $w$ are also connected
to the external vertices of $E_1'$ hence they are both external and
$E_0$ has 4 edges. The resulting cluster is of type (F).

\emph{Step 8.} Suppose that both $E_1'$ and $C_1$ have three edges and
suppose that $E_1'$ is internal and $C_1$ is external. We obtain the
same classification of Step 7 but with $E_1'$ and $C_1$
exchanged. We obtain case (F').

\emph{Step 9.} Suppose that both $E_1'$ and $C_1'$ have three edges
and are both internal. This is impossible because the external region
would only have two edges, which is excluded.
\end{proof}

\begin{proposition}\label{BCCDDEF}
  Let $\E\in\M^*(1,1,1,1)$. Then $\E$ cannot have the topologies $(B)$,
  $(C)$, $(C')$, $(D)$, $(D')$, $(E)$, $(F)$ of Figure~\ref{classificazione}.
\end{proposition}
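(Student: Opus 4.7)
The plan is to exclude each of the seven listed topologies separately, producing lower bounds on $p_1,\ldots,p_4$ that together contradict the identity
\[
p_1+p_2+p_3+p_4 = \tfrac{1}{2}P(\E) \le \tfrac{1}{2}k_0
\]
coming from Theorem~\ref{thm:pressure_formula} and Proposition~\ref{prop:k_0}. In each topology of Figure~\ref{classificazione} the number of edges of every component is determined, so the turning-angle inequality of Proposition~\ref{prop:pressure_perimeter} applies directly to each of them.

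The first step, common to all cases, is to extract a strong lower bound on $p_1$ from the small component $C_1$. Since Proposition~\ref{prop:k_1} gives $P(C_1)\le k_7$, Proposition~\ref{prop:pressure_perimeter} yields
\[
p_1 \;\ge\; \frac{(6-n)\pi}{3\,P(C_1)} \;\ge\; \frac{(6-n)\pi}{3\,k_7},
\]
where $n\in\{3,4\}$ is the number of edges of $C_1$ read off from Figure~\ref{classificazione}. This already gives $p_1\ge \pi/k_7$ in (B), (C$'$), (D), (E), (F), and $p_1\ge 2\pi/(3k_7)$ in (C) and (D$'$). Whenever the topology contains a second three-edged component---the internal connecting component of (E) and (F), or a three-edged $E_i$ among the connected regions in (C), (D$'$)---the same turning-angle inequality provides an analogous lower bound for its pressure, using $P\le k_7$ for a big component (Lemma~\ref{lem:isop} together with $m\ge 1-k_1$) or $P\le k_9$ for another small component (Proposition~\ref{prop:k_3}) to estimate the perimeter from above.

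The second step is to bound the pressures of the remaining regions by one of two complementary methods. In topologies (B), (C$'$), (D$'$), (F) the three-edged components can be removed in succession via Theorem~\ref{thm:triangle_remove}, reducing the configuration to a connected double bubble between two of the original regions $E_i, E_j$; Lemma~\ref{lemma:double_bubble_pressure} then forces each of those pressures to be at least $k_8/\sqrt{3}$, since the residual areas sum to at most $3$. In the remaining topologies---chiefly (C), (D), (E)---I use instead Proposition~\ref{prop:pressure_length}: since $m(E_1')\ge 1-k_1$ by Corollary~\ref{cor:one_big_component}, the big external component $E_1'$ has a long external arc, which turns the variation-III inequality into a non-trivial lower bound on the pressure of any region adjacent to both $E_1'$ and $C_1$. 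In each topology, summing the bounds obtained will produce $p_1+p_2+p_3+p_4 > k_0/2$, the desired contradiction.

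The main obstacle is expected to be topology (E), where both $E_1'$ and $C_1$ have only three edges and are external, so no triangle removal reduces the cluster to a double bubble on the original regions and the adjacency structure is the most symmetric. In that case the argument must simultaneously exploit the strong lower bound on $p_1$ from the three-edged $C_1$, the turning-angle bound on the pressure of the internal three-edged component, and the variation-III estimate on the external arc of $E_1'$, along the lines of Step 7 of Proposition~\ref{prop:not_two_two}. A secondary, recurring difficulty is the bookkeeping of which indexed regions $E_2, E_3, E_4$ play which role in Figure~\ref{classificazione}; this is handled by symmetry considerations together with Proposition~\ref{prop:edges}(2)--(4), which pins down the number of edges of each big component.
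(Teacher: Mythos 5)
Your proposal goes in a genuinely different direction from the paper, and as written it has a gap that I do not think can be closed by the bookkeeping you describe. The paper's proof does \emph{not} contradict the global identity $p_1+\dots+p_4\le k_0/2$; it contradicts an \emph{upper} bound on the single pressure $p_1$. The key observation you are missing is that in every one of the seven topologies the cluster reduces, by removal of triangular components, to a double bubble $(E_1'',E_2'')$ with $E_1''\supset E_1'$ and $E_2''\supset E_2$, so Lemma~\ref{lemma:double_bubble_pressure} gives
\[
p_1 \le \frac{k_8}{\sqrt{\min\{m(E_1'),m(E_2)\}}}=\frac{k_8}{\sqrt{1-m(C_1)}}\le\frac{k_8}{\sqrt{1-k_1}}\le 1.7352 .
\]
Against this, the turning-angle bound $p_1\ge \pi/k_7\ge 2.2125$ (when $C_1$ has three edges) instantly kills $(B)$, $(C')$, $(D)$, $(E)$, $(F)$ --- including case $(E)$, which you single out as the hardest and for which you sketch a much more elaborate argument. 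For $(C)$ and $(D')$, where $C_1$ has four edges, the paper refines the lower bound to $p_1\ge \tfrac{2\pi}{3k_7}+\tfrac{k_8}{\sqrt3}\bigl(1-\tfrac{\sqrt{k_1}\,k_0}{2\sqrt\pi(2-k_1)}\bigr)\ge 1.7615$, again beating $1.7352$ by a margin of under two percent.

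That tiny margin is precisely why your strategy of summing four lower bounds to exceed $k_0/2\approx 5.598$ is unlikely to work: with $p_1\approx 1.76$ you would still need $p_2+p_3+p_4>3.83$, while the double-bubble reduction only supplies $k_8/\sqrt3\approx 0.918$ per region, totalling about $2.75$. The extra tools you invoke (Proposition~\ref{prop:pressure_length}, perimeter bounds on big components) are not shown to make up this deficit, and you give no numerical check that any case actually closes. You would need either the upper bound on $p_1$ above, or the $\ell_i\le k_0-k_{10}$ device used in Propositions~\ref{excludeA} and~\ref{prop:no_flower} (removing one region and comparing with $p^*(1,1,1)$), neither of which appears in your outline. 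I would also flag that your two correct uses of Lemma~\ref{lemma:double_bubble_pressure} only extract its lower-bound half; the upper-bound half applied to the pair $(E_1',E_2)$ is the linchpin of the whole proposition.
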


\begin{proof}
  Notice that in each case it is possible (by subsequently removing
  triangular components) to reduce the cluster $\E$
  to a double bubble $(E_1'', E_2'')$ where $E_1''\supset E_1'$
  and $E_2''\supset E_2$.

  So, by applying Lemma~\ref{lemma:double_bubble_pressure} we obtain
  at once
  \begin{equation}\label{eq:p1_upper}
  p_1 \le \frac{k_8}{\sqrt{\min\{m(E_1'),m(E_2)\} }}
  = \frac{k_8}{\sqrt{1-m(C_1)}}
  \le \frac{k_8}{\sqrt{1-k_1}} \le 1.7352.
  \end{equation}

  In the case when $C_1$ has only three edges (i.e.\ cases $(B)$, $(C')$,
  $(D)$, $(E)$, and $(F)$) we can apply
  Proposition~\ref{prop:pressure_perimeter} and then
  Proposition~\ref{prop:k_1} to obtain
  \[
  p_1 \ge \frac{(6-3)\pi}{3P(C_1)}
  \ge \frac{\pi}{k_7}
  \ge 2.2125
  \]
  and this is in contradiction with \eqref{eq:p1_upper}.

  In both cases $(C)$ and $(D')$ we can reduce the triangular components
  to find a double bubble $(E_2'', E_4'')$ with $E_2''\supset E_2$ and
  $E_4'' \supset E_4$. Moreover $E_2''\subset \RR^2\setminus (E_0\cup
  E_4)$ and $E_4'' \subset \RR^2\setminus(E_0\cup E_2)$ so that
  $m(E_2'')\le 3$ and $m(E_4'')\le 3$. So, by using
  Lemma~\ref{lemma:double_bubble_pressure} we obtain
  \[
  \min\{p_2,p_4\} \ge \frac{k_8}{\sqrt{\min\{4 - m(E_4),4 - m(E_2)\}}}
  = \frac{k_8}{\sqrt 3}.
  \]
  In case $(D')$ we can find another reduction to a double bubble
  $(E_2'', E_3'')$ and, as before, we find
  \[
  \min\{p_2,p_3\} \ge \frac{k_8}{\sqrt 3}
  \]
  so that, in this case, $\min\{p_2,p_3,p_4\}\ge k_8/\sqrt{3}$.

  In case $(C)$ we apply Proposition~\ref{prop:pressure_perimeter} to
  the component $C_1$ to obtain:
  \[
  p_1 \ge \frac{(6-4)\pi}{3P(C_1)} \ge \frac{2\pi}{3k_7}\ge 1.4750
  > 0.9179 \ge \frac{k_8}{\sqrt 3}
  \]
  and then we apply the same Proposition~\ref{prop:pressure_perimeter} to $E_3$ to obtain (notice
  that we consider $\ell=0$ since $E_3$ is internal):
  \[
  p_3 \ge \frac{(6-3)\pi}{3P(E_3)} + \min\{p_1,p_2,p_4\}
  \ge\min\{p_1,p_2,p_4\} \ge \frac{k_8}{\sqrt 3}.
  \]

  So, in both cases $C$ and $D'$, we obtain
  \[
  \min\{p_2,p_3,p_4\} \ge \frac{k_8}{\sqrt 3}.
  \]
  Now we need to estimate the length $\ell$ of the external edge of
  $C_1$. By Proposition~\ref{prop:varII} we have (notice that $m(C_1)<k_1<1$),
  \[
  \ell \le \frac{m(C_1)\cdot P(\E)}{\lvert 2 -m(C_1)\rvert}
  \le \frac{m(C_1)k_0}{2-k_1}
  \]
  while, by Proposition~\ref{prop:k_1}, we have
  \[
  P(C_1) \le k_7.
  \]
  By applying Proposition~\ref{prop:pressure_perimeter}, and
  using the previous estimates, we get
  \begin{align*}
  p_1 & \ge \frac{(6-4)\pi}{3P(C_1)} + \min\{p_2,p_3,p_4\}
  \left(1-\ell\cdot\frac{1}{P(C_1)}\right) \\
  &\ge \frac{2\pi}{3 k_7} + \frac{k_8}{\sqrt 3}\left(1-\frac{m(C_1)k_0}{2
    - k_1}\cdot\frac{1}{2\sqrt{\pi}\sqrt{m(C_1)}}\right) \\
  &\ge \frac{2\pi}{3 k_7} + \frac{k_8}{\sqrt 3}\left(1-\frac{\sqrt{k_1}
    k_0}{2\sqrt \pi(2-k_1)}\right) \ge 1.7615
  \end{align*}
  which, again, is in contradiction with~\eqref{eq:p1_upper}.
\end{proof}

\begin{proposition}\label{prop:F'}
  Let $\E\in \M^*(1,1,1,1)$ has 5 components. Then we exclude that
  $\E$ has the topology $(F')$ of Figure~\ref{classificazione}.
\end{proposition}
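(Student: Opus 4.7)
The plan is to derive a contradiction by showing that $p_1$ must simultaneously exceed one concrete value and fall below a smaller concrete value. The argument has three parts: uniform bounds on $p_2,p_3,p_4$ via double bubble reductions, a refined lower bound on $p_1$ from $C_1$, and a matching upper bound on $p_1$ from $E_1'$.

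First, in topology $(F')$ both $E_1'$ and $C_1$ are $3$-edged, so by Theorem~\ref{thm:triangle_remove} they may be removed in sequence; in the resulting $3$-cluster on $E_2,E_3,E_4$ every region is $3$-edged (each of them loses its edge with $E_1'$ when $E_1'$ is removed and, in the case of $E_2,E_3$, also its edge with $C_1$), so one further triangle removal produces a double bubble $(E_i^*,E_j^*)$ with $E_i^*\supset E_i$ and $E_j^*\supset E_j$ for any chosen pair $\{i,j\}\subset\{2,3,4\}$. Applying Lemma~\ref{lemma:double_bubble_pressure} with $\boldsymbol a=(1,1,1,1)$ and $a=4$ yields
\[
\frac{k_8}{\sqrt{3}}\le p_j\le k_8\qquad (j=2,3,4).
\]

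Next, apply Proposition~\ref{prop:pressure_perimeter} to $C_1$: its bounded neighbors are $E_2,E_3$, whose pressures are at least $k_8/\sqrt{3}$, so
\[
p_1 \ge \frac{\pi}{P(C_1)} + \frac{k_8}{\sqrt{3}}\left(1-\frac{\ell}{P(C_1)}\right)
    = \frac{k_8}{\sqrt{3}} + \frac{\pi-(k_8/\sqrt{3})\ell}{P(C_1)},
\]
where $\ell$ is the external arc length of $C_1$. The right-hand side is monotone decreasing in both $P(C_1)$ and $\ell$ throughout the admissible range, so inserting $P(C_1)\le k_7$ (Proposition~\ref{prop:k_1}) together with $\ell\le k_1 k_0/(2-k_1)$ from Proposition~\ref{prop:varII} (using $m(C_1)\le k_1$) produces a concrete lower bound $p_1\ge c_*$ with $c_*>2.49$.

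For the matching upper bound, the turning-angle formula applied to the internal $3$-edged $E_1'$ reads $p_1 = \pi/P(E_1') + \bar{p}$, where $\bar{p}$ is a convex combination of $p_2,p_3,p_4$. Since $m(E_1')\ge 1-k_1$ by Corollary~\ref{cor:one_big_component}, the isoperimetric inequality yields $\pi/P(E_1')\le\sqrt{\pi/(4(1-k_1))}$. On the other hand, the pressure formula $p_1+p_2+p_3+p_4 = P(\E)/2 \le k_0/2$ (Theorem~\ref{thm:pressure_formula} and Proposition~\ref{prop:k_0}) combined with $p_j\ge k_8/\sqrt{3}$ forces
\[
\max_j p_j \le \frac{k_0}{2} - p_1 - \frac{2 k_8}{\sqrt{3}},
\]
and hence $\bar{p}$ satisfies the same bound. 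Substituting into $p_1 = \pi/P(E_1') + \bar{p}$ and solving gives
\[
p_1 \le \frac{1}{2}\left(\sqrt{\frac{\pi}{4(1-k_1)}} + \frac{k_0}{2} - \frac{2k_8}{\sqrt{3}}\right) =: c^*,
\]
with $c^*<2.37$, contradicting $p_1\ge c_*>2.49$. The main technical point is that both refinements are essential: the naive bounds $p_1\ge\pi/k_7\approx 2.21$ and $p_1\le k_8+\sqrt{\pi/(4(1-k_1))}\approx 2.56$ leave a nonempty interval, and only the combined use of $p_{\min}\ge k_8/\sqrt{3}$ on the $C_1$ side and the pressure-formula control of $\max_j p_j$ on the $E_1'$ side closes the gap.
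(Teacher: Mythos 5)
Your proposal is correct, and it follows essentially the same route as the paper: double-bubble reductions to get $p_j\ge k_8/\sqrt 3$ for $j=2,3,4$; Proposition~\ref{prop:pressure_perimeter} applied to $C_1$ (using $p_{\min}\ge k_8/\sqrt 3$ and Proposition~\ref{prop:varII} to bound the external arc of $C_1$) to get $p_1\gtrsim 2.499$; and the turning-angle identity on the internal component $E_1'$ combined with $p_1+p_2+p_3+p_4=P(\E)/2\le k_0/2$. The only difference is organizational: the paper rearranges the turning-angle estimate into $\max\{p_2,p_3,p_4\}\ge p_1-\sqrt\pi/(2\sqrt{1-k_1})$, lower-bounds $p_2+p_3+p_4\ge\max+2\min$, and concludes $P(\E)>k_0$; you instead solve the same two inequalities for $p_1$ to obtain the two-sided contradiction $c^*<p_1$ and $p_1<c^*$. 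Algebraically these are identical (both reduce to $k_0<4c_8-2\pi/P(E_1')+4k_8/\sqrt 3$), so no new idea is introduced, but your framing as explicit upper/lower bounds on $p_1$ is a touch more transparent. Two minor points: your substitution of the upper bounds $P(C_1)\le k_7$ and $\ell\le k_1k_0/(2-k_1)$ independently is a very slightly weaker bound than the paper's estimate of the ratio $\ell/P(C_1)$ via the isoperimetric inequality $P(C_1)\ge 2\sqrt\pi\sqrt{m(C_1)}$ (both give $\approx 2.499$, and the discrepancy is irrelevant); and the upper bound $p_j\le k_8$ you quote from Lemma~\ref{lemma:double_bubble_pressure} is stated but never used.
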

\begin{proof}
  By removing the triangular components we are able to reduce the
  cluster $\E$ to a double bubble $(E_2'',E_3'')$ with $E_2''\supset
  E_2$ and $E_3''\supset E_3$.
  Notice that $E_2''\subset \RR^2
  \setminus(E_0 \cup E_3)$ and $E_3''\subset \RR^2\setminus(E_0 \cup
  E_2)$ so that $m(E_2'')\le 3$ and $m(E_3'')\le 3$. So, by
  Lemma~\ref{lemma:double_bubble_pressure}, we obtain
  \[
  \min\{p_2,p_3\} \ge \frac{k_8}{\sqrt 3}.
  \]

  We repeat the same argument with $E_4$ in place of $E_3$ to obtain
  $\min\{p_2,p_4\} \ge \frac{k_8}{\sqrt 3}$ so that
  \[
  \min\{p_2,p_3,p_4\} \ge \frac{k_8}{\sqrt 3}.
  \]

  Now we estimate the length $\ell$ of the external edge of
  $C_1$ by using Proposition~\ref{prop:varII}:
  \[
  \ell \le \frac{m(C_1)}{\lvert 2-m(C_1)\rvert}\cdot P(\E)
  \]
  i.e. (notice that $m(C_1)<k_1<1$)\
  \begin{align*}
  \frac{\ell}{P(C_1)}  \le \frac{\ell}{2\sqrt{\pi}\sqrt{m(C_1)}}
   \le \frac{\sqrt{m(C_1)}P(\E)}{2\sqrt{\pi}(2-m(C_1))}
   \le \frac{\sqrt{k_1}k_0}{2\sqrt{\pi}(2-k_1)}
  \end{align*}
  and we apply Proposition~\ref{prop:pressure_perimeter} to obtain
  \begin{align*}
  p_1 & \ge \frac{(6-3)\pi}{3P(C_1)} + \min\{p_2,p_3\} \left(1-
  \frac{\ell}{P(C_1)}\right)\\
  & \ge \frac{\pi}{k_7} + \frac{k_8}{\sqrt
    3}\left(1-\frac{\sqrt{k_1}k_0}{(2-k_1)2\sqrt{\pi}}\right) \ge c_8 \defeq 2.4990.
  \end{align*}

  By Lemma~\ref{lem:turning_angle} applied to the component
  $E_1'$ we have
  \begin{align*}
  \pi = \frac{(6-3)\pi}{3} = \sum_{j=0}^4 (p_1-p_j)L_j &\ge (p_1 -
  \max\{p_0,p_2,p_3,p_4\}) P(E_1')\\
  & = (p_1 - \max\{p_2,p_3,p_4\}) 2 \sqrt{\pi}\sqrt{1-k_1}
  \end{align*}
  so that
  \[
  \max\{p_2,p_3,p_4\} \ge p_1 - \frac{\sqrt\pi}{2\sqrt{1-k_1}}
  \]
  Hence
  \begin{align*}
  P(\E) &= 2 (p_1+p_2+p_3+p_4) \ge 2 (p_1 + \max\{p_2,p_3,p_4\} + 2 \min\{ p_2,p_3,p_4\})\\
  &\ge 4 c_8  - 2\cdot\frac{\sqrt\pi}{2\sqrt{1-k_1}} + 4\cdot
  \frac{k_8}{\sqrt 3}
  \ge 11.5561 \ge k_0 %% 11.7329
  \end{align*}
  which is a contradiction.
\end{proof}

\begin{proposition}\label{excludeA}
Let $\E\in \M^*(1,1,1,1)$ be a cluster with 5 components. Then we
exclude that $\E$ has the topology $(A)$ depicted in Figure~\ref{classificazione}.
\end{proposition}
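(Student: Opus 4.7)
In topology $(A)$, each of the five bounded components has exactly four edges: the two external components $C_1,E_1'$ of $E_1$, the external regions $E_2,E_4$, and the internal region $E_3$; each of $E_2,E_3,E_4$ is adjacent to both $C_1$ and $E_1'$, while $C_1$ is not adjacent to $E_1'$ and $E_2$ is not adjacent to $E_4$. My plan is to establish strong lower bounds on the four pressures $p_1,p_2,p_3,p_4$ and then derive a contradiction with the upper bound $P(\E)\le k_0$ through the pressure formula $P(\E)=2(p_1+p_2+p_3+p_4)$ (valid because $\M^*(1,1,1,1)=\M(1,1,1,1)$ by Corollary~\ref{corollary:strong_weak}).

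First I would rule out $p_1$ and $p_3$ as the minimum of the four pressures. If $p_3$ were the minimum, every summand in the turning angle identity $\frac{2\pi}{3}=\sum_j(p_3-p_j)L_j$ for the internal $E_3$ would be non-positive, a contradiction. If $p_1$ were the minimum, then the turning angle for $C_1$ forces $\frac{2\pi}{3}\le p_1\ell_0^{C_1}$; combining with $\ell_0^{C_1}\le 2\sqrt{\pi m(C_1)}\le 2\sqrt{\pi k_1}$ from Proposition~\ref{prop:varI} and Proposition~\ref{prop:k_1} yields $p_1\ge \sqrt{\pi}/(3\sqrt{k_1})$, but then $P(\E)\ge 8\sqrt{\pi}/(3\sqrt{k_1})>k_0$ numerically. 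So, up to the symmetry $E_2\leftrightarrow E_4$, $p_2=\min\{p_1,p_2,p_3,p_4\}$, and the same turning-angle argument applied to $E_2$ (all non-external terms become non-positive) delivers $p_2\ge \sqrt{\pi}/3$.

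Next I would apply Proposition~\ref{prop:pressure_perimeter} to $C_1$ (four-edged external, with $P(C_1)\le k_7$ by Proposition~\ref{prop:k_1}), obtaining $p_1\ge \frac{2\pi}{3k_7}+p_2\bigl(1-\ell_0^{C_1}/P(C_1)\bigr)$, and control the ratio $\ell_0^{C_1}/P(C_1)$ by combining Proposition~\ref{prop:varII} with the isoperimetric lower bound on $P(C_1)$. Similarly, Proposition~\ref{prop:pressure_perimeter} applied to the internal $E_3$ gives $p_3\ge \frac{2\pi}{3P(E_3)}+p_2$, with $P(E_3)$ bounded above from $P(\E)\le k_0$ together with the isoperimetric estimates on $P(E_0),P(E_1),P(E_2),P(E_4)$. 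Feeding these four lower bounds (together with $p_4\ge p_2$) into the pressure formula is intended to violate $P(\E)\le k_0$.

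The hard part will be the final numerical estimate. Because case $(A)$ contains no triangular component, Lemma~\ref{lemma:double_bubble_pressure} (reduction to a double bubble) is not directly available, unlike in the previously excluded topologies, and each pressure bound has to be squeezed from turning-angle or variational arguments alone. If the sum above falls short of $k_0/2$, I would strengthen the estimate by also applying Proposition~\ref{prop:pressure_perimeter} to $E_1'$ and averaging with the $C_1$ bound, and by invoking Proposition~\ref{prop:pressure_length} with $(B,C)=(E_1',C_1)$ as two components of the same region $E_1$, thereby boosting the lower bound on $p_1$ (or symmetrically on $p_2$) before closing the argument.
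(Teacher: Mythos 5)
Your structural description of topology $(A)$ is correct, and your preliminary reductions are sound: ruling out $p_3$ as the minimum via the turning angle for the internal region, and ruling out $p_1$ via $\frac{2\pi}{3}\le p_1\ell_0^{C_1}\le 2p_1\sqrt{\pi k_1}$, which gives $P(\E)\ge 8\sqrt{\pi}/(3\sqrt{k_1})\approx 11.80>k_0$ — this is a legitimate (and slightly different) way to reach the same conclusion as the paper, which instead uses Proposition~\ref{prop:pressure_perimeter} on $C_1$ together with an averaging argument. The overall strategy (lower-bound all pressures, contradict $P(\E)=2\sum p_i\le k_0$) is also the paper's.

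However, the final estimate — which you correctly flag as the hard part — does not close with the tools you list, and the gap is large, not marginal. Your bound on the minimal pressure is $p_2\ge\sqrt{\pi}/3\approx 0.59$ (from $\ell_0^{E_2}\le 2\sqrt{\pi}$ via Proposition~\ref{prop:varI}); feeding this into your bounds for $p_1$ and $p_3$ gives roughly $p_1\ge 1.66$, $p_3\ge 1.07$, $p_4\ge 0.59$, hence $P(\E)\gtrsim 7.8$, far below $11.20$. The fallbacks you propose do not help: Proposition~\ref{prop:pressure_perimeter} applied to $E_1'$ gives a \emph{weaker} bound than the one from $C_1$ (since $P(E_1')\ge 2\sqrt{\pi}\sqrt{1-k_1}$ is large), and Proposition~\ref{prop:pressure_length} requires a lower bound on the external arc length of $E_1'$, which you have no means to produce. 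The two ingredients you are missing, and which the paper uses to close the argument, are: (i) the triple-bubble comparison (Lemma~\ref{lemma:ptb}) — deleting any one external region leaves a competitor in $\mathcal C^*(1,1,1)$, whence each external edge length satisfies $\ell_j\le k_0-k_{10}\approx 2.40$, much better than the isoperimetric $2\sqrt{\pi}\approx 3.54$; and (ii) the turning-angle identity for the \emph{external} region $E_0$ (four edges), $\frac{10\pi}{3}=p_1\ell_1+p_2\ell_2+p_4\ell_4\le(k_0-k_{10})(p_1+p_2+p_4)$, which yields the collective bound $p_1+p_2+p_4\ge\frac{10\pi}{3(k_0-k_{10})}\approx 4.36$. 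Combined with $p_3\ge\frac{2\pi}{3c_9}+\frac{2\pi}{3(k_0-k_{10})}\approx 1.35$, this gives $P(\E)\ge 11.41>k_0$. Without a collective bound of this type, summing your individual pressure estimates cannot reach the threshold.
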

\begin{proof}
First of all notice that
\begin{align*}
2k_0 \ge  2P(\E) &= P(E_1') + P(C_1) + P(E_2) + P(E_4) + P(E_0)
 + P(E_3)\\
   &\ge 2 \sqrt \pi \left(\sqrt{1-k_1} + \sqrt{k_2} + 2\sqrt{1} +
   \sqrt{4}\right) + P(E_3)
\end{align*}
so that
\[
P(E_3) \le 2 k_0 - 2\sqrt \pi \left(\sqrt{1-k_1} + \sqrt{k_2} +
4\right) \le c_9 \defeq 4.4111.
\]

Now let $\ell_j$ be the total length of the external edges of the region
$E_j$ ($j=1,2,4$). If we remove $E_1$ from $\E$ we obtain a 3-cluster
$\E'=(E_2,E_3,E_4)$ with $\E'\in\mathcal C^*(1,1,1)$. Hence, by
Lemma~\ref{lemma:ptb} we have $P(\E') \ge k_{10}$. Moreover
\[
  \ell_1 = P(\E) - P(\E') \le k_0 - k_{10}.
\]
We can repeat the same argument for $\ell_2$ and $\ell_4$ to obtain
\begin{equation}\label{eq:ell124}
  \max\{\ell_1,\ell_2,\ell_4\} \le k_0 - k_{10}.
\end{equation}

By Proposition~\ref{prop:pressure_perimeter} we have (notice that we
let $\ell=0$ since $E_3$ is internal)
\begin{equation}\label{eq:p124}
p_3 \ge \frac{(6-4)\pi}{3P(E_3)} + \min\{p_1,p_2,p_4\}
> \min\{p_1,p_2,p_4\}.
\end{equation}
The same proposition applied to the component $C_1$ gives
\[
p_1 \ge \frac{(6-4)\pi}{3P(C_1)} \ge \frac{2\pi}{3k_7} \ge 1.4750.
\]
Since
\begin{align*}
k_0 &\ge P(\E) = 2(p_1+p_2+p_3+p_4) \ge 2 p_1 + 6 \min\{p_2,p_3,p_4\} \\
&\ge \frac{4 \pi}{3 k_7} + 6 \min\{p_2,p_3,p_4\},
\end{align*}
we obtain
\[
\min\{p_2,p_3,p_4\} \le \frac{k_0}{6} - \frac{2\pi}{9k_7} \le 1.3744,
\]
so that
\begin{equation}\label{eq:p234}
p_1 > \min\{p_2,p_3,p_4\}.
\end{equation}

Putting together \eqref{eq:p124} and \eqref{eq:p234} we can say that
the minimum among $p_1$, $p_2$, $p_3$, $p_4$ is either $p_2$ or $p_4$. Without
loss of generality we can assume that such a minimum is $p_2$.

Hence, applying Lemma~\ref{lem:turning_angle} to the region
$E_2$ we obtain
\[
\frac{(6-4)\pi}{3} = \sum_{i=0}^4 (p_2-p_i) L_i \le p_2 \ell_2
\]
where $L_i$ is the total length of the edges between $E_2$ and $E_i$
(so that $L_0=\ell_2$) and we used the estimate $p_2-p_i\le 0$ for
$i\neq 0$.
So, using~\eqref{eq:ell124}
\[
\min\{p_1,p_2,p_3,p_4\} = p_2 \ge \frac{2\pi}{3\ell_2} \ge \frac{2\pi}{3(k_0-k_{10})}.
\]

Now, use again Proposition~\ref{prop:pressure_perimeter} on the region $E_3$
to obtain
\begin{equation}\label{eq:p3bis}
 p_3 \ge \frac{(6-4)\pi}{3P(E_3)} + \min\{p_1,p_2,p_4\}
\ge \frac{2\pi}{3 c_9} + \frac{2\pi}{3(k_0-k_{10})}
\ge c_{10} \defeq 1.3466.
\end{equation}

Finally we apply Lemma~\ref{lem:turning_angle} to the region
$E_0$ to obtain
\[
  \frac{(6+4)\pi}{3} = p_1 \ell_1 + p_2 \ell_2 + p_4 \ell_4
  \le \max\{\ell_1,\ell_2,\ell_4\} (p_1+p_2+p_4)
\]
hence, using also~\eqref{eq:ell124}
\[
p_1 + p_2 + p_4 \ge \frac{10\pi}{3(k_0-k_{10})}.
\]
So, using also~\eqref{eq:p3bis}, we have
\[
 P(\E) = 2 p_3 + 2(p_1+p_2+p_4) \ge 2 c_{10}+
 \frac{20\pi}{3(k_0-k_{10})}
 \ge 11.4116 > k_0
\]
which is a contradiction.
\end{proof}

%% \begin{proposition}
%%   Let $\E\in \M^*(1,1,1,1)$ has 5 components. Then $\E$ cannot have
%%   the topologies $G$ and $H$ of Figure~\ref{classificazione}.
%% \end{proposition}

%% \begin{proof}
%% In both cases $G$ and $H$ the connected region $E_2$ has six edges
%%  which contradicts the Proposition~\ref{prop:edges}.
%% \end{proof}

\begin{theorem}\label{thm:connected}
Let $\E\in \M(1,1,1,1)$. Then $\E$ is connected.
\end{theorem}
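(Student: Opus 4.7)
The plan is to assemble the structural results established so far to rule out every nontrivial number of connected components. By Corollary~\ref{corollary:strong_weak} we have $\M(1,1,1,1)=\M^*(1,1,1,1)$, so it suffices to work with a weak minimizer $\E\in\M^*(1,1,1,1)$. By Corollary~\ref{cor:two_small} the total number $M$ of bounded connected components satisfies $M\le 6$, with at most two of them being small (each of the four regions contributes exactly one big component). Disconnectedness is equivalent to $M\ge 5$, so the proof reduces to excluding the two cases $M=5$ and $M=6$.

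First I would treat $M=6$. In this situation there are exactly two small components, and by Corollary~\ref{cor:not_three} no single region may contain three components, so the two small components must belong to two distinct regions. This is precisely the configuration ruled out by Proposition~\ref{prop:not_two_two}, giving an immediate contradiction.

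Next I would treat $M=5$. Then there is exactly one disconnected region (say $E_1$) with one small component $C_1$ and one big component $E_1'$, while $E_2,E_3,E_4$ are connected. By Proposition~\ref{classify_five_components} the topology of $\E$ must coincide, up to relabeling of $E_2,E_3,E_4$, with one of the nine diagrams $(A),(B),(C),(C'),(D),(D'),(E),(F),(F')$ of Figure~\ref{classificazione}. Propositions~\ref{BCCDDEF},~\ref{prop:F'}, and~\ref{excludeA} exclude, respectively, the seven topologies $(B),(C),(C'),(D),(D'),(E),(F)$, the topology $(F')$, and the topology $(A)$. Hence no such five-component minimizer exists.

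Having excluded $M\in\{5,6\}$ and knowing $M\le 6$, we must have $M=4$. Since each region carries its own big component and there are four regions, each region consists of a single connected component; that is, $\E$ is connected. The substantive work has already been carried out in Sections~\ref{sec:six} and~\ref{sec:four_five_placeholder_not_needed}; the present theorem is merely the packaging step, and the only point that needs a brief verbal justification is the observation that with $M=6$ the two small components cannot share a region, which is handled by combining Corollary~\ref{cor:not_three} with Corollary~\ref{cor:one_big_component}.
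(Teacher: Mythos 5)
Your proposal is correct and follows essentially the same route as the paper: reduce to weak minimizers via Corollary~\ref{corollary:strong_weak}, bound the number of components using Corollary~\ref{cor:one_big_component} and Proposition~\ref{prop:two_small}, eliminate the two-small-component case ($M=6$) with Corollary~\ref{cor:not_three} and Proposition~\ref{prop:not_two_two}, and eliminate the one-small-component case ($M=5$) by exhausting the topologies from Proposition~\ref{classify_five_components} via Propositions~\ref{BCCDDEF}, \ref{prop:F'}, and \ref{excludeA}. The only blemish is the stray reference to a nonexistent label (\emph{sec:four\_five\_placeholder\_not\_needed}) in your closing remark, which should be removed or replaced with references to Sections~\ref{sec:six} and~\ref{sec:five}.
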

\begin{proof}
 By Corollary~\ref{corollary:strong_weak} we know that $\E\in\M^*(1,1,1,1)$.

 By Corollary~\ref{cor:one_big_component} and by Proposition~\ref{prop:two_small}
 we know that each region
 $E_i$ has exactly one big component and the total number of
 small components is not larger than two.
 %By
 %Proposition~\ref{prop:two_small} we know that the total number of
 %small components is not larger than two.

 If the cluster has exactly
 two small components, with
 Corollary~\ref{cor:not_three}, we exclude that they belong to the same
 region and with Proposition~\ref{prop:not_two_two} we
 exclude that they belong to two different regions.

 Finally, from Proposition~\ref{classify_five_components}
 and Propositions~\ref{BCCDDEF}, \ref{prop:F'} and~\ref{excludeA}
 we exclude that the cluster has exactly one small connected component
 (which means five connected components in total).
\end{proof}

\section{Connected clusters (four components)}\label{sec:four}

\begin{proposition}\label{prop:two_topologies}
  Let $\E\in \M^*(1,1,1,1)$ be a connected cluster.
  Then $\E$ has one of the two topologies depicted in Figure~\ref{sandwich_flower}.
\end{proposition}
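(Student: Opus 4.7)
The strategy is purely combinatorial: use Proposition~\ref{prop:edges} to control the number of edges at each region and enumerate the possibilities by a handshake argument.

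Since $\E$ is connected with four regions, $M=4$, so by Proposition~\ref{prop:edges} the boundary $\partial\E$ consists of exactly $3(M-1)=9$ edges meeting at $2(M-1)=6$ vertices; each bounded region has at least $3$ edges and at most $M+1-k=4$ edges if external, or at most $M-k=3$ edges if internal (here $k=1$). Since every edge belongs to exactly two regions, the handshake identity gives
\[
\sum_{i=0}^{4} n_i = 2\cdot 9 = 18,
\]
where $n_i$ denotes the number of edges of $E_i$. By Proposition~\ref{prop:edges} any two regions share at most one edge, so $n_0$ equals the number of \emph{external} regions (i.e.\ regions adjacent to $E_0$); in particular $n_0\le 4$. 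By Corollary~\ref{cor:one_big_internal}, since every region is itself a big component, at most one region among $E_1,\dots,E_4$ is internal.

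First I would treat the case where all four regions are external. Then $n_0=4$ and $\sum_{i=1}^4 n_i=14$; since each $n_i\in\{3,4\}$ the only possibility (up to relabeling) is $n_1=n_2=4$ and $n_3=n_4=3$. A region with $n_i=4$ has one edge on $E_0$ and three interior edges, so it must share exactly one edge with each of the other three regions; hence $E_1$ and $E_2$ are adjacent to all of $E_0,E_3,E_4$ and to each other. A region with $n_i=3$ has one edge on $E_0$ and two interior edges: since $E_3$ is already forced to be adjacent to both $E_1$ and $E_2$, there is no remaining edge available from $E_3$ to $E_4$, so $E_3$ and $E_4$ are not adjacent. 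The same conclusion applies to $E_4$. This adjacency pattern, together with the planar structure and the fact that $E_3,E_4$ sit on opposite sides of the $E_1$--$E_2$ interface (forced because both are adjacent to $E_0$ but not to each other), is exactly the sandwich topology of Figure~\ref{sandwich_flower} (right).

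Next I would treat the case of exactly one internal region; say $E_4$ is internal. Then $n_4=3$, while $E_0$ is adjacent only to $E_1,E_2,E_3$, so $n_0=3$. Consequently $\sum_{i=1}^{3} n_i = 18-3-3 =12$ with each $n_i\in\{3,4\}$, forcing $n_1=n_2=n_3=4$. Each external region must then share exactly one edge with each of $E_0,E_4$ and with both other external regions, which (since $E_4$ is surrounded by $E_1,E_2,E_3$) is precisely the flower topology of Figure~\ref{sandwich_flower} (left). The step I expect to require slightly more care is verifying that these combinatorial adjacency patterns admit a \emph{unique} planar realization up to homeomorphism; this follows because the dual graph together with the cyclic order of edges around each trivalent vertex (forced by the $120^\circ$ condition of Definition~\ref{def:stationary}) has a unique planar embedding in each case, but it should be spelled out explicitly.
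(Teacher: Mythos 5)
Your proposal is correct and follows essentially the same route as the paper: both arguments use the edge/vertex counts from Proposition~\ref{prop:edges} together with the handshake identity $\sum_i n_i = 18$ to force three four-sided and two three-sided regions (counting $E_0$), and then split into the cases $n_0=3$ (flower) and $n_0=4$ (sandwich) via the adjacency constraint that two regions share at most one edge. Your closing remark about verifying uniqueness of the planar realization is a point the paper leaves implicit, but it does not change the substance of the argument.
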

\begin{proof}
  Since every region is connected, by Proposition~\ref{prop:edges}
  every region (comprising $E_0$) has three or four edges and the
  cluster has a total of nine edges and six vertices. Let $x$ be the number of regions
  (bounded or unbounded) with four edges and let $y$ the number of
  regions (bounded or unbounded) with three
  edges. We have one unbounded region $E_0$ and four bounded regions,
  hence: $x+y=5$. Moreover summing up all the edges of all the regions
  we would count each edge twice, hence we have: $4x+3y=18$. Solving
  the system of two equations gives $x=3$, $y=2$ hence we have three
  regions with four edges and two regions with four edges.

  If the unbounded region $E_0$ has three edges (note that there is a total of six vertices), there is one internal
  region and three external regions. The internal region can only have
  three edges (because it is not adjacent to $E_0$) and we are in the
  first case of the statement.

  If the unbounded region $E_0$ has four edges, all the bounded
  regions are external: two of them have three edges and two have four
  edges. The regions with four edges are adjacent to all other
  regions hence the regions with three edges don't touch each
  other. We are in the second case of the statement.
\end{proof}

\begin{proposition}\label{prop:no_flower}
Let $\E \in \M^*(1,1,1,1)$ be a connected cluster. Then $\E$ has not
the flower topology.
\end{proposition}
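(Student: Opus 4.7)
The plan is to derive pressure bounds strong enough to force $P(\E) = 2(p_1+p_2+p_3+p_4) > k_0$, contradicting the minimality of $\E$ from Proposition~\ref{prop:k_0}. In the flower topology, $E_1$ is internal with three edges and $E_0$ is external with three edges (by the same edge-count argument used in Proposition~\ref{prop:two_topologies}), while $E_2, E_3, E_4$ each have four edges.

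First I would control the outer pressures via the double-bubble reduction. Because $E_1$ is an internal triangular component, Theorem~\ref{thm:triangle_remove} reduces $\E$ to a stationary triple bubble $\E' = (E_2', E_3', E_4')$ with $m(E_i') \ge 1$ and $\sum_{i=2}^4 m(E_i') = 4$; each $E_i'$ is again triangular, so a further removal yields a double bubble $(E_j'', E_k'')$ with both measures in $[1,3]$, and Lemma~\ref{lemma:double_bubble_pressure} then gives $\min(p_j, p_k) \ge k_8/\sqrt{3}$. Running this for every pair $j, k \in \{2,3,4\}$ yields $p_j \ge k_8/\sqrt{3}$ for $j = 2, 3, 4$. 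Applied to the internal triangular $E_1$ (with $\ell = 0$), Proposition~\ref{prop:pressure_perimeter} then gives $p_1 \ge \pi/P(E_1) + k_8/\sqrt{3}$.

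To exploit this I need an upper bound on $P(E_1)$; I would obtain two such bounds. The identity $2P(\E) = \sum_k P(E_k)$ combined with $P(E_0) \ge 4\sqrt{\pi}$, $P(E_i) \ge 2\sqrt{\pi}$ (isoperimetric inequality) gives $P(E_1) \le 2k_0 - 10\sqrt{\pi}$. More sharply, for each $j \in \{2,3,4\}$, absorbing $E_1$ into $E_j$ produces a cluster in $\mathcal{C}^*(1,1,1)$ of perimeter $P(\E) - L_{1,j}$, which by Lemma~\ref{lemma:ptb} is $\ge k_{10}$, so $L_{1,j} \le k_0 - k_{10}$ and $P(E_1) \le 3(k_0 - k_{10})$. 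A parallel extraction from Lemma~\ref{lem:turning_angle} applied to the triangular $E_0$, namely $3\pi = \sum_j p_j L_{j,0}$ together with Proposition~\ref{prop:varI}'s bound $L_{j,0} \le 2\sqrt{\pi}$, constrains the largest of $p_2, p_3, p_4$ from below, and the pressure formula applied to $\E'$ (with $P(\E') \ge k_{10}$ and $\sum m(E_i') = 4$) refines the area-weighted average of the outer pressures.

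Combining all these estimates through $P(\E) = 2(p_1 + p_2 + p_3 + p_4)$ should yield $P(\E) > k_0$. The main obstacle is numerical: individually the crude bounds $p_j \ge k_8/\sqrt{3}$ and $p_1 \ge \pi/P(E_1) + k_8/\sqrt{3}$ together with $P(E_1) \le 3(k_0 - k_{10})$ yield only $P(\E) \gtrsim 8$, noticeably short of $k_0 \approx 11.2$; closing the gap requires a delicate joint use of the two turning-angle identities (for $E_1$ and for $E_0$), of the pressure formula both for $\E$ and for the reduced triple bubble $\E'$, and possibly a case split on whether $p_1$ is above or below $\max(p_2, p_3, p_4)$, analogous to the step-by-step argument used in Proposition~\ref{excludeA}.
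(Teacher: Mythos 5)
Your skeleton matches the paper's: bound $p_2,p_3,p_4\ge k_8/\sqrt 3$ by double-bubble reduction, bound $p_1$ from below via Proposition~\ref{prop:pressure_perimeter} applied to the internal triangle $E_1$, use the turning-angle identity for $E_0$, and conclude $P(\E)=2\sum p_i>k_0$. But as you yourself observe, the estimates you actually assemble only give $P(\E)\gtrsim 8$--$9$, and the hope that "a delicate joint use" of the same identities will close a gap of over $2$ units is not a proof. The margin in the end is razor-thin ($11.2124$ versus $k_0=11.1962$), so the two places where your bounds are slack are not minor losses — they are exactly where the argument lives.

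Concretely, two sharper estimates are needed. First, since $E_1$ is internal, $\partial E_0$ and $\partial E_1$ are disjoint subsets of $\partial\E$, so $P(\E)\ge P(E_0)+P(E_1)\ge 4\sqrt\pi+P(E_1)$, giving $P(E_1)\le k_0-4\sqrt\pi\le 4.1064$; your bound $2k_0-10\sqrt\pi\approx 4.67$ loses a factor because it pays for $P(E_1)$ only once inside $2P(\E)$. Second, and decisively: you apply the triple-bubble bound $k_{10}$ by absorbing $E_1$ into $E_j$, which controls the \emph{internal} lengths $L_{1,j}$ — but what the $E_0$ turning-angle identity needs is control of the \emph{external} lengths $\ell_j=L_{j,0}$. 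The right move is to delete $E_j$ entirely (hand it to $E_0$): the resulting $3$-cluster lies in $\mathcal C^*(1,1,1)$ and has perimeter $P(\E)-\ell_j\ge k_{10}$, so $\ell_j\le k_0-k_{10}\approx 2.40$, which is much better than the $2\sqrt\pi\approx 3.54$ you get from Proposition~\ref{prop:varI}. Feeding this into $3\pi=\sum_j p_j\ell_j\le\max_j\ell_j\cdot(p_2+p_3+p_4)$ yields $p_2+p_3+p_4\ge 3\pi/(k_0-k_{10})\approx 3.923$ rather than your $3\sqrt\pi/2\approx 2.66$, and together with $p_1\ge\pi/4.1064+k_8/\sqrt3\ge 1.6829$ one gets $P(\E)\ge 11.2124>k_0$. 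Without these two specific improvements the contradiction does not materialize, so the proposal as written has a genuine gap.
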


\begin{proof}
Suppose by contradiction that $\E$ has the flower topology and
let $E_1$ be the internal three sided region.

 %%%%%%%%%%%%%%%%%%%%%%%%%%%%%%%%%%%%%%%%%%%%%%%%%%%%%%%%%%
First of all notice that
\begin{align*}
  k_0 \ge  P(\E) \geq P(E_0)+ P(E_1)
  \ge 2 \sqrt \pi \sqrt{4} + P(E_1)
\end{align*}
so that
\[
P(E_1) \le k_0 - 4\sqrt \pi  \le c_{11} \defeq 4.1064.
\]

Now let $\ell_2$ be the length of the external edge of the region
$E_2$. If we remove $E_2$ from $\E$ we obtain a 3-cluster
$\E'=(E_1,E_3,E_4)$ with $\E'\in\mathcal C^*(1,1,1)$. Hence, by
Lemma~\ref{lemma:ptb} we have $P(\E') \ge k_{10}$. Moreover
\[
  \ell_2 = P(\E) - P(\E') \le k_0 - k_{10}.
\]
We can repeat the same argument for the lengths $\ell_3$ and $\ell_4$
of the external edges of $E_3$ and $E_4$, to obtain
\begin{equation}\label{eq:ell234}
  \max\{\ell_2,\ell_3,\ell_4\} \le k_0 - k_{10}.
\end{equation}

%%%%%%%%%%%%%%%%%%%%%%%%%%%%%%%%%%%%%%%%%%%%%%%%%%%%%%%%%%%

%%%%%%%%%%%%%%%%%%%%%%%%%%%%%%%%%%%%%%%%%%%%%%%%%%%%%%%
By removing the triangular components we are able to reduce the
  cluster $\E$ to a double bubble $(E_2'',E_3'')$ with $E_2''\supset
  E_2$ and $E_3''\supset E_3$.
  Notice that $E_2''\subset \RR^2
  \setminus(E_0 \cup E_3)$ and $E_3''\subset \RR^2\setminus(E_0 \cup
  E_2)$ so that $m(E_2'')\le 3$ and $m(E_3'')\le 3$. So, by
  Lemma~\ref{lemma:double_bubble_pressure}, we obtain
  \[
  \min\{p_2,p_3\} \ge \frac{k_8}{\sqrt 3}.
  \]

  We repeat the same argument with $E_4$ in place of $E_3$ to obtain
  $\min\{p_2,p_4\} \ge \frac{k_8}{\sqrt 3}$ so that
  \[
  \min\{p_2,p_3,p_4\} \ge \frac{k_8}{\sqrt 3}.
  \]

%%%%%%%%%%%%%%%%%%%%%%%%%%%%%%%%%%%%%%%%%%%%%%%%%%%%%%%

%%%%%%%%%%%%%%%%%%%%%%%%%%%%%%%%%%%%%%%%%%%%%%%%%%%%%%%%%%%
Now, use again Proposition~\ref{prop:pressure_perimeter} on the region $E_1$
to obtain (notice that we let $\ell =0$ since $E_1$ is internal)
\begin{equation}\label{eq:p1}
 p_1 \ge \frac{(6-3)\pi}{3P(E_1)} + \min\{p_2,p_3,p_4\}
\ge \frac{\pi}{c_{11}} + \frac{k_8}{\sqrt 3}
\ge c_{12} \defeq 1.6829.
\end{equation}

Finally we apply Lemma~\ref{lem:turning_angle} to the region
$E_0$ to obtain
\[
  \frac{(6+3)\pi}{3} = p_2 \ell_2 + p_3 \ell_3 + p_4 \ell_4
  \le \max\{\ell_1,\ell_2,\ell_4\}\cdot (p_2+p_3+p_4)
\]
hence, using also~\eqref{eq:ell234}
\[
p_2 + p_3 + p_4 \ge \frac{3\pi}{k_0-k_{10}}.
\]
So, using also~\eqref{eq:p1}, we have
\[
 P(\E) = 2 p_1 + 2(p_2+p_3+p_4) \ge 2 c_{12} +
 \frac{6\pi}{k_0-k_{10}}
 \ge 11.2124 > k_0
\]
which is a contradiction.
            \end{proof}

\begin{theorem}\label{thm:sandwich}
  Let $\E\in \M(1,1,1,1)$. Then $\E$ has the \emph{sandwich} topology
  as in Figure~\ref{sandwich_flower}.
\end{theorem}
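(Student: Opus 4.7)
The plan is to conclude the theorem as a direct synthesis of three results already established in the paper, with essentially no additional work required beyond invoking them in the right order.

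First, I would apply Theorem~\ref{thm:connected} to the given minimizer $\E \in \M(1,1,1,1)$. That result tells us that every region $E_i$ (for $i=1,\dots,4$) is connected, so $\E$ is a connected $4$-cluster. Having disposed of the disconnected case, the combinatorial structure of $\E$ is now highly constrained.

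Next, I would invoke Proposition~\ref{prop:two_topologies}, which classifies the topology of any connected cluster in $\M^*(1,1,1,1)$: there are exactly two possibilities, namely the \emph{flower} and the \emph{sandwich} configurations shown in Figure~\ref{sandwich_flower}. Note that by Corollary~\ref{corollary:strong_weak} we have $\M(1,1,1,1) = \M^*(1,1,1,1)$, so this proposition applies directly to our $\E$.

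Finally, I would apply Proposition~\ref{prop:no_flower}, which rules out the flower topology for any connected cluster in $\M^*(1,1,1,1)$. The only remaining possibility in the dichotomy from Proposition~\ref{prop:two_topologies} is the sandwich topology, which is exactly the conclusion of the theorem. The real mathematical content (the pressure estimates, the reduction-to-double-bubble arguments, and the exclusion of the many disconnected configurations) has already been absorbed into the cited results, so no obstacle arises here beyond ensuring that each quoted statement applies in the present setting -- which it does, once one passes from $\M(1,1,1,1)$ to $\M^*(1,1,1,1)$ via Corollary~\ref{corollary:strong_weak}.
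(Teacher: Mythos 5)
Your proposal is correct and is essentially identical to the paper's own proof: it invokes Theorem~\ref{thm:connected}, then Proposition~\ref{prop:two_topologies}, then Proposition~\ref{prop:no_flower} in the same order. Your explicit remark that Corollary~\ref{corollary:strong_weak} justifies passing from $\M(1,1,1,1)$ to $\M^*(1,1,1,1)$ is a small but accurate addition that the paper leaves implicit.
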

\begin{proof}
  By Theorem~\ref{thm:connected} we know that $\E$ is connected so by
  Proposition~\ref{prop:two_topologies} we know that $\E$ can either
  have the flower or the sandwich topology. With
  Proposition~\ref{prop:no_flower} we exclude the flower topology
  and the result follows.
\end{proof}

\begin{conjecture}\label{conj:sandwich}
  Up to isometries there is a unique stationary cluster $\E\in \mathcal C(1,1,1,1)$ with the
  \emph{sandwich} topology. In such a cluster the regions $E_1$ and
  $E_3$ are isometric to $E_2$ and $E_4$ (respectively).
\end{conjecture}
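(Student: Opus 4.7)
The plan is to split the conjecture into two sub-claims --- uniqueness of the stationary sandwich cluster with areas $(1,1,1,1)$ modulo ambient isometry, and the congruences $E_1\cong E_2$, $E_3\cong E_4$ --- and to derive the congruences from the uniqueness together with the combinatorial symmetries of the sandwich topology.

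For uniqueness, I would parameterize the space $\S$ of stationary sandwich clusters (modulo isometries) by the four pressures $(p_1,p_2,p_3,p_4)$, recalling that $p_0=0$ by convention. By Definition~\ref{def:stationary}, these pressures determine the signed curvatures of all nine edges as differences $p_i-p_j$; combined with the $120$-degree condition at each of the six vertices and the closure of the four bounded faces, this yields a finite transcendental system in the edge lengths. Lemma~\ref{lem:turning_angle}, applied to each of the five regions, gives five relations linking lengths to pressures, and the pressure formula from Theorem~\ref{thm:pressure_formula} provides one further constraint. A first-variation computation, in the spirit of Section~\ref{sec:variations}, should then show that the area map $\Phi:\S\to\RR_+^4$ with $\Phi(\E)=(m(E_1),\dots,m(E_4))$ is smooth with invertible Jacobian at every interior point, so that $\Phi$ is a local diffeomorphism. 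Global injectivity should follow from a continuation argument: starting from a symmetric base-point on the diagonal ray $\boldsymbol a=(t,t,t,t)$, follow a putative second preimage of $(1,1,1,1)$ back along the ray to a small value of $t$, where the analysis in Sections~\ref{sec:1111}--\ref{sec:four} already forces the solution to be unique; obstructions arise only at boundary points of $\S$ (pressures colliding, pressures tending to $0$, or an edge length degenerating), all of which should be excluded by topological arguments.

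Once uniqueness is established, the symmetry is formal. The sandwich topology admits a Klein four-group of combinatorial automorphisms generated by $\sigma_{12}:E_1\leftrightarrow E_2$ (fixing $E_3,E_4$) and $\sigma_{34}:E_3\leftrightarrow E_4$ (fixing $E_1,E_2$). Given the unique stationary sandwich cluster $\E$ with $\boldsymbol m(\E)=(1,1,1,1)$, the relabeled cluster $\sigma_{12}(\E)$ is again a stationary sandwich cluster with the same area vector, so by uniqueness there exists an ambient isometry $\phi$ of $\RR^2$ realizing the relabeling. In particular $\phi$ maps $E_1$ onto $E_2$, so $E_1\cong E_2$; applying the same argument with $\sigma_{34}$ in place of $\sigma_{12}$ gives $E_3\cong E_4$.

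The hard part will be the global injectivity of $\Phi$. The local step is a standard implicit function theorem calculation once the turning-angle and pressure equations are written out explicitly, but globalizing it requires either a direct monotonicity in the style of Theorem~\ref{thm:double_bubble_monotone} --- for which no obvious geometric comparison is available in the sandwich case --- or a delicate topological continuation that rules out bifurcations as one deforms the area vector. A natural simplification is to use the Klein four-group action on $\S$ to first restrict to the symmetric stratum $\{p_1=p_2,\ p_3=p_4\}$, a two-parameter family on which an explicit planar analysis (essentially a symmetric generalization of the double bubble) should settle existence and uniqueness, and then rule out asymmetric stationary competitors by a bifurcation or Morse-theoretic argument along the symmetric family. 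This last step is likely the genuine obstruction, since deciding whether asymmetric stationary competitors coexist at equal areas is essentially the content of the conjecture.
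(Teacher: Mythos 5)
The statement you set out to prove is explicitly labelled a \emph{conjecture} in the paper: the authors do not prove it, and they say so. After the conjecture they only sketch a construction and a heuristic. Their approach is to parameterize stationary sandwich clusters by the four external radii $r_1,r_2,r_3,r_4$ (equivalently $1/p_i$ for the external regions), obtain them by taking the double bubble with radii $r_1,r_2$ and ``growing'' a triangular region at each of the two vertices until it reaches the prescribed $r_3$ or $r_4$, then use the known monotonicity of that growth to balance $r_3=r_4$ against the given $r_1, r_2$. For $r_1=r_2$ this yields a symmetric solution in $\mathcal C(1,1,1,1)$; they then \emph{conjecture} that $r_1>r_2$ forces $m(E_1)>m(E_2)$ so that no asymmetric stationary competitor can have equal areas, and they point to numerical evidence on a public repository.

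Your proposal is a genuinely different route from theirs: you parameterize by pressures, set up the turning-angle and pressure-formula constraints, try to show the area map is a local diffeomorphism, and then globalize by continuation along the symmetric ray. The paper's heuristic instead tries to exhibit a direct geometric monotonicity in the radii, in the spirit of Theorem~\ref{thm:double_bubble_monotone}, avoiding the implicit-function machinery. Your observation that the two stated congruences $E_1\cong E_2$ and $E_3\cong E_4$ would follow \emph{formally} from uniqueness, via the Klein four-group of combinatorial automorphisms of the sandwich graph (reflection swapping $E_1\leftrightarrow E_2$, reflection swapping $E_3\leftrightarrow E_4$), is clean and correct, and it cleanly separates the conjecture into ``uniqueness'' plus a soft symmetry argument; the paper does not make this reduction explicit.

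That said, both approaches stall at the same place, and you are right to flag it. In your framework the missing piece is global injectivity of $\Phi$ --- ruling out bifurcations or a disconnected fibre over $(1,1,1,1)$ --- and the continuation argument as written is not a proof: you would need to control degenerations at the boundary of the stratum (pressures colliding, an edge shrinking to a point, the cluster escaping to a different topology) and to exclude asymmetric branches emanating from the symmetric family, none of which is carried out. In the paper's framework the missing piece is the asserted monotonicity $r_1>r_2 \Rightarrow m(E_1)>m(E_2)$ after balancing $r_3=r_4$, which is not proved either. So the proposal is a plausible and somewhat more systematic blueprint than the paper's heuristic, but it does not close the conjecture, and you correctly identify that the remaining step --- ruling out asymmetric stationary competitors at equal areas --- is exactly the content of the open problem.
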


  A stationary sandwich cluster with given external radii $r_1, r_2, r_3, r_4$
  can be uniquely constructed by taking the double bubble with
  external radii $r_1, r_2$ and then ``growing'' a triangular region
  in each vertex so that it reaches the prescribed external radius
  In fact both the area and the radii of a growing triangular camera
  are strictly increasing, see \cite{MM, W}
%  (notice however that
%  $r_3$ and $r_4$ cannot be too large,
%  otherwise the triangular regions will touch each other).
  
  Not only that, since when the triangular region grows, the area of
  the quadrangular regions decreases, if $r_1$ and $r_2$ are fixed it
  is possible to find $r_3=r_4$ so that the area of the triangular
  regions become equal to the area of the smaller quadrangular region.
  If $r_1=r_2$ we then find that all four regions have equal area and
  with a rescaling we obtain a stationary symmetric
  sandwich cluster in $\mathcal C(1,1,1,1)$.

  We believe that when $r_1>r_2$ then the previous construction
  would yield a cluster where
  $m(E_1) > m(E_2) = m(E_3) = m(E_4)$. This would exclude that asymmetric sandwich cluster can
  have all equal areas.

  We have some numerical computations \cite{github} which
  confirm this.

\end{document}